\newtheorem{thm}{Theorem} [section]
\newtheorem{lem}[thm]{Lemma}
\newtheorem{cor}[thm]{Corollary}
\newtheorem{prop}[thm]{Proposition}
\newtheorem{rem}[thm]{Remark}
\numberwithin{equation}{section}
\newcommand{\A}{\mathcal{A}}
\newcommand{\mA}{\mathbf{A}}
\newcommand{\mB}{\mathbf{B}}
\newcommand{\sB}{\mathscr{B}}
\newcommand{\co}{\mathrm{col}}
\newcommand{\mC}{\mathbf{C}}
\newcommand{\D}{\mathcal{D}}
\newcommand{\diag}{\mathrm{diag}}
\newcommand{\ve}{\varepsilon}
\newcommand{\End}{\mathrm{End}}
\newcommand{\F}{\mathbb{F}}
\newcommand{\ff}{\mathtt{f}}
\newcommand{\sF}{\mathscr{F}}
\newcommand{\Tfg}{\T_{\ff\g}}
\newcommand{\g}{\mathtt{g}}
\newcommand{\HH}{\mathcal{H}}
\newcommand{\Hom}{\mathrm{Hom}}
\newcommand{\K}{\mathbb{K}}
\newcommand{\Ker}{\mathrm{Ker}}
\newcommand{\mi}{\mathbf{i}}
\newcommand{\Par}{\mathrm{Par}}
\newcommand{\N}{\mathbb{N}}
\newcommand{\Ob}{\mathscr{O}}
\newcommand{\ro}{\mathrm{row}}
\newcommand{\qU}{\mathbf{U}}
\newcommand{\q}{\mathbf{q}}
\newcommand{\Sc}{\mathcal{S}}
\newcommand{\T}{\mathbb{T}}
\newcommand{\Z}{\mathbb{Z}}
\newcommand{\half}{\frac{1}{2}}
\title[Geometric Howe dualities]
{Geometric Howe dualities of finite type}
\author[Li Luo]{Li Luo}
\author[Zheming Xu]{Zheming Xu}
\address{School of mathematical Sciences,
Shanghai Key Laboratory of Pure Mathematics and Mathematical Practice,
    East China Normal University, Shanghai 200241, China}
\email{lluo@math.ecnu.edu.cn (Luo), 51195500019@stu.ecnu.edu.cn (Xu)}
\begin{document}

\begin{abstract}
We develop a geometric approach toward an interplay between a pair of quantum Schur
algebras of arbitrary finite type. Then by Beilinson-Lusztig-MacPherson's stabilization
procedure in the setting of partial flag varieties of type A (resp. type B/C), the Howe
duality between a pair of quantum general linear groups (resp. a pair of
$\imath$quantum groups of type AIII/IV) is established. The Howe duality for quantum
general linear groups has been provided via quantum coordinate algebras in \cite{Z02}.
We also generalize this algebraic approach to $\imath$quantum groups of type AIII/IV,
and prove that the quantum Howe duality derived from partial flag varieties coincides
with the one constructed by quantum coordinate (co)algebras. Moreover, the explicit
multiplicity-free decompositions for these Howe dualities are obtained.
\end{abstract}

\maketitle
\setcounter{tocdepth}{1}
\tableofcontents

\section{Introduction}
\subsection{}
The classical Howe duality, which involves commuting actions of a pair of Lie
groups/algebras, provides a representation theoretical treatment for classical invariant
theory \cite{Ho89}. For the general linear Lie groups/algebras, Schur duality, Howe
duality, and the first fundamental theorem (FFT) are equivalent.

Nowadays, there have been a number of quantum versions of Howe dualities. The first one
was achieved by Quesne \cite{Q92} on the duality between quantum groups
$U_q(\mathfrak{su}_3)$ and $U_q(\mathfrak{u}_2)$. Noumi, Umeda and Wakayama
\cite{NUW95,NUW96} obtained quantum analogues of the dual pairs
$(\mathfrak{sl}_2,\mathfrak{so}_n)$ and $(\mathfrak{sp}_2,\mathfrak{so}_n)$, where not
the usual Drinfeld-Jimbo quantum group $U_q(\mathfrak{so}_n)$ but another different
$q$-deformation $U_q'(\mathfrak{so}_n)$ of the universal enveloping algebra
$U(\mathfrak{so}_n)$ was involved. The Howe duality for a pair of quantum general linear
groups $U_q(\mathfrak{gl}_m)$ and $U_q(\mathfrak{gl}_n)$ was given by Zhang in
\cite{Z02}, where quantum coordinate algebras were employed to construct a
non-commutative analogue of the symmetric algebras on which $U_q(\mathfrak{gl}_m)$ and
$U_q(\mathfrak{gl}_n)$ act. This construction was further applied to established the Howe
duality of $(U_q(\mathfrak{gl}_n), U_q(\mathfrak{so}_{2n}))$, $(U_q(\mathfrak{gl}_n),
U_q(\mathfrak{so}_{2n+1}))$ and $(U_q(\mathfrak{gl}_n), U_q(\mathfrak{sp}_{2n}))$ in
\cite{LZ03} (see also \cite{WZ09,CW20} for quantum supergroups). It also helps to provide
a non-commutative version of the FFT for associated  quantum groups (cf.
\cite{LZZ11,Zh20}). Another special quantum Howe duality construction for type A can be
found in \cite{FKZ19}.

\subsection{}
As we saw in the aforementioned papers \cite{NUW95,NUW96}, a nonstandard $q$-deformation
$U_q'(\mathfrak{so}_n)$ occurs. This is no isolated instance. In \cite{ES18,ST19},
nonstandard $q$-deformations are used to set up other quantum Howe dualities, too. Actually, all
of them are examples of another family of $q$-deformations of the universal enveloping
algebras of complex Lie algebras, called $\imath$quantum groups. An $\imath$quantum
group $\mathbf{U}^\imath$ is a coideal subalgebra of a quantum group
$\mathbf{U}=U_q(\mathfrak{g})$ of a simple complex Lie algebra $\mathfrak{g}$ such that
$(\mathbf{U},\mathbf{U}^\imath)$ forms a quantum symmetric pair, which was introduced by
Letzter \cite{Le99}. The classification of quantum symmetric pairs (and hence of
$\imath$quantum groups) can be described by Satake diagrams.

In their remarkable work \cite{BW18}, Bao and Wang used the $\imath$quantum groups,
associated with the Satake diagram of type AIII/IV with no black node, to reformulate the
Kazhdan-Lusztig theory of type B/C without using Hecke algebras directly, and then
provided an elegant conceptual solution to the problem of irreducible characters for
$\mathfrak{osp}$ type Lie superalgbras. This successful application of $\imath$quantum
groups initiates the ``$\imath$-program'': generalizing various achievements for quantum
groups to $\imath$quantum groups.

\subsection{}
In their 1990 paper \cite{BLM90}, Beilinson, Lusztig and MacPherson (BLM) gave a
geometric realization of $U_q(\mathfrak{gl}_n)$ and the canonical basis via partial flag
varieties of type A. Their first step is to obtain the quantum Schur algebra
$\mathcal{S}_{n,d}$ on pairs of $n$-step partial flags in a $d$-dimensional space as a
convolution algebra, which imitates Iwahori's geometric realization of the Hecke algebra
$\mathcal{H}_d$ on pairs of complete flags (cf. \cite{IM65}). Then they derived a
stabilization property from some closed multiplication formulas of $\mathcal{S}_{n,d}$.
This stabilization property helps to construct a bigger algebras in which
$U_q(\mathfrak{gl}_n)$ embeds.

Moreover, a Fock space, equipped with a left (resp. right) action of $\mathcal{S}_{n,d}$
(resp. $\mathcal{H}_d$), can be realized by using a pair of a partial flag and a complete
flag (see \cite{GL92}). This Fock space admits a double centralizer property between
$\mathcal{S}_{n,d}$ and $\mathcal{H}_d$. As a by-product, the celebrated Schur-Jimbo
duality \cite{Jim86} between $U_q(\mathfrak{gl}_n)$ and $\mathcal{H}_d$ is rediscovered
thanks to BLM's stabilization procedure.

The above geometric realization has been generalized to $\imath$quantum groups
\cite{BKLW18, FL15} in the setting of partial flag varieties of type B/C/D (see also
\cite{FLLLW20} for affine type C). Furthermore, the first author and Wang \cite{LW22}
generalized the notion of $n$-step partial flag variety to arbitrary finite type and
then introduced quantum Schur algebras of arbitrary finite type in terms of partial
flags similar to BLM construction.

In the aforementioned double centralizer property between $\mathcal{S}_{n,d}$ and
$\mathcal{H}_d$, Wang \cite{W01} replaced the Fock space by another one that is defined
by using a pair of an $m$-step partial flag and an $n$-step partial flag. By similar
arguments to those in \cite{GL92}, this new Fock space admits a double centralizer
property between two quantum Schur algebras $\mathcal{S}_{m,d}$ and $\mathcal{S}_{n,d}$,
which immediately implies a double centralizer property between $U_q(\mathfrak{gl}_m)$
and $U_q(\mathfrak{gl}_n)$ thanks to BLM's stabilization procedure again. This
observation was also achieved independently by Baumann in an unpublished paper
\cite{Ba07}.

\subsection{}

Our first main result is a general geometric construction of dualities between a pair of
quantum Schur algebras for arbitrary finite type via partial flag varieties (see
Theorem~\ref{duality over Z[q,q{-1}]}), which is a generalization of
$(\mathcal{S}_{m,d}, \mathcal{S}_{n,d})$-duality given in \cite{W01,Ba07}. Such a geometric construction helps us construct a canonical basis of the Fock space by a standard way as in \cite[\S1.4]{BLM90}. Then the positivity of the actions of quantum Schur algebras on the Fock space is derived by a standard geometric argument (see Theorem~\ref{positivity}).

For type A, although Baumann has lifted the above $(\mathcal{S}_{m,d},
\mathcal{S}_{n,d})$-duality to the Howe duality between $U_q(\mathfrak{gl}_m)$ and
$U_q(\mathfrak{gl}_n)$ by BLM's stabilization procedure, we compute the explicit formulas
of the left $U_q(\mathfrak{gl}_m)$-action and right $U_q(\mathfrak{gl}_n)$-action on the
Fock space. We also compute the explicit action formulas of $(U_q(\mathfrak{gl}_m),
U_q(\mathfrak{gl}_n))$-duality via Zhang's quantum coordinate algebras approach. All
these formulas are new. They help us show that Baumann's geometric approach and Zhang's
algebraic approach of $(U_q(\mathfrak{gl}_m), U_q(\mathfrak{gl}_n))$-duality are
equivalent (see Theorem~\ref{equivalent}).

For type B/C, we establish the Howe duality between a pair of $\imath$quantum groups (called an \emph{$\imath$Howe duality}) via
both geometric approach (see
Theorem~\ref{doubleAIV}) and algebraic approach, and prove that they coincide (see Theorems \ref{equivalentj}). In contrast to type A, the (quasi) quantum matrix spaces for type B/C
admit no multiplication but only the comultiplication since there is no comultiplication
on the $\imath$quantum groups of type AIII/IV. Therefore for $\imath$quantum groups we
use the notion of quantum coordinate coalgebra instead. We show that our quantum coordinate coalgebras coincide with the ones introduced by Lai-Nakano-Xiang \cite{LNX22}.

We obtain the multiplicity-free decomposition of the Fock space appeared in the $\imath$Howe duality (see Theorem~\ref{decomp}). The
formulation employs the classical weight module theory of $\imath$quantum groups
established by Watanabe in his recent work \cite{Wa21}.

\subsection{}
The paper is organized as follows. Section \ref{section2} is devoted to the Howe
dualities between a pair of quantum Schur algebras for arbitrary finite type in the sense
of \cite{LW22}. We specialize the general construction at type
A in Section \ref{section3}. Both geometric approach (in terms of flag varieties) and algebraic approach (in term
of coordinate algebras) of $(U_q(\mathfrak{gl}_m), U_q(\mathfrak{gl}_n))$-duality are
revisited. We formulate explicit actions of $U_q(\mathfrak{gl}_m)$ and
$U_q(\mathfrak{gl}_n)$ via both approaches and prove that these two coincide.
In Section \ref{section4}, we establish $\imath$Howe dualities in the setting of partial flag varieties of type B/C. In Section \ref{section5}, we provide an
algebraic construction via quantum coordinate coalgebras, which also coincides with the
geometric one as the same as type A. Finally, we provide the multiplicity-free decomposition of the Fock space appearing in the $\imath$Howe duality in Section \ref{section6}.

\subsection*{Acknowledgement}
We thank Weiqiang Wang for proposing the topic and providing many helpful ideas selflessly. We also thank Hideya Watanabe for explanation on classical weight modules of $\imath$quantum groups, and thank Runqiang Jian for the knowledge of Bruhat cells. We would like to express our gratitude to the referee for the insightful comments towards several improvements of Section 2.

LL is partially supported by the Science and Technology Commission of Shanghai Municipality (grant No. 22DZ2229014, 21ZR1420000) and the NSF of China (grant No. 11871214).


\section{General construction for arbitrary finite type}\label{section2}

\subsection{Weyl group orbits on weight lattice}
Let $G_\Z$ be a split and connected reductive algebraic $\Z$-group, $T_\Z$ a split
maximal torus of $G_\Z$. Let $W$ be the Weyl group of $G_\Z$ associated with $T_\Z$ and
$X$ be its weight lattice. Fix a simple system $\Pi=\{\alpha_1,\ldots,\alpha_d\}$. Then
$W$ is generated by the simple reflections $s_1,\ldots,s_d$. Let us take two W-invariant
finite subsets
\begin{equation*}
X_\ff, X_\g \subset X.
\end{equation*}
Denote
\begin{align*}
\Lambda =\{\mbox{$W$-orbits in $X$}\},\quad
\Lambda_\ff =\{\mbox{$W$-orbits in $X_\ff$}\}, \quad \Lambda_\g=\{\mbox{$W$-orbits in
$X_\g$}\}.
\end{align*}
Note that in each $W$-orbit $\gamma \subset X$, there exists a unique anti-dominant
element which will be denoted by $\mathbf{i}_\gamma$.

For any subset $J \subset \{1,2,\ldots,d \}$, let $W_J$ be the parabolic subgroup of $W$
generated by $\{s_j \mid j \in J\}$. For any $W$-orbit $\gamma \in \Lambda$, we define
the subset
\begin{equation} \label{eq:PP}
J_\gamma= \{k  ~|~  1 \leq k \leq d, \mi_\gamma s_k= \mi_\gamma\}.
\end{equation}
We shall write $W_\gamma=W_{J_\gamma}$.

Let
$\D_\gamma=\{v \in W  ~|~  \ell(wv)= \ell(w)+ \ell(v), \forall w \in W_\gamma\}.$
Then $\D_\gamma$ (resp. $\D_\gamma^{-1}$) is the set of distinguished minimal length
right (resp. left) coset representatives of $W_\gamma$ in $W$.
Denote by
$\D_{\gamma\nu}=\D_\gamma^{-1} \cap \D_\nu$
the set of minimal length double coset representatives of $W_\gamma\setminus W/W_\nu$.
\subsection{Flag varieties}\label{flv}
Let $B_\Z$ be the Borel subgroup of $G_\Z$ corresponding to $\Pi$, and $\F$ a field. Let
$$G=G_\Z (\F), \quad B=B_\Z (\F) \quad\mbox{and}\quad T=T_\Z (\F)$$
be the sets of $\F$-valued points of $G_{\Z}$, $B_\Z$ and $T_\Z$, respectively.

It is known that $W \cong N_G(T)/T$ where $N_G(T)$ is the normalizer of $T$ in $G$. For any $w \in W$, let us choose a representative (still denoted by $w$)
in $G$ of $N_G(T)/T$. Associated to each subset $J$, we have a standard parabolic
subgroup $P_J=BW_JB$ which contains $B$. In particular,
$W_\emptyset=\{ 1 \}$ and hence $P_\emptyset=B$.
For $\gamma \in \Lambda$, we shall denote $P_\gamma=P_{J_\gamma}=BW_\gamma B$.

Denote by $\sB=G/B$ the complete flag variety.
We shall consider another two partial flag varieties:
\begin{equation*}
\sF_\ff= \bigsqcup_{\gamma \in \Lambda_\ff} \sF_\gamma, \quad \sF_\g=\bigsqcup_{\gamma
\in \Lambda_\g} \sF_\gamma, \quad\mbox{where
$\sF_{\gamma}=G/P_{\gamma}$.}
\end{equation*}
Clearly there is a natural $G$-action on $\sF_\gamma$ and hence on $\sF_\ff$ and
$\sF_\g$. Let $G$ act diagonally on $\sF_\gamma\times \sF_\nu (\gamma,\nu \in \Lambda)$,
and so on $\sF_\ff \times \sF_\ff$, $\sF_\ff \times \sF_\g$ and $\sF_\g \times \sF_\g$,
respectively.

Denote
\begin{align*}
\Xi_\ff&=\bigsqcup_{\gamma,\nu \in \Lambda_\ff} \{\gamma\} \times \D_{\gamma\nu}
\times\{\nu\},\qquad
\Xi_{\ff\g}=\bigsqcup_{\gamma \in \Lambda_\ff,\nu \in \Lambda_\g} \{\gamma\} \times
\D_{\gamma\nu} \times \{\nu\}, \\
\Xi_\g&=\bigsqcup_{\gamma,\nu \in \Lambda_\g} \{\gamma\} \times \D_{\gamma\nu} \times
\{\nu\}.
\end{align*}
There is a bijection between $\D_{\gamma\nu}$ and the $G$-orbits $G \backslash
(\sF_\gamma \times \sF_\nu)$, which sends $w \in\D_{\gamma\nu}$ to the $G$-orbit
containing $(P_\gamma,wP_\nu)$. Hence, the $G$-orbits in $\sF_\ff \times \sF_\ff$ (resp.
$\sF_\ff \times \sF_\g$ and $\sF_\g \times \sF_\g$) can be indexed by $\Xi_\ff$ (resp.
$\Xi_{\ff\g}$ and $\Xi_\g$). The orbit related to $\xi\in \Xi_\ff$ or $\Xi_{\ff\g}$ or
$\Xi_\g$
will be denoted by $\Ob_\xi$.

\subsection{Convolution product}
Let $q$ be an indeterminant, and let
$$\A=\Z[q,q^{-1}].$$

We set
$$
\Sc_\ff=\A_G(\sF_\ff \times \sF_\ff), \qquad \Tfg=\A_G(\sF_\ff \times \sF_\g), \qquad
\Sc_\g=\A_G(\sF_\g \times \sF_\g)
$$
to be the spaces of $G$-invariant $\A$-valued functions on $\sF_\ff \times \sF_\ff$,
$\sF_\ff \times \sF_\g$ and $\sF_\g \times \sF_\g$, respectively. Moreover, let
$$\T_\ff=\A_G(\sF_\ff \times \sB),\quad \T_\g=\A_G(\sF_\g \times \sB),\quad
\T_\gamma=\A_G(\sF_\gamma \times \sB) \quad (\forall \gamma \in \Lambda).$$

There is a convolution product $*$ on $\Sc_\ff$ (and on $\Sc_\g$) defined as follows, which is an imitation
of the geometric realization of Hecke algebras $$\mathcal{H}=\A_G(\sB \times \sB)$$ due
to Iwahori (cf. \cite{IM65}). We take $\F=\F_{\mathbf{q}}$ the finite field with
$\mathbf{q}$ elements.
For a triple $(\xi,\xi',\xi'')$ in $\Xi_\ff \times \Xi_\ff \times \Xi_\ff$,
fix $(\mathfrak{f}_1,\mathfrak{f}_2) \in \Ob_{\xi''}$, and  let
$\kappa_{\xi,\xi',\xi'';\q}$ be the number of $\mathfrak{f} \in \sF_\ff$ such that
$(\mathfrak{f} _1,\mathfrak{f} ) \in \Ob_\xi$ and $(\mathfrak{f},\mathfrak{f}_2) \in
\Ob_{\xi'}$.
A well-known property (cf. \cite{BLM90}) implies that there exists a polynomial
$\kappa_{\xi,\xi',\xi''} \in \Z[q^{-2}]$ such that
$\kappa_{\xi,\xi',\xi'';\q}=\kappa_{\xi,\xi',\xi''}|_{q^{-2}=\q}$ for all prime powers
$\q$. Let $\chi_\xi$ be the characteristic function of the orbit
$\Ob_\xi$. We define the convolution product on $\Sc_\ff$ by letting
\begin{equation*}
\chi_\xi*\chi_{\xi'}=\sum_{\xi''} \kappa_{\xi,\xi',\xi''}\chi_{\xi''}.
\end{equation*}
Equipped with the convolution product, the $\A$-module $\Sc_\ff$ (similarly $\Sc_\g$)
becomes an associative $\A$-algebra, which is named a \emph{quantum Schur algebra} or \emph{$q$-Schur
algebra} in \cite{LW22}.

A convolution product analog for $\Sc_\ff$ (resp. $\Sc_\g$) by regarding
$(\xi,\xi',\xi'') \in \Xi_\ff \times \Xi_{\ff\g} \times \Xi_{\ff\g}$ (resp. $\Xi_{\ff\g}
\times \Xi_{\ff\g} \times \Xi_\g$) gives us
a left $\Sc_\ff$-action $\Phi$ (resp. right $\Sc_\g$-action $\Psi$) on $\Tfg$:
$$\Sc_\ff \quad \stackrel{\Phi}\curvearrowright \quad \Tfg \quad
\stackrel{\Psi}\curvearrowleft \quad \Sc_\g.$$
The two actions $\Phi$ and $\Psi$ commute by definition.

Here is a technical lemma about the convolution product.
\begin{lem}
	\label{arithmetics of special case}
	Let $\xi=(\gamma, w, \nu), \xi'=(\nu, 1, \mu)$ with $P_\mu\subset P_\nu$. Then
	\begin{align*}
		\chi_\xi*\chi_{\xi'} =\chi_{(\gamma, w, \mu)}+\sum_{w\neq\sigma \in \D_{\gamma\mu} \cap (W_\gamma w W_\nu)}a_\sigma \chi_{(\gamma, \sigma, \mu)}, \quad (a_\sigma\in\A).
	\end{align*}
\end{lem}
\begin{proof}
Let $g\in\D_{\gamma\mu}$ satisfy  $(P_\gamma, g'P_\nu) \in \Ob_\xi$ and $(g'P_\nu, gP_\mu) \in \Ob_{\xi'}$ for some $g'\in G$. Since $(P_\gamma, g'P_\nu)\sim(P_\gamma, wP_\nu)\in\Ob_\xi$, we have $g'\in P_\gamma w P_\nu$. Here and below we always write $X\sim Y$ to reveal that $X$ and $Y$ are in the same $G$-orbit. Since $(P_\nu, g'^{-1}gP_\mu)\sim(P_\nu,P_\mu)\in\Ob_{\xi'}$, we have $g'^{-1}g\in P_\nu P_\mu=P_{\nu}$, where $P_\nu P_\mu=P_\nu$ follows from the condition $P_\mu\subset P_\nu$. Therefore, $g\in g'P_\nu\subset  P_\gamma w P_\nu P_\nu=P_\gamma w P_\nu=BW_\gamma BwBw_\nu B\subset BW_\gamma w W_\nu B$, where the last inclusion ``$\subset$'' comes from the well known property about the product of Bruhat cells (cf. \cite[\S2]{Bo68}). So $(P_\gamma, gP_\mu)\sim(P_\gamma, \sigma P_\mu)$ for some $\sigma \in \D_{\gamma\mu} \cap (W_\gamma w W_\nu)$, and hence $\chi_\xi*\chi_{\xi'}\in\sum_{g \in \D_{\gamma\mu}}\A \chi_{(\gamma, g, \mu)}=\sum_{\sigma \in \D_{\gamma\mu} \cap (W_\gamma w W_\nu)}\A \chi_{(\gamma, \sigma, \mu)}$. We obtain $\chi_\xi*\chi_{\xi'}=\sum_{\sigma \in \D_{\gamma\mu} \cap (W_\gamma w W_\nu)}a_\sigma \chi_{(\gamma, \sigma, \mu)}$ for some $a_\sigma\in\A$.

Clearly, $w\in\D_{\gamma\nu}\subset\D_{\gamma\mu} \cap (W_\gamma w W_\nu)$ because of $P_{\mu}\subset P_{\nu}$. Now let us prove $a_{w}=1$ by counting the number of flags $\mathfrak{f}\in G/P_\nu$ such that $(P_\gamma, \mathfrak{f}) \in \Ob_\xi$ and $(\mathfrak{f}, wP_\mu) \in \Ob_{\xi'}$. Write $\mathfrak{f}=gP_\nu$ for some $g\in G$. We know $wP_\mu\subset wP_\nu$ because of the condition $P_\mu\subset P_\nu$. Meanwhile, since $(gP_\nu,wP_\mu)\sim(P_\nu,P_\mu)\in\Ob_{\xi'}$, we have $wP_\mu \subset gP_\nu$ by the condition $P_\mu\subset P_\nu$ again. Thus $gP_\nu\cap wP_\nu\supset wP_\mu \neq\emptyset$, which implies that $\mathfrak{f}=gP_\nu= wP_\nu$ is unique, i.e. $a_{w}=1$ as desired.
\end{proof}

Lemma~\ref{arithmetics of special case} immediately implies the following corollary, which will be employed twice in the proof of Theorem~\ref{duality over Z[q,q{-1}]}.
\begin{cor}
\label{coefficient}
  For any $\gamma,\nu\in\Lambda_\ff,\mu\in\Lambda_{\g}$ with $P_{\mu}\subset P_{\nu}$ and $w\in\D_{\gamma\nu}$, the coefficient of $\chi_{(\gamma,w,\nu)}$ in $\chi$ coincides with that of $\chi_{(\gamma,w,\mu)}$ in $\chi\ast\chi_{(\nu,1,\mu)}$.
\end{cor}

\subsection{A canonical basis}
Now assume that $\F$ is algebraically closed. A bar involution $\bar{}:\Sc_\ff \to \Sc_\ff$ and a canonical basis $\mathbf{B}(\Sc_\ff)$ of $\Sc_\ff$ were constructed
in \cite[\S 4.3]{LW22}. We shall introduce a canonical basis of $\T_{\ff\g}$ by a similar way in this subsection.

For any $\xi =(\gamma, w, \nu)\in \Xi_{\ff\g}$, we denote $[\xi]=q^{d(\xi)-r(\xi)}
\chi_\xi$ where $d(\xi)=\dim(\Ob_\xi)$ and $r(\xi)=\dim(G/P_{\gamma})$. Then $\{[\xi]~|~\xi\in\Xi_{\ff\g}\}$ forms basis for $\T_{\ff\g}$ (called a standard basis).

Let $\mathrm{IC}_\xi$ be the shifted intersection complex associated with
$\overline{\Ob}_\xi$ such that the restriction of $\mathrm{IC}_\xi$ to $\Ob_\xi$ is the
constant sheaf of dimension $1$ on $\Ob_\xi$. Let $\mathscr{H}_{\xi'}(\mathrm{IC}_\xi)$
denote the stalk of the $i$th cohomology group of $\mathrm{IC}_\xi$ at any point in
$\Ob_{\xi'}$ (for $\Ob_{\xi'} \subset \overline{\Ob}_\xi$). We set
\begin{align*}
\{\xi\}&=\sum_{\xi' \leq \xi} P_{\xi',\xi}[\xi']\quad\mbox{where}\quad P_{\xi',\xi}=\sum_{i \in \Z}
\dim{\mathscr{H}_{\xi'}(\mathrm{IC}_\xi)}q^{-i+d(\xi)-d(\xi')}.
\end{align*}
Here the partial order $<$ is the orbit closure order. That is, for $\xi=(\gamma,g,\nu)$ and $\xi'=(\gamma',g',\nu')$,
$$\xi'<\xi \quad \Leftrightarrow \quad \gamma'=\gamma, \nu'=\nu, g'<g.$$
The properties of intersection complexes imply that $P_{\xi,\xi}=1$ and $P_{\xi',\xi} \in q\N[q]$ for $\xi'<\xi$.
As in \cite[\S 1.4]{BLM90}, we have an anti-linear bar involution $\bar{}:\T_{\ff\g} \to \T_{\ff\g}$ such that
$$\overline{\{\xi\}} = \{\xi\}\quad \mbox{for any $\xi\in\Xi_{\ff\g}$}.$$ In particular,
$$\overline{[\xi]}=\sum_{\xi' \leq \xi} c_{\xi',\xi}[\xi'], \quad\mbox{where} \ c_{\xi,\xi}=1,
c_{\xi',\xi} \in \A.$$ Then $\mathbf{B}(\T_{\ff\g}):=\{\{\xi\} ~|~ \xi \in \Xi_\ff\}$ forms an $\A$-basis for
$\T_{\ff\g}$, called a canonical basis. The bar maps (on $\Sc_\ff$, $\T_{\ff\g}$ and $\Sc_\g$) are compatible with the commuting actions of $(\Sc_\ff,\Sc_\g)$ on $\T_{\ff\g}$.

\begin{thm}[Positivity property]\label{positivity}
For any $a\in\mathbf{B}(\Sc_\ff)$, $b\in\mathbf{B}(\T_{\ff\g})$ and $c\in\mathbf{B}(\Sc_\g)$, we write
$$a\cdot b=\sum_{b'\in\mathbf{B}(\T_{\ff\g})}m_{a,b}^{b'}b',\quad b\cdot c=\sum_{b'\in\mathbf{B}(\T_{\ff\g})}n_{b,c}^{b'}b',\quad\mbox{for $m_{a,b}^{b'},n_{b,c}^{b'}\in\A$}.$$
Then we must have $m_{a,b}^{b'},n_{b,c}^{b'}\in \mathbb{N}[q,q^{-1}]$.
\end{thm}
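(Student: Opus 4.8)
The plan is to deduce positivity from a geometric description of the structure constants $m_{a,b}^{b'}$ and $n_{b,c}^{b'}$ as (graded) dimensions of stalks of intersection complexes, exactly as in the classical BLM framework and its refinement in \cite{LW17}. First I would make the actions $\Phi$ and $\Psi$ fully geometric: for $\xi\in\Xi_\ff$, $\eta\in\Xi_{\ff\g}$, the product $\chi_\xi*\chi_\eta$ in $\T_{\ff\g}$ is computed by counting, for a fixed pair $(\mathtt{f}_1,\mathtt{f}_2)\in\Ob_{\eta''}\subset\sF_\ff\times\sF_\g$, the number of $\mathtt{f}\in\sF_\ff$ with $(\mathtt{f}_1,\mathtt{f})\in\Ob_\xi$ and $(\mathtt{f},\mathtt{f}_2)\in\Ob_\eta$; this is a finite-field point count of a variety that is the fibre of a proper map between $G$-orbit closures. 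By the same argument as in \cite[\S1.4]{BLM90} (the ``well-known property'' already invoked in the excerpt), this count is a polynomial in $\q$ with nonnegative integer coefficients, and in the canonical normalization $[\xi]$, $[\eta]$ it becomes a Laurent polynomial in $q$ with nonnegative integer coefficients. So already in the \emph{standard} bases the structure constants lie in $\N[q,q^{-1}]$; the content of the theorem is to pass to the canonical bases.

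Next I would invoke the standard sheaf-theoretic dictionary. The orbit $\Ob_\eta$ sits inside $\sF_\ff\times\sF_\g$, and the convolution diagram
\[
\sF_\ff\times\sF_\ff\times\sF_\g \;\longleftarrow\; Z \;\longrightarrow\; \sF_\ff\times\sF_\g
\]
realizing $\Phi$ is a composition of a smooth pullback with a proper pushforward. The Decomposition Theorem (Beilinson--Bernstein--Deligne--Gabber) then guarantees that the pushforward of the (shifted, semisimple) complex $\mathrm{IC}_\xi\boxtimes\mathrm{IC}_\eta$ along the proper map is a direct sum of shifted $\mathrm{IC}$-sheaves $\mathrm{IC}_{\eta'}$ on orbit closures $\overline{\Ob}_{\eta'}$, with \emph{nonnegative} multiplicities that are sums of monomials $q^{\pm i}$. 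Unwinding the normalizations $\{\xi\}=\sum P_{\xi',\xi}[\xi']$ (with $P_{\xi',\xi}\in\N[q]$) and $[\xi]=q^{d(\xi)-r(\xi)}\chi_\xi$, one reads off that $\{\xi\}\cdot\{\eta\}=\sum_{\eta'} m_{\xi,\eta}^{\eta'}\{\eta'\}$ with each $m$ equal to $\sum_i \dim\mathscr{H}^{\,i}(\text{a stalk of the pushforward})\, q^{\text{shift}}$, hence in $\N[q,q^{-1}]$. The right $\Sc_\g$-action $\Psi$ is handled symmetrically, using the convolution diagram on the $\sF_\g$-side; since $\Phi$ and $\Psi$ commute and the bar maps are compatible with both (as recorded just before the theorem), no new input is needed for $n_{b,c}^{b'}$.

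The main obstacle is making the ``smooth pullback then proper pushforward'' factorization of $\Phi$ precise in the setting of the iterated partial flag varieties of \cite{LW17} for arbitrary finite type, rather than just type A. Concretely: one must exhibit the space $Z$ parametrizing compatible triples $(\mathtt{f}_1,\mathtt{f},\mathtt{f}_2)$ as a $G$-variety fibred smoothly over $\sF_\ff\times\sF_\g$ via $(\mathtt{f}_1,\mathtt{f}_2)$ and over the orbit-pair data in a way compatible with the orbit stratifications, and check that the relevant map is proper (automatic, since all the flag varieties are projective) \emph{and} that the stratification by $(\Ob_\xi$-type$,\Ob_\eta$-type$)$ is the one along which $\mathrm{IC}_\xi\boxtimes\mathrm{IC}_\eta$ is constructible. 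For type A this is classical; in general it follows from the fact that $\sF_\gamma=G/P_\gamma$ is still a (projective, smooth) partial flag variety and $G$-orbits on products are indexed by the $\D_{\gamma\nu}$ (as recalled in \S\ref{flv}), so the same Bruhat-type combinatorics and the same parity/purity properties of the $P_{\xi',\xi}$ (already asserted: $P_{\xi,\xi}=1$, $P_{\xi',\xi}\in q\N[q]$) are available. Once this geometric setup is in place, the positivity is a formal consequence of the Decomposition Theorem, precisely parallel to \cite[\S1.4]{BLM90} and \cite{LW17}.
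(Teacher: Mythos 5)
Your proposal is correct and follows essentially the same route as the paper, whose proof is a one-line appeal to the geometric interpretation of the canonical bases and their actions via perverse sheaves and convolution products; you have simply spelled out that argument (convolution diagram, Decomposition Theorem, matching of normalizations) in detail.
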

\begin{proof}
  This follows from the geometric interpretation of these canonical bases and their action in terms of perverse sheaves and their convolution products.
\end{proof}

\subsection{The $(\Sc_\ff,\Sc_\g)$-duality} \label{sec2.5}
Let $$\mathbb{A}=\K(q),$$
where $\K$ is an arbitrary field of characteristic $0$.
We will always add a subscript $\mathbb{A}$ on the bottom-left of an $\A$-module (or an
$\A$-map) to mean the base change $\mathbb{A} \otimes_\A -$, e.g. ${}_\mathbb{A}\!\HH$,
${}_\mathbb{A}\!\Sc_\ff$, ${}_\mathbb{A}\!\Tfg$, ${}_\mathbb{A}\!\Phi$, etc.

\begin{thm}
	\label{duality}
	The actions
	$${}_\mathbb{A}\!\Sc_\ff \quad \stackrel{{}_\mathbb{A}\!\Phi}\curvearrowright \quad
{}_\mathbb{A}\!\Tfg \quad \stackrel{{}_\mathbb{A}\!\Psi}\curvearrowleft \quad
{}_\mathbb{A}\!\Sc_\g$$
	satisfy the double centralizer property
	$${}_\mathbb{A}\!\Phi({}_\mathbb{A}\!\Sc_\ff) =
\End_{{}_\mathbb{A}\!\Sc_\g}({}_\mathbb{A}\!\Tfg), \quad
\End_{{}_\mathbb{A}\!\Sc_\ff}({}_\mathbb{A}\!\Tfg)={}_\mathbb{A}\!\Psi({}_\mathbb{A}\!\Sc_\g).$$
\end{thm}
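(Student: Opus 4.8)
The plan is to reduce the double centralizer property to a statement about Hecke algebras, where it is classical, by exhibiting $\Tfg$ as a sum of $q$-permutation modules and using the standard ``Schur-Weyl'' machinery. First I would observe that, as in \cite{GL92,W01,Ba07}, there is a natural identification realizing $\T_\ff=\A_G(\sF_\ff\times\sB)$ as a module over $\Sc_\ff=\A_G(\sF_\ff\times\sF_\ff)$ on the left and over $\HH=\A_G(\sB\times\sB)$ on the right, and these two actions satisfy the double centralizer property over $\mathbb{A}$; the analogous statement holds for $\T_\g$ with $\Sc_\g$ and $\HH$. Concretely, $\T_\ff\cong\bigoplus_{\gamma\in\Lambda_\ff}\A_G(\sF_\gamma\times\sB)$, and each summand is the ``induced'' module $\mathbf{x}_\gamma\HH$ for the corresponding parabolic, so $\T_\ff$ is a progenerator for $\Sc_\ff$ in an appropriate sense, with $\Sc_\ff\cong\End_{\HH}(\T_\ff)$; dually for $\Sc_\g$. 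These facts I would take from \cite{GL92,W01} after base change to $\mathbb{A}=\K(q)$, where $\HH$ is semisimple.

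Next I would relate $\Tfg$ to the pair $(\T_\ff,\T_\g)$. The key point is that $\sF_\ff\times\sF_\g$ fibers compatibly through $\sB$: using the convolution formalism, one has a canonical isomorphism of $(\Sc_\ff,\Sc_\g)$-bimodules
\[
{}_\mathbb{A}\!\Tfg\;\cong\;{}_\mathbb{A}\!\T_\ff\otimes_{{}_\mathbb{A}\!\HH}\;{}_\mathbb{A}\!\T_\g^{*},
\]
or more symmetrically, $\Tfg\cong\Hom_{\HH}(\T_\g,\T_\ff)$ as bimodules, where the left $\Sc_\ff$-action is postcomposition via $\Sc_\ff\cong\End_\HH(\T_\ff)$ and the right $\Sc_\g$-action is precomposition via $\Sc_\g\cong\End_\HH(\T_\g)$. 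To prove this isomorphism I would compare the standard bases: both sides have $\A$-bases indexed by $\Xi_{\ff\g}$ (on the right-hand side, by $G$-orbits on triples of flags, reduced via Iwahori's theorem for $\HH$), and the convolution structure constants match those defining $\Phi$ and $\Psi$. This is the step I expect to be the main obstacle — not conceptually deep, but requiring care to set up the fiber-product bookkeeping and to check that the bimodule identifications are the ones induced by $\Phi$ and $\Psi$ rather than differing by a twist.

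With that isomorphism in hand the double centralizer property is formal. Since ${}_\mathbb{A}\!\HH$ is semisimple, write ${}_\mathbb{A}\!\T_\ff\cong\bigoplus_\lambda V_\lambda^{\oplus a_\lambda}$ and ${}_\mathbb{A}\!\T_\g\cong\bigoplus_\lambda V_\lambda^{\oplus b_\lambda}$ over the simple ${}_\mathbb{A}\!\HH$-modules $V_\lambda$; then ${}_\mathbb{A}\!\Sc_\ff\cong\prod_{\lambda:a_\lambda\neq0}\mathrm{Mat}_{a_\lambda}(\mathbb{A})$ and similarly for $\Sc_\g$, and
\[
{}_\mathbb{A}\!\Tfg\;\cong\;\bigoplus_{\lambda}\Hom_{\mathbb{A}}(\mathbb{A}^{b_\lambda},\mathbb{A}^{a_\lambda})
\]
as an $\bigl(\prod\mathrm{Mat}_{a_\lambda},\prod\mathrm{Mat}_{b_\lambda}\bigr)$-bimodule. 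For such a bimodule the endomorphism ring of the left action over the right action is visibly $\prod_{\lambda:a_\lambda\neq0}\mathrm{Mat}_{a_\lambda}(\mathbb{A})$, i.e. exactly the image ${}_\mathbb{A}\!\Phi({}_\mathbb{A}\!\Sc_\ff)$ (note $\Phi$ is injective by the realization above), and symmetrically on the other side. This gives both claimed isomorphisms
\[
{}_\mathbb{A}\!\Phi({}_\mathbb{A}\!\Sc_\ff)\cong\End_{{}_\mathbb{A}\!\Sc_\g}({}_\mathbb{A}\!\Tfg),\qquad
\End_{{}_\mathbb{A}\!\Sc_\ff}({}_\mathbb{A}\!\Tfg)\cong{}_\mathbb{A}\!\Psi({}_\mathbb{A}\!\Sc_\g),
\]
completing the proof. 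Alternatively, one can avoid the explicit semisimple decomposition and argue directly: $\Tfg\cong\Hom_\HH(\T_\g,\T_\ff)$ with $\T_\g$ a projective ${}_\mathbb{A}\!\HH$-generator for the relevant block and $\T_\ff$ likewise, so the double centralizer follows from the general Morita-theoretic fact that $\End_{\End_\HH(\T_\g)}\Hom_\HH(\T_\g,\T_\ff)=\Hom_\HH(\T_\ff,\T_\ff)$ acting on $\T_\ff$, which recovers $\Sc_\ff$.
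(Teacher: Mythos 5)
Your proposal is correct and takes essentially the same route as the paper: both identify ${}_\mathbb{A}\!\Sc_\ff\cong\End_{{}_\mathbb{A}\!\HH}({}_\mathbb{A}\!\T_\ff)$ and ${}_\mathbb{A}\!\Sc_\g\cong\End_{{}_\mathbb{A}\!\HH}({}_\mathbb{A}\!\T_\g)$, realize ${}_\mathbb{A}\!\Tfg\cong\Hom_{{}_\mathbb{A}\!\HH}({}_\mathbb{A}\!\T_\g,{}_\mathbb{A}\!\T_\ff)\cong{}_\mathbb{A}\!\T_\ff\otimes_{{}_\mathbb{A}\!\HH}{}_\mathbb{A}\!\T_\g^*$, and then use semisimplicity of ${}_\mathbb{A}\!\HH$ at generic $q$ to obtain the multiplicity-free decomposition from which the double centralizer property is formal. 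The only slip is your parenthetical that $\Phi$ is injective: simple summands of ${}_\mathbb{A}\!\T_\ff$ whose Hecke partner does not occur in ${}_\mathbb{A}\!\T_\g$ act by zero on ${}_\mathbb{A}\!\Tfg$, so the relevant endomorphism algebra is $\prod_{\lambda:\,a_\lambda b_\lambda\neq 0}\mathrm{Mat}_{a_\lambda}(\mathbb{A})$ and coincides with the image ${}_\mathbb{A}\!\Phi({}_\mathbb{A}\!\Sc_\ff)$ but not necessarily with ${}_\mathbb{A}\!\Sc_\ff$ itself --- which is exactly why the theorem is stated for the image and why Corollary~\ref{strong duality} requires the extra hypothesis on maximal orbits.
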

\begin{proof}
It has been shown in \cite[Theorem~4.2]{LW22} that
	$$\Sc_\ff \cong \End_\HH(\T_\ff),
\quad \Sc_\g \cong \End_\HH(\T_\g),$$
and hence naturally
$${}_\mathbb{A}\!\Sc_\ff \cong \End_{{}_\mathbb{A}\!\HH}({}_\mathbb{A}\!\T_\ff),
\quad {}_\mathbb{A}\!\Sc_\g \cong \End_{{}_\mathbb{A}\!\HH}({}_\mathbb{A}\!\T_\g).$$

The Hecke algebra ${}_\mathbb{A}\!\HH$ is split semisimple when take $q$ an indeterminant, so are the quantum Schur algebras
${}_\mathbb{A}\!\Sc_\ff$, ${}_\mathbb{A}\!\Sc_\g$ and their quotients ${}_\mathbb{A}\!\Phi({}_\mathbb{A}\!\Sc_\ff)$, ${}_\mathbb{A}\!\Psi({}_\mathbb{A}\!\Sc_\g)$.
Hence $${}_\mathbb{A}\!\Tfg \cong
\Hom_{{}_\mathbb{A}\!\HH}({}_\mathbb{A}\!\T_\g,{}_\mathbb{A}\!\T_\ff) \cong
{}_\mathbb{A}\!\T_\ff \otimes_{{}_\mathbb{A}\!\HH} {}_\mathbb{A}\!\T_\g^*$$
and $${}_\mathbb{A}\!\T_\ff\cong\bigoplus_{i} V_i\otimes M_i\quad\mbox{and}\quad {}_{\mathbb{A}}\!\T_\g\cong\bigoplus_{i} U_i\otimes M_i$$
where $V_i$'s (resp. $U_i$'s) are all left simple ${}_\mathbb{A}\!\Phi({}_\mathbb{A}\!\Sc_\ff)$-modules (resp. ${}_\mathbb{A}\!\Psi({}_\mathbb{A}\!\Sc_\g)$-modules) and $M_i$'s are certain right simple ${}_\mathbb{A}\!\HH$-modules up to an isomorphism.
Compute that
$${}_\mathbb{A}\!\Tfg\cong{}_\mathbb{A}\!\T_\ff \otimes_{{}_\mathbb{A}\!\HH} {}_\mathbb{A}\!\T_\g^*\cong\bigoplus_{i,j}V_i\otimes M_i\otimes_{{}_\mathbb{A}\!\HH}M_j^*\otimes U_j^*\cong\bigoplus_{i}V_i\otimes M_i\otimes_{{}_\mathbb{A}\!\HH}M_i^*\otimes U_i^*=\bigoplus_{i}V_i\otimes U_i^*,$$
which gives a multiplicity-free decomposition of ${}_\mathbb{A}\!\Tfg$ as an $({}_\mathbb{A}\!\Sc_\ff,{}_\mathbb{A}\!\Sc_\g)$-module.

Note that ${}_\mathbb{A}\!\Phi({}_\mathbb{A}\!\Sc_\ff)\cong\bigoplus_i\mathrm{End}_{\mathbb{A}}(V_i)$ by the Wedderburn-Artin Theorem since ${}_\mathbb{A}\!\Phi({}_\mathbb{A}\!\Sc_\ff)$ is semisimple.
We compute
$$\End_{{}_\mathbb{A}\!\Sc_\g}({}_\mathbb{A}\!\Tfg)\cong\End_{{}_\mathbb{A}\!\Sc_\g}(\bigoplus_{i}V_i\otimes U_i^*)\cong\bigoplus_i \mathrm{End}_{\mathbb{A}}(V_i)\otimes\mathrm{id}_{U_i^*}\cong\bigoplus_i \mathrm{End}_{\mathbb{A}}(V_i)\cong{}_\mathbb{A}\!\Phi({}_\mathbb{A}\!\Sc_\ff)$$
where the second ``$\cong$'' follows from Schur's Lemma. Similarly, $\End_{{}_\mathbb{A}\!\Sc_\ff}({}_\mathbb{A}\!\Tfg)={}_\mathbb{A}\!\Psi({}_\mathbb{A}\!\Sc_\g)$.
Thus the double centralizer property stated in the theorem is derived.
\end{proof}

Denote
$$\mathcal{M}_\ff=\mbox{the set of minimal parabolic subgroups in } \{P_\mu~|~\mu\in \Lambda_\ff\},$$
$$\mathcal{M}_\g=\mbox{the set of minimal parabolic subgroups in } \{P_\mu~|~\mu\in \Lambda_\g\}.$$
Clearly, the set $X_\ff$ contains at least one regular $W$-orbit if and only if $\mathcal{M}_\ff=\{B\}$.

Now we are ready to provide the following $(\Sc_\ff,\Sc_\g)$-duality over $\A=\mathbb{Z}[q,q^{-1}]$.
\begin{thm}
	\label{duality over Z[q,q{-1}]}
	If $\mathcal M_\ff=\mathcal M_\g$, then the actions
	$$\Sc_\ff \quad \stackrel{\Phi}\curvearrowright \quad \Tfg \quad
\stackrel{\Psi}\curvearrowleft \quad \Sc_\g$$
	satisfy
	$$\Sc_\ff \cong \Phi(\Sc_\ff)= \End_{\Sc_\g}(\Tfg), \quad  \End_{\Sc_\ff}(\Tfg)=
\Psi(\Sc_\g)\cong(\Sc_\g)^{op}.$$
\end{thm}
\begin{proof}
Take arbitrary nonzero $\chi=\sum_{\xi\in\Xi_\ff}a_\xi\chi_\xi \in \Sc_\ff$. There exists $(\gamma,w,\nu)\in \Xi_\ff$ such that $a_{(\gamma,w,\nu)}\neq0$. Since $\mathcal M_\ff=\mathcal M_\g$,  we can find $\mu \in \sF_\g$ such that $P_\mu\subset P_\nu$. Write $\Phi(\chi)(\chi_{(\nu,1,\mu)})=\chi*\chi_{(\nu,1,\mu)}=\sum_{\xi'\in\Xi_{\ff\g}}b_{\xi'}\chi_{\xi'}\in\Tfg$.
Corollary~\ref{coefficient} tells us that $b_{(\gamma, w, \mu)}=a_{(\gamma,w,\nu)}\neq0$, which implies $\Phi(\chi)\neq0$ and hence $\Phi$ is injective. So we have $\Sc_\ff \cong \Phi(\Sc_\ff)$.

It is obvious that  $\Phi(\Sc_\ff)\subset\End_{\Sc_\g}(\Tfg)$ by the definition of the convolution product.
Below we shall prove $\Phi(\Sc_\ff)\supset \End_{\Sc_\g}(\Tfg)$.

We regard $\mathrm{End}_\A(\Tfg)$ as a subring of $\mathrm{End}_{\mathbb{A}}({}_\mathbb{A}\!\Tfg)$ by the natural way, and hence $\End_{\Sc_\g}(\Tfg) \subset		\End_{{}_\mathbb{A}\!\Sc_\g}({}_\mathbb{A}\!\Tfg)={}_\mathbb{A}\!\Phi({}_\mathbb{A}\!\Sc_\ff)$
by Theorem~\ref{duality}.
	
Let us specialize $\mathbb{A}=\mathbb{Q}(q)$ in the following arguments. Take any $\tau \in \End_{\Sc_\g}(\Tfg)\subset {}_\mathbb{A}\!\Phi({}_\mathbb{A}\!\Sc_\ff)$. Under the assumption $\mathbb{A}=\mathbb{Q}(q)$, we can write $\tau=a^{-1}\tau'$ with $0\neq a\in\A$ and $\tau'\in\Phi(\Sc_\ff)$. Suppose $a\tau=\tau'=\Phi(\chi)$ with $\chi=\sum_{\xi\in\Xi_\ff}a_\xi\chi_\xi \in\Sc_\ff$. For any $(\gamma,w,\nu)\in \Xi_\ff$, we can find $\mu \in \sF_\g$ such that $P_\mu\subset P_\nu$ since $\mathcal M_\ff=\mathcal M_\g$. Compute $\chi\ast\chi_{(\nu,1,\mu)}=\Phi(\chi)(\chi_{(\nu,1,\mu)})=a\tau(\chi_{(\nu,1,\mu)}) =\sum_{\xi'\in\Xi_{\ff\g}}b_{\xi'}\chi_{\xi'} \in a\Tfg$. Therefore, $a_{(\gamma,w,\nu)}=b_{(\gamma, w, \mu)} \in a\A$ by Corollary~\ref{coefficient} again. Hence $a\tau=\Phi(\chi) \in a\Phi(\Sc_\ff)$, i.e., $\tau \in \Phi(\Sc_\ff)$, which implies $\End_{\Sc_\g}(\Tfg)\subset\Phi(\Sc_\ff)$. So $\Phi(\Sc_\ff)=\End_{\Sc_\g}(\Tfg)$ as desired.
	
Likewise $(\Sc_\g)^{op} \cong \Psi(\Sc_\g)=\End_{\Sc_\ff}(\Tfg)$.
\end{proof}

\begin{rem}
  If we take $X_\mathbf{g}$ to be a single regular $W$-orbit, the associated quantum
  Schur algebra is isomorphic to the Hecke algebra $\mathcal{H}$. So our geometric Howe
  dualities implies the quantum Schur dualities (of any finite type).
\end{rem}

\begin{rem}
For classical types ABCD and a special $W$-invariant finite subset $X_\ff$, it is known
that the convolution product on $\Sc_\ff$ admits a stabilization property, which brings
us a geometric realization of the quantum group $\qU$ and its coideal subalgebras (cf.
\cite{BLM90,BKLW18,FL15,LL21}). This geometric approach is also valid for affine type
(cf. \cite{Lu99,FLLLW20}). We will treat classical types in detail in latter sections,
where the Howe dualities are stated via quantum groups or $\imath$quantum groups instead
of quantum Schur algebras.
\end{rem}


\section{Howe duality for quantum general linear groups}\label{section3}

\subsection{Weights and orbits of type $\mA_{d-1}$}

Take $X=\sum_{i=1}^d \mathbb{Z}\delta_i$ to be the weight lattice for $\mathrm{GL}_d$,
where $\{\delta_i\}_{i=1}^d$ forms its standard basis. The Weyl group $\mathfrak{S}_d$
acts on $X$ by permutating $\delta_i$. For any positive integer $n \geq d$, we set
$$X_n=\{\sum_{i=1}^d a_i\delta_i  ~|~  a_i \in \mathbb{Z}, 1 \leq a_i \leq n, \forall 1
\leq i \leq d\},$$
which is clearly an $\mathfrak{S}_d$-invariant finite subset of $X$. We sometimes write a
weight by $(a_1,a_2,\ldots,a_d)$ instead of $\sum_{i=1}^d a_i\delta_i$.

Each $\mathfrak{S}_d$-orbit in $X_n$ can be described by the set of all compositions of
$d$ into $n$ parts
\begin{equation*}
\Lambda_{n,d}=\{\gamma=(\gamma_1,\gamma_2,\ldots,\gamma_n)  ~|~  \sum_{i=1}^n
\gamma_i=d\},
\end{equation*}
where an orbit $\gamma=(\gamma_1,\gamma_2,\ldots,\gamma_n) \in \Lambda_{n,d}$ consists of
all weights $\sum_{i=1}^d a_i\delta_i \in X_n$ such that $$\gamma_k=\sharp\{i ~|~
a_i=k,i=1,\ldots,d\},\quad(k=1,2,\ldots,n).$$
The unique anti-dominant element in an orbit $\gamma=(\gamma_1,\gamma_2,\ldots,\gamma_n)$
is
$$\mi_\gamma=(\underbrace{1,\ldots,1}_{\gamma_1},\underbrace{2,\ldots,2}_{\gamma_2},\ldots,\underbrace{n,\ldots,n}_{\gamma_n}).$$
Furthermore, the set $J_\gamma$ defined in \eqref{eq:PP} is
$$J_\gamma=\{1,2,\ldots,d\}\backslash\{\gamma_1,\gamma_1+\gamma_2,\ldots,\gamma_1+\ldots+\gamma_n\}.$$

\subsection{Flag varieties of type A}
We denote the set of all $n$-step partial flags of $\F^d$ by
$$\sF_{n,d}=\{\mathfrak{f}=(0=V_0\subset V_1 \subset \ldots \subset V_n=\F^d)\}.$$

We fix a basis $\{v_1,\ldots,v_d\}$ of $\F^d$, and set $W_i=\langle
v_1,\ldots,v_i\rangle$. For $\gamma =(\gamma_1,\gamma_2,\ldots,\gamma_n)\in \Lambda_{n,d}$, the parabolic subgroup $P_\gamma$, defined in Subsection~\ref{flv}, consists of the elements which stabilize the flag
$$\mathfrak{f}_\gamma:=(0 \subset W_{\gamma_1} \subset W_{\gamma_1+\gamma_2} \subset
\ldots \subset W_d=\F^d).$$
\begin{lem}
	\label{variety isomorphism}
	As varieties,
	$$\bigsqcup_{\gamma \in \Lambda_{n,d}} \mathrm{GL}_d/P_{\gamma} \simeq \sF_{n,d},
\quad [g] \mapsto g\mathfrak{f}_\gamma.$$
	\end{lem}

\begin{proof}
	It is known that for an algebraic group $G$ and a $G$-variety $X$, the orbit
containing $x \in X$ is isomorphic to $G/\mathrm{Stab}(x)$. So the map is well-defined and injective because of the fact that $\mathrm{Stab}(\mathfrak{f}_\gamma)= P_\gamma$.
	
	Let $\mathfrak{f}=(0=V_0\subset V_1 \subset \ldots \subset V_n=\F^d)$. Then there
exists $\gamma \in \Lambda_{n,d}$ and a basis $\{v_1', \ldots, v_d'\}$ such that
$\gamma_i= \dim V_i/V_{i-1}$ and $V_i=\langle v_1',\ldots,v_{\gamma_1+ \ldots
+\gamma_i}'\rangle$. Let $g \in \mathrm{GL}_d$ such that $g(v_i)=v_i'$, then $g
\mathfrak{f}_\gamma=\mathfrak{f}$. So this map is bijective.
\end{proof}

For any $m,n \in \N$, let $\mathrm{GL}_d$ act diagonally on the products $\sF_{m,d}
\times \sF_{n,d}$. It can be checked that the bijection shown in \cite{BLM90}
between $\mathrm{GL}_d \backslash \sF_{n,d} \times \sF_{n,d}$ and
$$\Theta_{n,d}=\{(a_{ij}) \in \mathrm{Mat}_{n\times n}(\mathbb{N}) ~|~ \sum_{1 \leq i \leq n,1 \leq
j \leq n} a_{ij}=d\},$$ induces a bijection between $\mathrm{GL}_d \backslash \sF_{m,d}
\times \sF_{n,d}$ and
$$\Theta_{m|n,d}=\{(a_{ij}) \in \mathrm{Mat}_{m \times n}(\mathbb{N}) ~|~ \sum_{1 \leq i
\leq m,1 \leq j \leq n} a_{ij}=d\}.$$
Denote
$$\T_{m|n,d}=\A_{\mathrm{GL}_d}(\sF_{m,d}
\times \sF_{n,d}), \quad \T_{m|n}=\bigoplus_{d=0}^{\infty} \T_{m|n,d},\quad\Sc_{n,d}=\T_{n|n,d}.$$
Here $\Sc_{n,d}$ (with its convolution product) is just the original quantum Schur
algebra introduced by Dipper and James \cite{DJ89}.

\subsection{Explicit action}

For any $A \in \Theta_{n,d}$ (resp. $\Theta_{m,d}$ and $\Theta_{m|n,d}$), let $\chi_A \in
\Sc_{n,d}$ (resp. $\Sc_{m,d}$ and $\T_{m|n,d}$) be the characteristic function of the
$\mathrm{GL}_d$-orbit in $\sF_{n,d} \times \sF_{n,d}$ (resp. $\sF_{m,d} \times \sF_{m,d}$
and $\sF_{m,d} \times \sF_{n,d}$) associated with $A=(a_{ij})$, and set $$[A]:=q^{\sum_{i
\geq k,j<l}a_{ij}a_{kl}} \chi_A.$$
Moreover, for $A\in \Theta_{m|n,d}$, let
\begin{align*}
\mathrm{row}(A)=
(\sum_{j=1}^n a_{1j},\sum_{j=1}^n a_{2j},\ldots,\sum_{j=1}^n a_{mj}),\quad
\mathrm{col}(A)=
(\sum_{i=1}^m a_{i1},\sum_{i=1}^m a_{i2},\ldots,\sum_{i=1}^m a_{in}).
\end{align*}
Similarly, we can define $\mathrm{row}(A)$ and $\mathrm{col}(A)$ for $A\in\Theta_{m,d}$
or $\Theta_{n,d}$.

For any $n \in \mathbb{N}$, denote $[n]=\frac{q^n-q^{-n}}{q-q^{-1}}$ the quantum
integer. Let $E_{ij}\in \mathrm{Mat}_{m\times m}(\mathbb{N})$ or $\mathrm{Mat}_{n\times
n}(\mathbb{N})$ be the matrix whose $(i,j)$-th entry is $1$ and others are $0$.
The following proposition can be obtained by a similar computation to
\cite[Lemma~3.4]{BLM90}.
\begin{prop} \label{action1}
	Let $A=(a_{ij}) \in \Theta_{m|n,d}$.
	\begin{itemize}
		\item[(1)] Assume $B,C \in \Theta_{m,d}$ such that $B-E_{i,i+1}$ and
$C-E_{i+1,i}$ are diagonal. If $\co(B)=\co(C)=\ro(A)$, then
		\begin{align*}
		[B] \cdot [A]&=\sum_{1 \leq j \leq n;a_{i+1,j}>0} q^{\sum_{k>j}
(a_{i+1,k}-a_{ik})} [a_{ij}+1][A+E_{ij}-E_{i+1,j}],\\
		[C] \cdot [A]&=\sum_{1 \leq j \leq n;a_{ij}>0} q^{\sum_{k<j}
(a_{ik}-a_{i+1,k})} [a_{i+1,j}+1][A+E_{i+1,j}-E_{ij}].
		\end{align*}
		\item[(2)] Assume $B,C \in \Theta_{n,d}$ such that $B-E_{i,i+1}$ and
$C-E_{i+1,i}$ are diagonal. If $\ro(B)=\ro(C)=\co(A)$, then
		\begin{align*}
		[A] \cdot [B]&=\sum_{1 \leq j \leq m;a_{ji}>0} q^{\sum_{k<j}
(a_{ki}-a_{k,i+1})} [a_{j,i+1}+1][A+E_{j,i+1}-E_{ji}],\\
		[A] \cdot [C]&=\sum_{1 \leq j \leq m;a_{j,i+1}>0} q^{\sum_{k>j}
(a_{k,i+1}-a_{ki})} [a_{ji}+1][A+E_{ji}-E_{j,i+1}].
		\end{align*}
	\end{itemize}
\end{prop}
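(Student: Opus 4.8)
The plan is to follow the template of \cite[Lemma~3.4]{BLM90}, reducing each product to a point count over a finite field and reading off the structure constants of the convolution product. Work over $\F=\F_\q$ with $\q=p^r$, $p>5$. By the definition of the convolution product, the coefficient of a basis element $[A']$ in $[B]\cdot[A]$ is, up to the normalization powers of $q$, the number $\kappa$ of flags $\mathfrak{f}\in\sF_{m,d}$ with $(\mathfrak{f}_1,\mathfrak{f})$ in the orbit indexed by $B$ and $(\mathfrak{f},\mathfrak{f}_2)$ in the orbit indexed by $A$, for any fixed pair $(\mathfrak{f}_1,\mathfrak{f}_2)$ in the orbit indexed by $A'$; the hypothesis $\co(B)=\ro(A)$ is exactly what makes these two orbits composable, i.e. the type of the intermediate flag $\mathfrak{f}$ in $\sF_{m,d}$ matches. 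So one must (i) describe $\kappa$ explicitly, (ii) determine which $A'$ occur, and (iii) account for the powers of $q$ coming from $[\,\cdot\,]=q^{(\cdot)}\chi_{(\cdot)}$.

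For (i) and (ii): since $B-E_{i,i+1}$ is diagonal, the orbit indexed by $B$ consists of pairs $(\mathfrak{f}_1,\mathfrak{f})$ of $m$-step flags that coincide in every term except the $i$-th, the $i$-th subspaces differing by one dimension inside the fixed ``sandwich'' provided by the $(i-1)$-st and $(i+1)$-st terms; hence choosing $\mathfrak{f}$ amounts to choosing a single line (equivalently, a hyperplane) in a one-dimensional-larger subquotient determined by $\mathfrak{f}_1$. Imposing that $(\mathfrak{f},\mathfrak{f}_2)$ lie in the orbit indexed by $A$ splits these choices according to how the new subspace meets the pieces of the flag $\mathfrak{f}_2$: for each column index $j$ with $a_{i+1,j}>0$ the relative position changes precisely from $A$ to $A+E_{ij}-E_{i+1,j}$ (this also pins down the $A'$ that occur), and the number of admissible choices producing that $j$ is $1+\q+\cdots+\q^{a_{ij}}$, the choices being rigid with respect to all pieces of $\mathfrak{f}_2$ except the $(i,j)$-piece, where one has a Gaussian-binomial's worth of freedom. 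Summing over such $j$ gives the displayed sum before normalization; the formula for $[C]\cdot[A]$ is the mirror image, with $B-E_{i,i+1}$ diagonal replaced by $C-E_{i+1,i}$ diagonal, the enlargement going the other way.

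For (iii): one computes the orbit dimensions $\dim\Ob_B$, $\dim\Ob_A$, $\dim\Ob_{A'}$ together with the normalization exponents $\sum_{s\ge k,\,t<l}(\cdot)_{st}(\cdot)_{kl}$ attached to $B$, $A$, $A'$, and checks that their combination telescopes to the asserted prefactor $q^{\sum_{k>j}(a_{i+1,k}-a_{ik})}$ (resp. $q^{\sum_{k<j}(a_{ik}-a_{i+1,k})}$ in the $C$-case); the leftover power $q^{a_{ij}}$ then converts the unbalanced Gaussian binomial $1+\q+\cdots+\q^{a_{ij}}$ (recall $\q\leftrightarrow q^{-2}$) into the balanced quantum integer $[a_{ij}+1]=\tfrac{q^{a_{ij}+1}-q^{-a_{ij}-1}}{q-q^{-1}}$. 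This $q$-bookkeeping is where essentially all of the effort lies and is the main obstacle; but it is purely the rectangular ($m\times n$) counterpart of the square-matrix computation in \cite[Lemma~3.4]{BLM90}, and since only two consecutive rows (resp. columns) of the matrices enter, the extra columns (resp. rows) are mere spectators and the argument goes through verbatim. Finally, part~(2) is obtained by running the same point count on the second factor of $\sF_{m,d}\times\sF_{n,d}$; equivalently, it is deduced from part~(1) by transporting the formulas along the transpose anti-isomorphism $\T_{m|n,d}\cong\T_{n|m,d}$, which interchanges the left $\Sc_m$-action with the right $\Sc_n$-action and the roles of $E_{i,i+1}$ and $E_{i+1,i}$.
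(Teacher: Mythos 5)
Your overall route is the one the paper intends (its ``proof'' is only a pointer to \cite[Lemma~3.4]{BLM90}): reduce to a point count over $\F_{\q}$ and then match the powers of $q$ coming from the normalization $[A]=q^{\sum_{s\geq k,t<l}a_{st}a_{kl}}\chi_A$; your identification of the orbits that occur, $A'=A+E_{ij}-E_{i+1,j}$ with $a_{i+1,j}>0$, is also correct. The genuine gap is in the count itself. Fix $(\mathfrak{f}_1,\mathfrak{f}_2)\in\Ob_{A'}$, write $\mathfrak{f}_1=(V_\bullet)$, $\mathfrak{f}_2=(U_\bullet)$, and set $W_t=(V_i\cap U_t+V_{i-1})/V_{i-1}\subseteq V_i/V_{i-1}$, so that $\dim W_t/W_{t-1}=a'_{it}$. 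Choosing $\mathfrak{f}$ with $(\mathfrak{f}_1,\mathfrak{f})\in\Ob_B$ means choosing a hyperplane $H=V_i'/V_{i-1}$ of $V_i/V_{i-1}$, and $(\mathfrak{f},\mathfrak{f}_2)\in\Ob_A$ holds exactly when $H\supseteq W_{j-1}$ and $H\not\supseteq W_j$. The number of such hyperplanes is $\q^{\sum_{k>j}a_{ik}}\,(1+\q+\cdots+\q^{a_{ij}})$, not $1+\q+\cdots+\q^{a_{ij}}$: such an $H$ is not determined by $H\cap W_j$, since it can be tilted arbitrarily in the directions of $W_k/W_j$ for $k>j$, an affine freedom of size $\q^{\dim(V_i/V_{i-1})-\dim W_j}=\q^{\sum_{k>j}a_{ik}}$. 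Your justification (``the choices being rigid with respect to all pieces of $\mathfrak{f}_2$ except the $(i,j)$-piece'') is precisely where this factor is dropped. A minimal counterexample: $m=n=2$, $d=2$, $A=E_{12}+E_{21}$, $B=E_{11}+E_{12}$, $A'=E_{11}+E_{12}$, $j=1$; the admissible intermediate flags are the lines $V_1'\neq U_1$ in $\F^2$, so there are $\q$ of them, while your count predicts $1$.

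This error is not absorbed by your step (iii). Writing $\beta_X=\sum_{s\geq k,t<l}x_{st}x_{kl}$, one has $\beta_B=\sum_k a_{ik}$ (from $\co(B)=\ro(A)$) and $\beta_A-\beta_{A'}=\sum_{k>j}a_{i+1,k}-\sum_{k<j}a_{ik}$, hence $\beta_B+\beta_A-\beta_{A'}=a_{ij}+\sum_{k>j}a_{ik}+\sum_{k>j}a_{i+1,k}$. Since $\q=q^{-2}$, the correct count contributes $q^{-2\sum_{k>j}a_{ik}-a_{ij}}[a_{ij}+1]$, and the exponents do combine to the asserted $\sum_{k>j}(a_{i+1,k}-a_{ik})$; with your count they combine to $\sum_{k>j}(a_{i+1,k}+a_{ik})$, contradicting the statement (in the example above, $q$ instead of the correct $q^{-1}$), so the telescoping you assert cannot hold as written. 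The same correction is needed in the $[C]\cdot[A]$ case, where the missing factor is $\q^{\sum_{k<j}a_{i+1,k}}$, and likewise in part (2); restoring these affine factors repairs the argument. One further caution: deducing (2) from (1) ``along the transpose'' is not verbatim, because transposition does not preserve the exponent $\sum_{s\geq k,t<l}a_{st}a_{kl}$ (compare $E_{11}+E_{12}$ with its transpose), so $[A]\mapsto[A^{t}]$ is not automatically a map of standard bases; your alternative of running the point count on the second factor is the safe route.
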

\subsection{Quantum general linear groups}\label{qg}
Let $\qU_n$ denote the quantum group $U_q(\mathfrak{gl}_n)$ of type $\mathrm{A}_{n-1}$
over $\mathbb{A}$ with generators $E_i,F_i (i=1,2,\ldots,n-1)$ and $D_a^{\pm 1} (a=1,2,\ldots,n)$, subject to the following relations:
\begin{align*}
  D_aD_a^{-1}=1,\quad &D_{a}^{\pm1}D_{b}^{\pm1}=D_{b}^{\pm1}D_{a}^{\pm1},\\
  D_aE_iD_a^{-1}=q^{\delta_{ai}-\delta_{a+1,i}}E_i,\quad &D_aF_iD_1^{-1}=q^{\delta_{a+1,i}-\delta_{ai}}F_i,\\
  E_iF_j-F_jE_i=&\delta_{ij}\frac{K_i-K_i^{-1}}{q-q^{-1}},\quad\mbox{where $K_i=D_{i}D_{i+1}^{-1}$},\\
  E_iE_j=E_jE_i,\quad &F_iF_j=F_jF_i, \quad(|i-j|>1),\\
  E_i^2E_j+E_jE_i^{2}=(q+q^{-1})E_iE_jE_i, \quad &F_i^2F_j+F_jF_i^{2}=(q+q^{-1})F_iF_jF_i,\quad(|i-j|=1),
\end{align*}

There is a Hopf algebra structure on $\qU_n$ with the comultiplication $\Delta$, the
counit $\ve$, and the antipode $S$ as follows:
\begin{align*}
\Delta(E_i)&=E_i \otimes K_i^{-1} +1 \otimes E_i,\quad
\Delta(F_i)=F_i \otimes 1+K_i \otimes F_i,\quad
\Delta(D_a)=D_a \otimes D_a;\\
\ve(E_i)&=\ve(F_i)=0,\quad \ve(D_a)=1;\\
S(E_i)&=-K_i^{-1}E_i,\quad
S(F_i)=-F_iK_i,\quad
S(D_a)=D_a^{-1}.
\end{align*}

For $\lambda \in \Z^n$, a left (resp. right) $\qU_n$-module $M$ is called a highest
weight module with highest weight $\lambda$ if there exists a nonzero $v_\lambda \in M$
such that
\begin{align*}
E_i v_\lambda &=0,  (\forall 1\leq i<n),\quad
D_j v_\lambda =q^{\lambda_j} v_\lambda,  (\forall 1 \leq j \leq n),\quad
M =\qU_n v_\lambda \\
(\mbox{resp.}\quad
v_\lambda F_i &=0,  (\forall 1 \leq i<n),\quad
v_\lambda D_j =q^{\lambda_j} v_\lambda, (\forall 1 \leq j \leq n),\quad
M =v_\lambda \qU_n).
\end{align*}
The unique irreducible left (resp. right) module with highest weight $\lambda$ is denoted
by $L_\lambda^{[n]}$ (resp. $\widetilde{L}_\lambda^{[n]}$).

\subsection{Geometric construction}
Denote
\begin{align*}
\Theta_{n,d}^\diag=\{A \in \Theta_{n,d} ~|~ \mbox{$A$ is diagonal}\},\quad
\widetilde{\Theta}_n=\{(a_{ij}) \in \mathrm{Mat}_{n \times n}(\mathbb{Z}) ~|~ a_{ij} \geq
0, \forall i \neq j\}.
\end{align*}
Let $\widehat{\mathcal K}$ be the $\mathbb{A}$-space of all formal (possibly infinite)
$\mathbb{A}$-linear combinations $\sum_{A \in \widetilde{\Theta}_n} \kappa_A [A]$ with
certain finite conditions (see \cite[\S5.1]{BLM90}). The convolution product on
$\Sc_{n,d}$ can be lifted to $\widehat{\mathcal K}$ thanks to the stabilization property
shown in \cite[\S4]{BLM90}. Thus $\widehat{\mathcal K}$ is also an associative
$\mathbb{A}$-algebra. Moreover, it was verified in \cite[\S5.4]{BLM90} that there is an
embedding $\qU_n\hookrightarrow\widehat{\mathcal K}$, which induces a surjective
$\mathbb{A}$-algebra homomorphism $\kappa_{n,d}: \qU_n \twoheadrightarrow
{}_\mathbb{A}\!\Sc_{n,d}$ satisfying
\begin{align}\label{homo:k}
E_i &\mapsto \sum_{Z \in \Theta_{n,d-1}^\diag} [E_{i,i+1}+Z], \quad F_i \mapsto \sum_{Z
\in \Theta_{n,d-1}^\diag} [E_{i+1,i}+Z], \quad (1 \leq i<n), \\\nonumber
D_j &\mapsto \sum_{Z \in \Theta_{n,d}^\diag} q^{z_{jj}}[Z], \quad (1 \leq j \leq n).
\end{align}
The surjective $\mathbb{A}$-algebra homomorphism $\kappa_{m,d}$ (resp. $\kappa_{n,d}$)
implies a left $\qU_m$-action ${}_\mathbb{A}\Phi\circ\kappa_{m,d}$ (resp. right
$\qU_n$-action ${}_\mathbb{A}\Psi\circ\kappa_{n,d}$) on ${}_\mathbb{A}\!\T_{m|n,d}$,
where ${}_\mathbb{A}\Phi$ (resp. ${}_\mathbb{A}\Psi$) denotes the left
${}_\mathbb{A}\!\Sc_{m,d}$-action (resp. right ${}_\mathbb{A}\!\Sc_{n,d}$-action) on
${}_\mathbb{A}\!\T_{m|n,d}$. Thus we have the following double centralizer property for
$\qU_m$ and $\qU_n$ by Theorem~\ref{duality}.
\begin{thm}
	\label{thm:howeA}
	The actions
	$$\qU_m \quad \stackrel{\kappa_{m,d}}\twoheadrightarrow \quad
{}_\mathbb{A}\!\Sc_{m,d} \quad \stackrel{{}_\mathbb{A}\Phi}\curvearrowright \quad
{}_\mathbb{A}\!\T_{m|n,d} \quad \stackrel{{}_\mathbb{A}\Psi}\curvearrowleft \quad
{}_\mathbb{A}\!\Sc_{n,d} \quad \stackrel{\kappa_{n,d}}\twoheadleftarrow \quad \qU_n$$
	satisfy
	$$ {}_\mathbb{A}\Phi\circ\kappa_{m,d}(\qU_m)\cong
\End_{\qU_n}({}_\mathbb{A}\!\T_{m|n,d}), \quad
\End_{\qU_m}({}_\mathbb{A}\!\T_{m|n,d})\cong {}_\mathbb{A}\Psi\circ\kappa_{n,d}(\qU_n).$$
\end{thm}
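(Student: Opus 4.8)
The plan is to deduce Theorem~\ref{thm:howeA} directly from Theorem~\ref{duality}, using the surjectivity of the BLM stabilization maps $\kappa_{m,d}$ and $\kappa_{n,d}$ to transport the double centralizer property for the pair of quantum Schur algebras $({}_\mathbb{A}\!\Sc_{m,d},{}_\mathbb{A}\!\Sc_{n,d})$ across to the pair $(\qU_m,\qU_n)$. The essential observation is that the $\qU_m$-action on ${}_\mathbb{A}\!\T_{m|n,d}$ is by definition the composite ${}_\mathbb{A}\Phi\circ\kappa_{m,d}$, and likewise for $\qU_n$. Thus the image of $\qU_m$ inside $\End_\mathbb{A}({}_\mathbb{A}\!\T_{m|n,d})$ equals the image of ${}_\mathbb{A}\!\Sc_{m,d}$ under ${}_\mathbb{A}\Phi$, because $\kappa_{m,d}$ is \emph{surjective}; symmetrically the image of $\qU_n$ equals ${}_\mathbb{A}\Psi({}_\mathbb{A}\!\Sc_{n,d})$.

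First I would record the set-theoretic identity of subalgebras of endomorphisms: ${}_\mathbb{A}\Phi\circ\kappa_{m,d}(\qU_m)={}_\mathbb{A}\Phi({}_\mathbb{A}\!\Sc_{m,d})$ and ${}_\mathbb{A}\Psi\circ\kappa_{n,d}(\qU_n)={}_\mathbb{A}\Psi({}_\mathbb{A}\!\Sc_{n,d})$, each immediate from surjectivity. Next I would observe that a $\qU_n$-submodule of ${}_\mathbb{A}\!\T_{m|n,d}$ is the same thing as an ${}_\mathbb{A}\!\Sc_{n,d}$-submodule, and a $\qU_n$-linear endomorphism is the same thing as an ${}_\mathbb{A}\!\Sc_{n,d}$-linear endomorphism, again because the $\qU_n$-action factors through the surjection onto ${}_\mathbb{A}\!\Sc_{n,d}$. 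Consequently
$$\End_{\qU_n}({}_\mathbb{A}\!\T_{m|n,d})=\End_{{}_\mathbb{A}\!\Sc_{n,d}}({}_\mathbb{A}\!\T_{m|n,d}),\qquad \End_{\qU_m}({}_\mathbb{A}\!\T_{m|n,d})=\End_{{}_\mathbb{A}\!\Sc_{m,d}}({}_\mathbb{A}\!\T_{m|n,d}).$$
Then Theorem~\ref{duality}, applied with $X_\ff$ and $X_\g$ the type-$\mathrm{A}_{d-1}$ weight sets $X_m$ and $X_n$ (so that ${}_\mathbb{A}\!\Sc_\ff={}_\mathbb{A}\!\Sc_{m,d}$, ${}_\mathbb{A}\!\Sc_\g={}_\mathbb{A}\!\Sc_{n,d}$, ${}_\mathbb{A}\!\Tfg={}_\mathbb{A}\!\T_{m|n,d}$ under Lemma~\ref{variety isomorphism} and the orbit bijection $\Theta_{m|n,d}$), gives ${}_\mathbb{A}\Phi({}_\mathbb{A}\!\Sc_{m,d})\cong\End_{{}_\mathbb{A}\!\Sc_{n,d}}({}_\mathbb{A}\!\T_{m|n,d})$ and $\End_{{}_\mathbb{A}\!\Sc_{m,d}}({}_\mathbb{A}\!\T_{m|n,d})\cong{}_\mathbb{A}\Psi({}_\mathbb{A}\!\Sc_{n,d})$. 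Substituting the two displayed identities yields exactly the claimed statement.

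The only genuine point requiring care—the step I expect to be the main obstacle—is verifying that the geometric setup of Section~\ref{section2} does specialize to the type~A partial flag data of Section~\ref{section3} in a way compatible with \emph{all four} arrows: one must check that the abstract $G$-orbit parametrization by $\Xi_{\ff\g}$ coincides with the matrix parametrization $\Theta_{m|n,d}$, that the convolution products match, and that the left/right actions ${}_\mathbb{A}\Phi,{}_\mathbb{A}\Psi$ are the same maps whether viewed abstractly or via the explicit description. This is essentially bookkeeping (carried out implicitly by Lemma~\ref{variety isomorphism}, the orbit bijections, and Proposition~\ref{action1}), but it is what makes Theorem~\ref{duality} applicable here. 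Once that identification is in place, the proof is a two-line formal consequence of surjectivity plus Theorem~\ref{duality}, and I would present it as such.

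\begin{proof}
Take $X_\ff=X_m$ and $X_\g=X_n$ in the type-$\mathrm{A}_{d-1}$ setting of Section~\ref{section2}. By Lemma~\ref{variety isomorphism} and the orbit bijection identifying $\mathrm{GL}_d\backslash(\sF_{m,d}\times\sF_{n,d})$ with $\Theta_{m|n,d}$, we have ${}_\mathbb{A}\!\Sc_\ff={}_\mathbb{A}\!\Sc_{m,d}$, ${}_\mathbb{A}\!\Sc_\g={}_\mathbb{A}\!\Sc_{n,d}$, ${}_\mathbb{A}\!\Tfg={}_\mathbb{A}\!\T_{m|n,d}$, and ${}_\mathbb{A}\Phi$, ${}_\mathbb{A}\Psi$ are the left ${}_\mathbb{A}\!\Sc_{m,d}$- and right ${}_\mathbb{A}\!\Sc_{n,d}$-actions on ${}_\mathbb{A}\!\T_{m|n,d}$. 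Since $\kappa_{m,d}$ and $\kappa_{n,d}$ are surjective, the $\qU_m$-action ${}_\mathbb{A}\Phi\circ\kappa_{m,d}$ has the same image in $\End_\mathbb{A}({}_\mathbb{A}\!\T_{m|n,d})$ as ${}_\mathbb{A}\Phi$, and likewise ${}_\mathbb{A}\Psi\circ\kappa_{n,d}$ and ${}_\mathbb{A}\Psi$ have the same image; moreover every $\qU_n$-linear (resp. $\qU_m$-linear) endomorphism of ${}_\mathbb{A}\!\T_{m|n,d}$ is ${}_\mathbb{A}\!\Sc_{n,d}$-linear (resp. ${}_\mathbb{A}\!\Sc_{m,d}$-linear) and conversely. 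Hence
$$\End_{\qU_n}({}_\mathbb{A}\!\T_{m|n,d})=\End_{{}_\mathbb{A}\!\Sc_{n,d}}({}_\mathbb{A}\!\T_{m|n,d}),\qquad \End_{\qU_m}({}_\mathbb{A}\!\T_{m|n,d})=\End_{{}_\mathbb{A}\!\Sc_{m,d}}({}_\mathbb{A}\!\T_{m|n,d}).$$
Applying Theorem~\ref{duality} gives ${}_\mathbb{A}\Phi({}_\mathbb{A}\!\Sc_{m,d})\cong\End_{{}_\mathbb{A}\!\Sc_{n,d}}({}_\mathbb{A}\!\T_{m|n,d})$ and $\End_{{}_\mathbb{A}\!\Sc_{m,d}}({}_\mathbb{A}\!\T_{m|n,d})\cong{}_\mathbb{A}\Psi({}_\mathbb{A}\!\Sc_{n,d})$. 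Combining the last three displays yields ${}_\mathbb{A}\Phi\circ\kappa_{m,d}(\qU_m)\cong\End_{\qU_n}({}_\mathbb{A}\!\T_{m|n,d})$ and $\End_{\qU_m}({}_\mathbb{A}\!\T_{m|n,d})\cong{}_\mathbb{A}\Psi\circ\kappa_{n,d}(\qU_n)$, as claimed.
\end{proof}
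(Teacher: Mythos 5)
Your proposal is correct and follows essentially the same route as the paper, which states Theorem~\ref{thm:howeA} as an immediate consequence of Theorem~\ref{duality} once one observes that the $\qU_m$- and $\qU_n$-actions are defined as ${}_\mathbb{A}\Phi\circ\kappa_{m,d}$ and ${}_\mathbb{A}\Psi\circ\kappa_{n,d}$ with $\kappa_{m,d}$, $\kappa_{n,d}$ surjective, so images and centralizers coincide with those of the quantum Schur algebras. Your explicit check that the abstract setting of Section~\ref{section2} specializes to the type~A flag data (via Lemma~\ref{variety isomorphism} and the bijection with $\Theta_{m|n,d}$) is a reasonable bookkeeping step that the paper leaves implicit.
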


\begin{rem}
	The above construction was firstly achieved in \cite{Ba07} (see also \cite{W01} for a non-quantized version).
\end{rem}

\subsection{Quantum coordinate algebra}\label{qca}

Let
$$\qU_n^\circ:=\{f \in \qU_n^* ~|~ \mbox{$\Ker f$ contains a cofinite ideal of
$\qU_n$}\}$$
denote the cofinite dual of $\qU_n$, which is equipped with a Hopf algebra structure
induced by the one of $\qU_n$.
Let $\mathbb{A}^n$ be the natural representation of $\qU_n$ with a standard basis $\{v_a
~|~ 1 \leq i \leq n\}$. That is,
\begin{equation}\label{naturalmoduleA}
E_i v_{k+1}=\delta_{ik} v_k, \quad F_i v_k=\delta_{ik} v_{k+1}, \quad D_j
v_k=q^{\delta_{jk}} v_k.
\end{equation}
Denote by $t_{ij} \in \qU_n^*$ $(1 \leq i,j \leq n)$, the matrix coefficients of the
$\qU_n$-module $\mathbb{A}^n$ relative to the above standard basis, i.e.,
$$xv_j=\sum_{i} v_i \langle t_{ij},x\rangle, \quad (\forall x \in \qU_n),$$
where $\langle \cdot,\cdot \rangle$ is the dual space pairing. Clearly, $t_{ij} \in
\qU_n^\circ$.
It is obvious by definition that
\begin{align}\label{tije}
\langle t_{jk},E_i\rangle=&\left\{\begin{array}{ll}
1, & \mbox{if $i=j=k-1$}\\
0, & \mbox{otherwise}
\end{array}\right.,
\quad
\langle t_{jk},F_i\rangle=\left\{\begin{array}{ll}
1, & \mbox{if $i=k=j-1$}\\
0, & \mbox{otherwise}
\end{array}\right.,\\ \nonumber
\langle t_{jk},D_i\rangle=&\left\{\begin{array}{ll}
q, & \mbox{if $i=j=k$}\\
1, & \mbox{if $i\neq j=k$}\\
0, & \mbox{otherwise}.
\end{array}\right.
\end{align}
The Hopf algebra structure on $\qU_n^\circ$ implies that the product $t_{i_1 j_1} \ldots
t_{i_d j_d}$ is the matrix coefficient of $(\mathbb{A}^n)^{\otimes d}$ such that
$$x(v_{j_1} \otimes \ldots \otimes v_{j_d})=\sum_{i_1,\ldots,i_d}v_{i_1} \otimes \ldots
\otimes v_{i_d}\langle t_{i_1 j_1} \ldots t_{i_d j_d},x\rangle, \quad(\forall
x\in\qU_n),$$ and that the comultiplication $\Delta^\circ$ of $\qU_n^\circ$ satisfies
\begin{equation}\label{comultioft}
\Delta^\circ(t_{ij})=\sum_k t_{ik} \otimes t_{kj}.
\end{equation}
Let $\mathcal{T}_n$ be the subbialgebra of $\qU_n^\circ$ generated by $t_{ij}$ $(1\leq
i,j\leq n)$, which is called the \emph{quantum coordinate algebra} of $\qU_n$.
Thanks to the Schur-Jimbo duality (i.e. the double centralizer property between
${}_\mathbb{A}\!\HH(\mathfrak{S}_d)$ and $\qU_n$ on $(\mathbb{A}^n)^{\otimes d}$), we can
obtain that, for $1\leq i<k\leq n,1\leq j<l \leq n$,
\begin{align}\label{tij}
t_{ij} t_{kj}=qt_{kj} t_{ij},\quad
t_{ij} t_{il}=qt_{il} t_{ij},\quad
t_{il} t_{kj}=t_{kj} t_{il},\quad
t_{ij} t_{kl}=t_{kl} t_{ij}+(q-q^{-1})t_{il} t_{kj}.
\end{align}

We shall use the lexicographical order $<$ on $\mathbb{Z}^2$, i.e.,
$$(i,j)<(k,l) \quad\Leftrightarrow\quad i<k \mbox{ or } i=k, j<l.$$

Denote $$\Theta_{n}=\mathrm{Mat}_{n\times n}(\mathbb{N})=\bigsqcup_{d=0}^\infty \Theta_{n,d}.$$
For $A \in \Theta_n$, set $t^{(A)}=\prod_{1\leq i,j\leq n}^{<}(t_{ij})^{a_{ij}}$, where the product is arranged in the way that $t_{ij}$ is positioned in front of $t_{kl}$ if $(i,j)<(k,l)$.
There is another order $<'$ on $\mathbb{Z}^2$ as follows:
$$(i,j)<'(k,l) \quad\Leftrightarrow\quad j<l \mbox{ or } j=l, i<k,$$
by which we can also set $t'^{(A)}=\prod_{1 \leq i,j \leq n}^{<'} (t_{ij})^{a_{ij}}$ for any $A \in \Theta_n$ in a similar way. The following lemma will be used to derive the last two formulas in Proposition~\ref{actiononta}.
\begin{lem}\label{t=tprime}
	For any $A \in \Theta_n$, we have $t^{(A)}=t'^{(A)}$.
\end{lem}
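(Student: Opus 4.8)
The plan is to show that reordering the product $\prod (t_{ij})^{a_{ij}}$ from the lexicographic order $<$ (row-by-row) to the order $<'$ (column-by-column) does not change the element of $\mathcal{T}_n$, and the key point is that the only transpositions one ever needs to perform in passing between these two linear extensions involve pairs $(i,j)$ and $(k,l)$ with $i<k$ and $j<l$ — the ``strictly northeast'' pairs — for which the commutation relation \eqref{tij} reads $t_{ij}t_{kl}=t_{kl}t_{ij}+(q-q^{-1})t_{il}t_{kj}$. First I would set up a straightening argument: sort the letters of $t^{(A)}$ into $<'$-order by successively swapping adjacent out-of-order pairs of generators, exactly as in a bubble-sort. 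An adjacent pair $t_{kl}t_{ij}$ that is out of $<'$-order but in $<$-order must satisfy $(i,j)<(k,l)$ and $(k,l)<'(i,j)$; unwinding the definitions, $(i,j)<(k,l)$ forces $i<k$ or ($i=k$, $j<l$), while $(k,l)<'(i,j)$ forces $l<j$ or ($l=j$, $k<i$). The only consistent case is $i<k$ and $l<j$, i.e. $k>i$ and $l<j$; relabelling, these are precisely pairs of the form handled by the fourth relation in \eqref{tij} (after swapping names, $t_{ij}t_{kl}$ with $1\le i<k\le n$, $1\le l<j\le n$, so the ``mixed'' term $t_{il}t_{kj}$ has $i<k$, $l<j$ too).

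Second, the crux is to argue that the correction terms $(q-q^{-1})t_{il}t_{kj}$ produced by these swaps all cancel, or equivalently that $t^{(A)}-t'^{(A)}$ vanishes. The cleanest route is induction on a suitable statistic — for instance, the number of ``inversions'' of $A$ relative to the two orders, or simply $\sum_{i<k,\,j>l} a_{ij}a_{kl}$, which counts the northeast/southwest incompatible pairs of letters. When this statistic is zero the two orders agree on the support of $A$ and there is nothing to prove. For the inductive step, I would swap one adjacent out-of-order pair: this replaces $t^{(A)}$ by $t'$-closer word plus a term $(q-q^{-1}) t^{(A')}$ where $A' = A - E_{ij} - E_{kl} + E_{il} + E_{kj}$ (with $i<k$, $l<j$), and one checks that $A'$ has strictly smaller statistic (the new pair $(i,l),(k,j)$ has $i<k$ and $l<j$, hence is \emph{not} northeast/southwest incompatible). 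By induction $t^{(A')}=t'^{(A')}$, and crucially the same reduction applied to the word written in $<'$-order produces exactly the same correction term, so the two sides change by the same amount and the equality $t^{(A)}=t'^{(A)}$ propagates.

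Alternatively — and this may in fact be the slickest argument — one can invoke the Schur--Jimbo duality directly, as was already used to derive \eqref{tij}: $\mathcal{T}_{n,d}$ is (anti)isomorphic to a space of matrix coefficients on $(\mathbb{A}^n)^{\otimes d}$, and both $t^{(A)}$ and $t'^{(A)}$ are, up to a common scalar, the matrix coefficient picking out the same weight vector $v_{\mathbf{i}_\gamma}$ in $(\mathbb{A}^n)^{\otimes d}$ paired against the same covector, because reordering the tensor factors by a permutation of $\{1,\dots,d\}$ is an $\mathcal{H}(\mathfrak{S}_d)$-module map and the relevant vectors lie in a one-dimensional weight space fixed (up to scalar) by the parabolic Hecke subalgebra. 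Comparing the leading coefficient (the coefficient of the ``identity'' basis element, which is $1$ in both $t^{(A)}$ and $t'^{(A)}$ by the triangularity of \eqref{tij}) then forces the scalar to be $1$ and hence $t^{(A)}=t'^{(A)}$.

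The main obstacle I anticipate is bookkeeping: making the bubble-sort/induction genuinely rigorous requires care that \emph{every} adjacent out-of-order swap really is of northeast/southwest type (so that \eqref{tij}'s last relation applies and no other, more complicated relation is needed), and that the chosen statistic strictly decreases under the correction term while the ``sorting'' part makes monotone progress toward $<'$-order. I expect the combinatorial argument to go through cleanly once the statistic is correctly chosen, but the matrix-coefficient argument, while conceptually transparent, hides its difficulty in justifying that the two words correspond to the \emph{same} matrix coefficient — that justification ultimately rests on the same straightening relations, so it is not logically cheaper, only more conceptual.
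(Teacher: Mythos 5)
Your proposal contains a genuine error at its core: you have misidentified which relation in \eqref{tij} governs the pairs that must be transposed. Pairs of positions $(i,j)$, $(k,l)$ with $i<k$ \emph{and} $j<l$ are ordered the same way by $<$ and by $<'$, so they never need to be swapped at all; the pairs whose relative order differs between the two orders are exactly those with $i<k$ and $j>l$ (or symmetrically $i>k$, $j<l$), as your own unwinding in the first paragraph correctly finds. But such a pair is governed by the \emph{third} relation in \eqref{tij}, namely $t_{il}t_{kj}=t_{kj}t_{il}$ (for $i<k$, $j<l$), i.e.\ two generators with strictly smaller row and strictly larger column commute on the nose; after renaming the column indices this is precisely your case $i<k$, $l<j$. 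It is \emph{not} the fourth relation, which requires both indices to increase and is never invoked when passing between the two orders. Consequently no correction terms $(q-q^{-1})t_{il}t_{kj}$ ever arise, and your entire second paragraph --- the inversion statistic, the induction, and the claim that ``the same reduction applied to the word written in $<'$-order produces exactly the same correction term'' --- is solving a problem that does not exist; moreover, that cancellation step is asserted rather than proved, so even on its own terms it would not be a complete argument. The paper's proof is exactly the observation you missed: every adjacent swap needed to pass from the $<$-sorted word to the $<'$-sorted word involves a commuting pair, hence $t^{(A)}=t'^{(A)}$ immediately.

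Your alternative argument via Schur--Jimbo duality and matrix coefficients is too vague to stand as a proof (the identification of the two words with ``the same matrix coefficient'' is precisely what needs proving), and as you concede it ultimately rests on the same straightening relations. If you repair the first paragraph by invoking the third relation instead of the fourth, the bubble-sort argument becomes a two-line verification and coincides with the paper's proof.
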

\begin{proof}
	If ``$i<k$ and $j>l$'' or ``$i>k$ and $j<l$'', it always holds that $t_{ij}t_{kl}=t_{kl}t_{ij}$ by the third equation in \eqref{tij}; otherwise, we have that $(i,j) \leq (k,l)$ if and only if $(i,j) \leq' (k,l)$. Thus $t^{(A)}=t'^{(A)}$.
\end{proof}

It is known (cf. \cite{Z02}) that $\{t^{(A)} ~|~ A \in \Theta_n\}$ is an
$\mathbb{A}$-basis of $\mathcal{T}_n$ (called \emph{monomial basis}), and that $\mathcal{T}_n$ forms
a left module algebra and a right module algebra \footnote{For a bialgebra $B$, an algebra $A$ is called a left (resp. right) module algebra over $B$ if
	\begin{itemize}
		\item $A$ is a left (resp. right) $B$-module, and
		\item the multiplication of $A$ is a $B$-module homomorphism from $A \otimes A$ to
		$A$.
\end{itemize}} over $\qU_n$ by, respectively, the following left action and right
action:

\begin{align}\label{actionont}
x \cdot f:=\sum_{(f)} f_{(1)}\langle f_{(2)},x\rangle,\qquad
f \cdot x:=\sum_{(f)} \langle f_{(1)},x\rangle f_{(2)},
\end{align}
where $x \in \qU_n, f \in \mathcal{T}_n$ and $\Delta^\circ(f)=\sum_{(f)} f_{(1)} \otimes
f_{(2)}$.

The $\qU_n$-action on $\mathcal{T}_n$ are formulated
explicitly  in the following proposition.
\begin{prop}\label{actiononta}
	Let $A \in \Theta_n$. For $E_i, F_i\in \qU_n$, we have
	\begin{align*}
	E_i \cdot t^{(A)}&=\sum_{1 \leq j \leq n;a_{j,i+1}>0} q^{\sum_{k>j}
		(a_{k,i+1}-a_{ki})} [a_{j,i+1}]t^{(A+E_{ji}-E_{j,i+1})}, \\
	F_i \cdot t^{(A)}&=\sum_{1 \leq j \leq n;a_{ji}>0} q^{\sum_{k<j} (a_{ki}-a_{k,i+1})}
	[a_{ji}]t^{(A+E_{j,i+1}-E_{ji})};
	\end{align*}
	and for $E_i, F_i\in \qU_m$, we have
	\begin{align*}
	t^{(A)} \cdot E_i&=\sum_{1 \leq j \leq n;a_{ij}>0} q^{\sum_{k \geq j}
		(a_{i+1,k}-a_{ik})+1} [a_{ij}] t^{(A+E_{i+1,j}-E_{ij})},\\
	t^{(A)} \cdot F_i&=\sum_{1 \leq j \leq n;a_{i+1,j}>0} q^{\sum_{k \leq j}
		(a_{ik}-a_{i+1,k})+1} [a_{i+1,j}] t^{(A+E_{ij}-E_{i+1,j})}.
	\end{align*}
\end{prop}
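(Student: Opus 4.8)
The plan is to compute each of the four action formulas directly from the coalgebra definition \eqref{actionont}, reducing everything to the known action of $E_i,F_i$ on a single tensor factor and then tracking how the iterated comultiplication \eqref{comultioft} distributes over the monomial $t^{(A)}$. First I would fix the combinatorial dictionary: writing $\mi_A=(i_1,\dots,i_d)$ and $\mj_A=(j_1,\dots,j_d)$ for the row- and column-index sequences read off from $A$ in the order dictated by the monomial ordering (so that $t^{(A)}=t_{i_1 j_1}\cdots t_{i_d j_d}$), the element $t^{(A)}$ is the matrix coefficient of $(\mathbb{A}^n)^{\otimes d}$ against the basis vector $v_{\mj_A}:=v_{j_1}\otimes\cdots\otimes v_{j_d}$. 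Under this identification, $E_i\cdot t^{(A)}$ is obtained by applying $\Delta^\circ$ repeatedly, evaluating the last leg against $E_i$, and summing; since $\langle t_{jk},E_i\rangle$ is nonzero only when $(j,k)=(i,i+1)$ by \eqref{tije}, the computation collapses to: locate each position $p$ with $j_p=i+1$, change that factor $t_{i+1,\,j_p}\leadsto t_{i,\,j_p}$ (equivalently $A\leadsto A+E_{i,i+1}-\text{nothing}$... more precisely $A+E_{ji}-E_{j,i+1}$ for the appropriate row $j$), and collect the resulting $q$-power coming from the $K_i^{\pm1}$ in $\Delta(E_i)=E_i\otimes K_i^{-1}+1\otimes E_i$.

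The heart of the argument is the bookkeeping of that $q$-power and the resulting quantum integer. Because $\Delta(E_i)=E_i\otimes K_i^{-1}+1\otimes E_i$, when we expand $\Delta^{(d-1)}(E_i)$ the term hitting the $p$-th tensor leg carries a factor $K_i^{-1}$ on every leg to its right (or left, for the right action, depending on the coproduct convention), which evaluates on $v_{j_{p'}}$ to a power of $q$ determined by whether $j_{p'}$ equals $i$ or $i+1$; grouping all positions $p$ in a fixed column block $j$ together, the $q$-exponents telescope into exactly $\sum_{k>j}(a_{k,i+1}-a_{ki})$, and the number of such positions times the appropriate $q$-shift assembles into the quantum integer $[a_{j,i+1}]$ rather than a bare integer — this last step uses that within a column the $t_{kj}$'s with varying $k$ $q$-commute according to the first two relations in \eqref{tij}, so reordering the newly created factor into standard monomial position produces precisely the missing $q$-powers that upgrade $a_{j,i+1}$ to $[a_{j,i+1}]$. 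The right-action formulas for $E_i,F_i\in\qU_m$ are handled the same way but evaluating the \emph{first} tensor leg (since $f\cdot x=\sum\langle f_{(1)},x\rangle f_{(2)}$), which is why the summation range and the $q$-shift involve $\sum_{k\geq j}(a_{i+1,k}-a_{ik})+1$ with the extra $+1$ coming from the diagonal contribution $\langle t_{jj},D_?\rangle=q$ bundled into $K_i$; Lemma~\ref{t=tprime} is invoked here to re-express $t^{(A)}$ in the $<'$-ordering adapted to reading off columns, which makes the right-action computation formally parallel to the left one.

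The main obstacle I anticipate is \emph{not} the structural idea but the sign- and convention-tracking in the telescoping of $q$-exponents: one must be scrupulous about (i) which side $K_i^{-1}$ lands on in $\Delta(E_i)$ versus $\Delta(F_i)$, (ii) the difference between the lexicographic order $<$ and the column order $<'$ when pulling the modified generator past its neighbours, and (iii) verifying that the partial sums $\sum_{k>j}(a_{k,i+1}-a_{ki})$ (as opposed to $\sum_{k<j}$, etc.) emerge with the correct direction — a single transposition error flips the inequality and breaks the match with Proposition~\ref{action1}. A clean way to de-risk this is to first prove the $d=1$ case by hand (where $t^{(A)}=t_{ij}$ is a single generator and the formulas reduce to \eqref{tije}), then induct on $d$ by splitting $t^{(A)}=t^{(A')}t_{i_dj_d}$ and applying the module-algebra (Leibniz) rule $x\cdot(fg)=\sum_{(x)}(x_{(1)}\cdot f)(x_{(2)}\cdot g)$, which turns the whole computation into an induction driven entirely by the two-term coproducts of $E_i$ and $F_i$ and the $q$-commutation relations \eqref{tij}; the inductive step is then a finite check that the $q$-powers accumulate additively into the claimed exponents. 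I expect the comparison with Proposition~\ref{action1} (and hence with the geometric side) to fall out as an immediate corollary once the formulas are in hand, which is presumably the point of stating this proposition here.
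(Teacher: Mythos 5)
Your plan is correct and follows essentially the same route as the paper's proof: compute the generator-on-generator actions from \eqref{tije}, \eqref{comultioft} and \eqref{actionont}, extend to powers and monomials via the module-algebra (Leibniz) rule driven by the two-term coproducts of $E_i$ and $F_i$ (with the $K_i^{\mp1}$ contributions and the $q$-commutation relations \eqref{tij} assembling the exponents and the quantum integers), and invoke Lemma~\ref{t=tprime} to handle the right actions in the $<'$-ordering. The only caveats are cosmetic bookkeeping points you already flag (e.g.\ for the left $E_i$-action the reordering uses the same-row relation $t_{ji}t_{j,i+1}=qt_{j,i+1}t_{ji}$, and the extra $+1$ in the right-action exponent arises from the $k=j$ term once the changed factor is excluded), none of which affects the validity of the approach.
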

\begin{proof}
	By \eqref{tije}, \eqref{comultioft} and \eqref{actionont}, we have
	\begin{align*}
	E_i \cdot t_{j,k+1}=\sum_{l} t_{jl}\langle t_{l,k+1},E_i\rangle=\delta_{ik}t_{jk}.
	\end{align*}
	Similarly, we can compute that
	$$F_i \cdot t_{jk}=\delta_{ik}t_{j,k+1},
	\quad D_i \cdot t_{jk}=q^{\delta_{ik}}t_{jk},$$
	$$t_{jk} \cdot E_i=\delta_{ij}t_{j+1,k},\quad
	t_{j+1,k} \cdot F_i=\delta_{ij}t_{jk},\quad
	t_{jk} \cdot D_i=q^{\delta_{ij}}t_{jk}.$$
	Thus using the comultiplication $\Delta(E_i)=E_i \otimes K_i^{-1} +1 \otimes E_i$, we
	have
	\begin{equation*}
	E_i \cdot t_{j,k+1}^d=\delta_{ik}\sum_{c=1}^d t_{j,k+1}^{c-1}(E_i \cdot
	t_{j,k+1})(K_i^{-1} \cdot t_{j,k+1})^{d-c}=\delta_{ik}\sum_{c=1}^d
	q^{d-c}t_{j,k+1}^{c-1}t_{jk}t_{j,k+1}^{d-c}=[d]t_{jk}t_{j,k+1}^{d-1}
	\end{equation*} and hence
	\begin{align*}
	E_i \cdot t^{(A)}&=\sum_{1 \leq j \leq n} (\prod_{(k,\ell)<(j,i+1)}^{<}
	t_{k\ell}^{a_{k\ell}})(E_i \cdot t_{j,i+1}^{a_{j,i+1}})(K_i^{-1} \cdot
	(\prod_{(k,\ell)>(j,i+1)}^{<} t_{k\ell}^{a_{k\ell}}))\\
	&=\sum_{1 \leq j \leq n;a_{j,i+1}>0} q^{\sum_{k>j} (a_{k,i+1}-k_{ki})}
	[a_{j,i+1}]t^{(A+E_{ji}-E_{j,i+1})}.
	\end{align*}
	The other formulas can be computed similarly. We note that we need Lemma~\ref{t=tprime} to derive the formulas of right actions.
\end{proof}

\subsection{Multiplicity-free decomposition} Let $s=\max\{m,n\}$. Denote
$$\Theta_{m|n}=\mathrm{Mat}_{m\times n}(\mathbb{N})=\bigsqcup_{d=0}^\infty \Theta_{m|n,d},$$ which can be regarded as a subset of $\Theta_s$ by the natural way. In \cite{Z02}, Zhang constructed a
subalgebra $\mathcal{V}_{m|n}$ of $\mathcal{T}_s$ with an $\mathbb{A}$-basis
$\{t^{(A)} ~|~ A\in\Theta_{m|n}\}$.

\begin{thm}\cite[Theorem~1.1]{Z02} \label{thm:zhang}
	The subalgebra $\mathcal{V}_{m|n}$ forms a $(\qU_n, \qU_m)$-module algebra and admits the following
multiplicity-free decomposition:
	$$\mathcal{V}_{m|n} \cong \bigoplus_{\lambda \in \Par_{\min(m,n)}}L_{\lambda}^{[n]}
\otimes \widetilde{L}_\lambda^{[m]},$$
	where $\Par_{\min(m,n)}$ is the set of weights corresponding to partitions with at
most $\min(m,n)$ parts.
\end{thm}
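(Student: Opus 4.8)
The plan is to recall and adapt Zhang's original argument in \cite{Z02}, reorganizing it around the explicit action formulas that we have just made available in Proposition~\ref{actiononta}. First I would verify that $\mathcal{V}_{m|n}$ is stable under both the left $\qU_n$-action and the right $\qU_m$-action: by Proposition~\ref{actiononta}, each generator $E_i,F_i$ moves $t^{(A)}$ to an $\mathbb{A}$-combination of $t^{(A')}$ with $A'$ obtained from $A$ by an elementary column-move (for the left $\qU_n$-action) or row-move (for the right $\qU_m$-action); such moves preserve the shape of the support $\Theta_{m|n}\subset\Theta_s$, so $\mathcal{V}_{m|n}$ is a sub-$(\qU_n,\qU_m)$-bimodule. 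Stability under the Cartan generators $D_a$ is immediate since they act diagonally on the monomial basis. That $\mathcal{V}_{m|n}$ is in addition a module algebra over each of $\qU_n$ and $\qU_m$ follows from the general fact that $\mathcal{T}_s$ is a module algebra (recorded after \eqref{actionont}) together with the fact that $\mathcal{V}_{m|n}$ is a subalgebra, which is Zhang's construction.

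Next I would isolate the highest weight vectors. For the right $\qU_m$-action, a vector killed by all $F_i\in\qU_m$ corresponds, by the formula for $t^{(A)}\cdot F_i$ in Proposition~\ref{actiononta}, to a matrix $A\in\Theta_{m|n}$ with $a_{i+1,j}=0$ whenever applying $F_i$ would produce a nonzero term — i.e. the nonzero rows of $A$ form an initial segment and the column sums are weakly decreasing; dually, killing all $E_i\in\qU_n$ forces the nonzero columns to form an initial segment with weakly decreasing row sums. Running both conditions simultaneously pins down $A$ to be (the matrix of) a partition $\lambda$ with at most $\min(m,n)$ parts sitting in the top-left corner, with $\mathrm{row}(A)=\lambda$ (padded by zeros) and $\mathrm{col}(A)=\lambda$ (padded by zeros). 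Reading off the $D$-eigenvalues then shows the corresponding weight (on both sides) is exactly $\lambda$, so each such $\lambda$ contributes a copy of $L_\lambda^{[n]}\otimes\widetilde{L}_\lambda^{[m]}$.

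To finish I would invoke the double centralizer property. By Theorem~\ref{thm:howeA}, ${}_\mathbb{A}\!\T_{m|n,d}$ is a bimodule for which $\qU_m$ and $\qU_n$ are mutual centralizers (through the surjections $\kappa_{m,d},\kappa_{n,d}$ onto the semisimple algebras ${}_\mathbb{A}\!\Sc_{m,d},{}_\mathbb{A}\!\Sc_{n,d}$), hence it decomposes multiplicity-freely; since $\mathcal{V}_{m|n}\cong\bigoplus_d {}_\mathbb{A}\!\T_{m|n,d}$ as a bimodule (this identification, via matrix coefficients, is part of Zhang's setup and will be made precise in the surrounding text), the decomposition $\mathcal{V}_{m|n}\cong\bigoplus_\lambda V_\lambda\otimes U_\lambda^*$ is forced, and the highest-weight computation above identifies $V_\lambda=L_\lambda^{[n]}$, $U_\lambda^*=\widetilde{L}_\lambda^{[m]}$ and the index set as $\Par_{\min(m,n)}$. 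The main obstacle is the bookkeeping in the previous paragraph: one must check that the joint vanishing conditions genuinely force $A$ to be a staircase-shaped partition matrix with equal row- and column-content — and that no further relations among the $t^{(A)}$ collapse distinct $\lambda$'s — which is exactly where the linear independence of the monomial basis $\{t^{(A)}\mid A\in\Theta_{m|n}\}$ for $\mathcal{V}_{m|n}$ (cited from \cite{Z02}) is needed. Everything else is either a cited semisimplicity/double-centralizer input or a direct substitution into Proposition~\ref{actiononta}.
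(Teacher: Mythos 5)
The paper itself offers no proof of Theorem~\ref{thm:zhang}: it is quoted verbatim from \cite{Z02}, and the closest in-house analogue of what you are attempting is the proof of Theorem~\ref{decomp}, where the multiplicity-free decomposition is deduced from the geometric double centralizer property and the constituents are identified by specializing at $q\to 1$ to a classical duality together with classification lemmas (Lemma~\ref{irrj}, Lemma~\ref{irri}). Your skeleton is viable and genuinely different from ``cite Zhang'': stability of $\mathcal{V}_{m|n}$ under the two actions does follow from Proposition~\ref{actiononta}, and transferring the geometric duality of Theorem~\ref{thm:howeA} through the comparison isomorphism of Theorem~\ref{equivalent} (which is proved by matching explicit formulas and does not use Theorem~\ref{thm:zhang}, so there is no circularity) does give multiplicity-freeness of $\mathcal{V}_{m|n}$ degree by degree.

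The genuine gap is your identification of the joint highest weight vectors. You argue as if a vector killed by all $F_i\in\qU_m$ (resp.\ all $E_i\in\qU_n$) were a single monomial $t^{(A)}$ and read off support conditions on $A$; both halves of this are wrong. For monomials, since the terms $t^{(A+E_{ij}-E_{i+1,j})}$, $1\leq j\leq n$, are linearly independent basis elements, $t^{(A)}\cdot F_i=0$ forces the entire row $i+1$ of $A$ to vanish (not your ``initial segment with weakly decreasing column sums''), so the only monomials annihilated by all $F_i\in\qU_m$ and all $E_i\in\qU_n$ are the powers $t_{11}^d$, which account only for one-row partitions. The actual joint singular vectors are not monomials: already for $\lambda=(1,1)$ one has $E_1\cdot(t_{11}t_{22})=t_{11}t_{21}\neq 0$ by Proposition~\ref{actiononta}, and the correct singular vector is the quantum minor $t_{11}t_{22}-qt_{12}t_{21}$; in general they are products of quantum minors. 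So the step you describe as bookkeeping does not merely need care, it fails as stated. To close the argument you must either construct the quantum-minor highest weight vectors and compute their weights (which is essentially Zhang's original proof in \cite{Z02}, making your route collapse back onto it), or avoid explicit singular vectors entirely and argue as in the proof of Theorem~\ref{decomp}: show each simple constituent is a highest weight module of the relevant polynomial type and pin down the index set and the matching of $\lambda$'s by specializing at $q\to 1$ to the classical $(\mathfrak{gl}_m,\mathfrak{gl}_n)$ Howe duality.
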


For $A \in \Theta_{m|n}$,
denote
\begin{equation}\label{def:varA}
\langle A\rangle:=q^{\sum_{1\leq i\leq m} \frac{\ro_i(A)(\ro_i(A)+1)}{2}}
\frac{t^{(A)}}{\prod_{(i,j)}[a_{ij}]!} \in \mathcal{V}_{m|n},
\end{equation}
where $\ro_i(A)=\sum_{j=1}^n a_{ij}$.

The following corollary is clear by Proposition~\ref{actiononta} and \eqref{def:varA}.
\begin{cor} The set $\{\langle A\rangle ~|~ A\in\Theta_{m|n}\}$ forms an
$\mathbb{A}$-basis of $\mathcal{V}_{m|n}$. The explicit actions of $\qU_m$ and $\qU_n$ on
the basis elements are as follows: for $E_i,F_i\in\qU_n$,
	\begin{align*}
	E_i \cdot \langle A\rangle &= \sum_{1 \leq j \leq m;a_{j,i+1}>0} q^{\sum_{k>j}
(a_{k,i+1}-a_{ki})}[a_{ji}+1]\langle A+E_{ji}-E_{j,i+1}\rangle,\\
	F_i \cdot \langle A\rangle &= \sum_{1 \leq j \leq m;a_{ji}>0} q^{\sum_{k<j}
(a_{ki}-a_{k,i+1})}[a_{j,i+1}+1]\langle A+E_{j,i+1}-E_{ji}\rangle;
\end{align*}
and for $E_i,F_i\in\qU_m$,
\begin{align*}\langle A\rangle \cdot E_i &= \sum_{1 \leq j \leq n;a_{ij}>0} q^{\sum_{k<j}
(a_{ik}-a_{i+1,k})}[a_{i+1,j}+1]\langle A+E_{i+1,j}-E_{ij}\rangle,\\
	\langle A\rangle \cdot F_i &= \sum_{1 \leq j \leq n;a_{i+1,j}>0} q^{\sum_{k>j}
(a_{i+1,b}-a_{ik})}[a_{ij}+1]\langle A+E_{ij}-E_{i+1,j}\rangle.
	\end{align*}
\end{cor}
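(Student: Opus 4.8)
The plan is to deduce the Corollary from Proposition~\ref{actiononta} and the definition \eqref{def:varA} purely by a change of basis. First I would write $\langle A\rangle = c(A)\, t^{(A)}$, where
\[
c(A)=q^{\sum_{1\le i\le m}\ro_i(A)(\ro_i(A)+1)/2}\Big/\prod_{(i,j)}[a_{ij}]!\ \in\ \mathbb{A}.
\]
Since $\mathbb{A}=\K(q)$ is a field and $\K$ has characteristic $0$, each $[k]!$ is nonzero, so $c(A)$ is a unit. Given that $\{t^{(A)}\mid A\in\Theta_{m|n}\}$ is an $\mathbb{A}$-basis of $\mathcal{V}_{m|n}$ by Zhang's construction \cite{Z02}, rescaling each basis vector by the unit $c(A)$ immediately shows that $\{\langle A\rangle\mid A\in\Theta_{m|n}\}$ is again an $\mathbb{A}$-basis; this disposes of the first assertion.

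For the action formulas, I would fix $A$ and, for each generator $E_i$ or $F_i$, let $A'$ be the matrix indexing a summand on the right-hand side of the corresponding formula in Proposition~\ref{actiononta} (thus $A'=A+E_{ji}-E_{j,i+1}$ for $E_i\in\qU_n$, $A'=A+E_{i+1,j}-E_{ij}$ for $E_i\in\qU_m$, and similarly for the $F_i$). Substituting $t^{(A)}=c(A)^{-1}\langle A\rangle$ and $t^{(A')}=c(A')^{-1}\langle A'\rangle$ into Proposition~\ref{actiononta} shows that the coefficient of $\langle A'\rangle$ in $E_i\cdot\langle A\rangle$ (say) equals $\frac{c(A)}{c(A')}$ times the coefficient of $t^{(A')}$ in $E_i\cdot t^{(A)}$; hence the whole problem reduces to computing the scalars $c(A)/c(A')$.

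These ratios are elementary. For the left $\qU_n$-action, $A'$ is obtained from $A$ by moving one unit within a single row, so all row sums $\ro_i$ are preserved and the $q$-power factor of $c(A)/c(A')$ is trivial; only the factorial factor survives, producing $c(A)/c(A')=[a_{ji}+1]/[a_{j,i+1}]$ for $E_i$ (resp.\ $[a_{j,i+1}+1]/[a_{ji}]$ for $F_i$), which replaces the factor $[a_{j,i+1}]$ (resp.\ $[a_{ji}]$) of Proposition~\ref{actiononta} by $[a_{ji}+1]$ (resp.\ $[a_{j,i+1}+1]$) while leaving the $q$-exponent unchanged, exactly as the Corollary asserts. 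For the right $\qU_m$-action, $A'$ moves one unit between rows $i$ and $i+1$ (within a single column), so one of $\ro_i,\ro_{i+1}$ increases by $1$ and the other decreases by $1$; a short telescoping of the quadratic row-sum exponents in $c$ then contributes an extra power of $q$ (of the form $q^{\pm(\ro_{i+1}(A)-\ro_i(A))\mp 1}$), while the factorial factor again turns one quantum integer into its ``$+1$'' neighbour. Feeding these into the formulas of Proposition~\ref{actiononta} and using $\ro_i(A)-\ro_{i+1}(A)=\sum_k(a_{ik}-a_{i+1,k})$, the total $q$-exponent collapses: a partial sum over $\{k\ge j\}$ (resp.\ $\{k\le j\}$) is replaced by the complementary partial sum over $\{k<j\}$ (resp.\ $\{k>j\}$), giving precisely the exponents in the Corollary.

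The only real bookkeeping — the step I would be most careful about — is exactly this last recombination for the $\qU_m$-action: I must check that the extra power of $q$ coming from the change of row sums in $c$ is precisely what turns the half-range partial sum of Proposition~\ref{actiononta} into its complementary half-range partial sum. Everything else is routine substitution, and nothing beyond Proposition~\ref{actiononta}, \eqref{def:varA}, and Zhang's basis of $\mathcal{V}_{m|n}$ is needed.
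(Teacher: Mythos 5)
Your proposal is correct and is exactly the argument the paper intends: the paper offers no proof beyond declaring the corollary ``clear'' from Proposition~\ref{actiononta} and \eqref{def:varA}, and your unit-rescaling $\langle A\rangle=c(A)\,t^{(A)}$ with the ratio computation $c(A)/c(A')$ fills in precisely those details, including the key point that the row sums are untouched by the $\qU_n$-action but shift by $\pm1$ under the $\qU_m$-action. One small correction to your parenthetical: the extra power of $q$ from the row-sum telescoping is $q^{\ro_i(A)-\ro_{i+1}(A)-1}$ for $E_i$ and $q^{\ro_{i+1}(A)-\ro_i(A)-1}$ for $F_i$ (the shift is $-1$ in both cases, not ``$\mp1$''), which is exactly what cancels the ``$+1$'' in the exponents of the right-action formulas of Proposition~\ref{actiononta} and turns the partial sums over $\{k\ge j\}$, $\{k\le j\}$ into the complementary sums over $\{k<j\}$, $\{k>j\}$ as you claim.
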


Let $\mathcal{V}_{m|n,d}$ be the subspace of $\mathcal{V}_{m|n}$ spanned by $\{t^{(A)}~|~A\in\Theta_{m|n,d}\}$.
Comparing the above corollary with Proposition~\ref{action1} together with the
homomorphism $\kappa_{n,d}$ in \eqref{homo:k}, we get the following isomorphism of
$(\qU_n, \qU_m)$-modules.
\begin{thm} \label{equivalent}
	There are isomorphisms between the $(\qU_n, \qU_m)$-modules:
	\begin{align*}
	\mathcal{V}_{m|n} \cong {}_\mathbb{A}\!\T_{n|m}, \quad \mathcal{V}_{m|n,d} \cong {}_\mathbb{A}\!\T_{n|m,d}:\quad \langle A\rangle \mapsto [A'],
	\end{align*}
	where $A'$ is the transposition of $A$.
\end{thm}
Thanks to Theorems~\ref{thm:zhang} \& \ref{equivalent}, we obtain the following result
which is a graded version of \cite[Theorem~1.1]{Z02}.
\begin{thm}
	As a $(\qU_m,\qU_n)$-module, we have
	$${}_\mathbb{A}\!\T_{m|n,d}= \bigoplus_{\lambda \in \Par_{\min(m,n)}(d)}
L_{\lambda}^{[m]} \otimes \widetilde{L}_\lambda^{[n]},$$
where $\Par_{\min(m,n)}$ is the set of weights corresponding to partitions of $d$ with at
most $\min(m,n)$ parts.
\end{thm}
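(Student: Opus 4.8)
The plan is to deduce this graded decomposition directly from the two preceding theorems by extracting the degree-$d$ piece. First I would recall from Theorem~\ref{equivalent} that the map $\langle A\rangle\mapsto[A']$ (transposition) gives an isomorphism $\mathcal{V}_{m|n}\cong{}_\mathbb{A}\!\T_{n|m}$ of $(\qU_n,\qU_m)$-modules, and moreover that it restricts to an isomorphism $\mathcal{V}_{m|n,d}\cong{}_\mathbb{A}\!\T_{n|m,d}$ on each graded piece, since transposition sends $\Theta_{m|n,d}$ bijectively to $\Theta_{n|m,d}$ (the total sum of entries is preserved). Swapping the roles of $m$ and $n$, this says ${}_\mathbb{A}\!\T_{m|n,d}\cong\mathcal{V}_{n|m,d}$ as $(\qU_m,\qU_n)$-modules.

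Next I would invoke Theorem~\ref{thm:zhang}: $\mathcal{V}_{n|m}\cong\bigoplus_{\lambda\in\Par_{\min(m,n)}}L_\lambda^{[m]}\otimes\widetilde{L}_\lambda^{[n]}$ as a $(\qU_m,\qU_n)$-module. The key point is that this decomposition is graded: the summand of $\mathcal{V}_{n|m}$ indexed by a partition $\lambda$ of $d$ lives entirely in $\mathcal{V}_{n|m,d}$. Indeed, the highest weight vector generating the copy of $L_\lambda^{[m]}\otimes\widetilde{L}_\lambda^{[n]}$ is a monomial $t^{(A)}$ (up to scalar) with $\mathrm{row}(A)$ and $\mathrm{col}(A)$ determined by $\lambda$, so $\sum a_{ij}=|\lambda|=d$; and the explicit action formulas in the Corollary following Proposition~\ref{actiononta} show that the generators $E_i,F_i$ of $\qU_m$ and $\qU_n$ preserve $\sum a_{ij}$ (each application of $E_i$ or $F_i$ moves one unit of mass between adjacent entries in a row or column, keeping the total fixed), while $D_j$ acts diagonally. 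Hence the whole $(\qU_m,\qU_n)$-submodule $L_\lambda^{[m]}\otimes\widetilde{L}_\lambda^{[n]}$ sits inside $\mathcal{V}_{n|m,d}$. Intersecting the global decomposition with the degree-$d$ subspace therefore yields $\mathcal{V}_{n|m,d}\cong\bigoplus_{\lambda\in\Par_{\min(m,n)}(d)}L_\lambda^{[m]}\otimes\widetilde{L}_\lambda^{[n]}$.

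Combining the two displayed isomorphisms gives exactly ${}_\mathbb{A}\!\T_{m|n,d}\cong\bigoplus_{\lambda\in\Par_{\min(m,n)}(d)}L_\lambda^{[m]}\otimes\widetilde{L}_\lambda^{[n]}$ as a $(\qU_m,\qU_n)$-module, which is the claim. I do not expect any real obstacle here: the only thing that needs care is the book-keeping of which factor of the tensor product carries the left versus the right action after the transposition swap, i.e.\ making sure that under $\langle A\rangle\leftrightarrow[A']$ the left $\qU_n$-action on $\mathcal{V}_{m|n}$ matches the right $\qU_n$-action on ${}_\mathbb{A}\!\T_{n|m}$ and vice versa, so that after relabelling $m\leftrightarrow n$ the left factor is the $\qU_m$-module $L_\lambda^{[m]}$ and the right factor is the $\qU_n$-module $\widetilde{L}_\lambda^{[n]}$; this is exactly the content of Theorem~\ref{equivalent} together with Theorem~\ref{thm:zhang}, so it is a matter of careful transcription rather than new argument. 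The grading-compatibility of Zhang's decomposition, which is the substantive input, follows from the weight considerations above together with the fact that $\mathcal{T}_s$ (hence $\mathcal{V}_{n|m}$) is graded by total degree of monomials in the $t_{ij}$ and the $\qU$-actions are degree-preserving.
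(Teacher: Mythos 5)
Your proposal is correct and follows essentially the same route as the paper, which deduces this graded statement directly from Theorem~\ref{thm:zhang} and Theorem~\ref{equivalent} (the paper states it with no further argument). Your additional verification that Zhang's decomposition is compatible with the grading by total degree of the $t_{ij}$-monomials, since the $\qU_m$- and $\qU_n$-actions preserve $\sum a_{ij}$, is exactly the book-keeping the paper leaves implicit.
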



\section{$\imath$Howe duality for $\imath$quantum groups of type
AIII/IV}\label{section4}
In this section, since a symmetric (resp. skew-symmetric) bilinear form on $\mathbb{F}^{2d+1}$ (resp. $\mathbb{F}^{2d}$) will be employed, we shall always assume $\mathrm{char}(\mathbb{F})\neq2$ for the finite field $\mathbb{F}$. This restriction is not essential and can be removed if we use a combinatorial approach (cf. \cite{LL21}) instead.
\subsection{Weights and orbits of type $\mB_d$}\label{weightB}
We fix $m,n\in\mathbb{N}$ and let
$$N=2n+1,\quad M=2m+1,\quad D=2d+1.$$
Let $\mathrm{SO}_D$ be the special orthogonal group whose natural module $\F^D$ is equipped with a
non-degenerate symmetric bilinear form $(\cdot,\cdot)$ satisfying $(v_i,v_j)=\delta_{i,-j}$ for a given basis $\{v_{-d},\ldots,v_{d}\}$. The weight lattice
for $\mathrm{SO}_D$ is
$X=X^0 \sqcup X^{\half}$ where
$$X^0= \sum_{i=1}^d \mathbb{Z} \delta_i, \qquad X^{\half}= \sum_{i=1}^d(\half+\mathbb{Z})
\delta_i.$$
The Weyl group $W_{B_d}= \mathfrak{S}_d \ltimes \mathbb{Z}_2^d$ acts on $X$ by
permutating $\delta_i$ and changing the signs of coefficients of $\delta_i$.
We take
\begin{align*}
X_n^0=\{\sum_{i=1}^d a_i\delta_i ~|~ a_i \in \mathbb{Z},|a_i| \leq n, \forall i\},\quad
X_n^{\half}=\{\sum_{i=1}^d a_i\delta_i ~|~ a_i \in \half+\mathbb{Z},|a_i|< n, \forall
i\}
\end{align*}
and denote
\begin{align*}
\Lambda_{n,d}^\jmath=&\{\gamma=(\gamma_{-n},\ldots,\gamma_{-1},2\gamma_0+1,\gamma_1,\ldots,\gamma_n)
~|~  \sum_{i=0}^n \gamma_i=d,\gamma_i=\gamma_{-i}\},\\
\Lambda_{n,d}^\imath=&\{\gamma=(\gamma_{-n},\ldots,\gamma_{-1},1,\gamma_1,\ldots,\gamma_n)\in\Lambda_{n,d}^\jmath\}\subset \Lambda_{n,d}^\jmath.
\end{align*}
Each $W_{B_d}$-orbit in $X_n^0$ can be indexed by the set $\Lambda_{n,d}^\jmath$, while each
$W_{B_d}$-orbit in $X_n^\half$ can be indexed by $\Lambda_{n,d}^\imath$. Precisely, an orbit
$\gamma\in \Lambda_{n,d}^\jmath$ consists of all
weights $\sum_{i=1}^d a_i\delta_i \in X_n^0$ such that
$$\gamma_k=\sharp \{i ~|~  |a_i|=k, i=1,\ldots,d\}, \quad (k=0,1,2,\ldots,n),$$
while an orbit
$\gamma \in \Lambda_{n,d}^\imath$ consists of all weights
$\sum_{i=1}^d a_i\delta_i \in X_n^\half$ such that
$$\gamma_k=\sharp \{i ~|~  |a_i|=k-\half, i=1,\ldots,d\}, \quad (k=1,2,\ldots,n).$$

\subsection{Flag varieties of type $\mB$}
Denote
\begin{align*}
\sF_{n,d}^{\mB,\jmath}=&\{\mathfrak{f}=(0=V_{-n-\half}\subset V_{-n+\half}\subset\cdots\subset
V_{n+\half}=\F^D) \in \sF_{N,D} ~|~ V_i=V_j^{\perp},\mbox{ if } i+j=0\},\\
\sF_{n,d}^{\mB,\imath}=&\{\mathfrak{f}\in\sF_{n,d}^{\mB,\jmath}~|~ \dim V_{-\frac{1}{2}}=\dim V_\frac{1}{2}-1\}\subset \sF_{n,d}^{\mB,\jmath}.
\end{align*}
Set $W_{i+\half}=\langle v_{-d},\ldots, v_i\rangle$. For
$\gamma\in\Lambda_{n,d}^\jmath$, now the parabolic subgroup $P_\gamma$ becomes the one
consisting of the elements which stabilize the flag
$$\mathfrak{f}_\gamma:=(0=W_{-d-\half}\subset
W_{-d+\gamma_{-n}-\half}\subset\cdots\subset W_{d-\gamma_n+\half}\subset
W_{d+\half}=\F^D).$$
\begin{lem}
	\label{variety isomorphism of type B}
	As varieties,
	$$\bigsqcup_{\gamma \in \Lambda_{n,d}^\mathfrak{b}} \mathrm{SO}_D/P_{\gamma} \simeq
\sF_{n,d}^{\mB,\mathfrak{b}},\ (\mathfrak{b}=\imath,\jmath):\quad [g] \in \mathrm{SO}_D/P_\gamma \mapsto g\mathfrak{f}_\gamma.$$
\end{lem}
\begin{proof}
	As the same as type A, $\mathrm{Stab}(\mathfrak{f}_\gamma)= P_\gamma$. So both of the maps (for type $\jmath$ and type $\imath$) are
well-defined and injective. Below we only need to show the surjectivity of the map for type $\jmath$.
	
	Let $\mathfrak{f}=(0=V_{-n-\half} \subset \ldots \subset V_{n+\half}=\F^d)$, $V$ a
maximal isotropic subspace containing $V_{-\half}$. We have $V \subset V_\half$ since
$V_\half=V_{-\half}^\perp$. Let $\gamma_i=\dim V_{i+\half}/V_{i-\half}$. Then there
exists a basis $\{v_{-d}',\ldots,v_0'\}$ of $V^\perp$, such that $(v_0',v_0')=1$,
$V=\langle v_{-d}',\ldots, v_{-1}'\rangle$ and $V_{i-\half}=\langle v_{-d}',\ldots,
v_{-d+\gamma_{-n}+\ldots+\gamma_i}'\rangle$ for $-n \leq i \leq 0$. Let $v_i' \in \F^D \
(1 \leq i \leq d)$ such that $(v_i',v_{-j}')=0$ if $j>i$ and $(v_i',v_{-i}')=1$. Clearly
such $v_i'$'s exist. Set
	\begin{align*}
	w_i=
	\begin{cases}
	v_i'-\sum_{j<i}(v_i',v_j')v_j', \quad &i\leq 0,\\
	v_i'-\sum_{j>i}(v_i',v_{-j}')v_{-j}'-\frac{(v_i',v_{-i}')}{2}, \quad &i>0.
	\end{cases}
	\end{align*}
	We have $(w_i,w_j)=\delta_{i,-j}$ and $V_{i+\half}=\langle w_{-d},\ldots,
w_{-d+\gamma_{-n}+\ldots+\gamma_i}\rangle$ for $-n \leq i<0$, hence $V_{i-\half}=\langle
w_{-d},\ldots, w_{d-\gamma_{-n}-\ldots-\gamma_i}\rangle$ for $0<i \leq n$. Let $g' \in
\mathrm{GL}_D$ such that $g'(v_i)=w_i$, and $g=\det(g')g$, then $g \in \mathrm{SO}_D$ and
$g\mathfrak{f}_\gamma=\mathfrak{f}$, so this map is bijective.
\end{proof}

Denote
\begin{align}\label{xind}
\Xi_{n,d}^\jmath&=\{(a_{ij})_{-n\leq i,j\leq n} \in \mathrm{Mat}_N(\mathbb{N}) ~|~ a_{ij}=a_{-i,-j}, \sum_{i,j} a_{ij}=D\},\\\nonumber
\Xi_{n,d}^\imath&=\{(a_{ij}) \in \Xi_{n,d}^\jmath ~|~ a_{0i}=a_{i0}=0 (i\neq0), a_{00}=1\},\\\nonumber
\Xi_{m|n,d}^\jmath&=\{(a_{ij})_{-m\leq i\leq m;-n\leq j\leq n} \in \mathrm{Mat}_{M\times N}(\mathbb{N}) ~|~
a_{ij}=a_{-i,-j}, \sum_{i,j} a_{ij}=D\},\\\nonumber
\Xi_{m|n,d}^{\jmath\imath}&=\{(a_{ij}) \in \Xi_{m|n,d}^\jmath ~|~
a_{i0}=0 (i\neq0), a_{00}=1\},\\\nonumber
\Xi_{m|n,d}^{\imath\jmath}&=\{(a_{ij}) \in \Xi_{m|n,d}^\jmath ~|~
a_{0i}=0 (i\neq0), a_{00}=1\},\\\nonumber
\Xi_{m|n,d}^\imath&=\{(a_{ij}) \in \Xi_{m|n,d}^\jmath ~|~
a_{i0}=a_{0j}=0 (i,j\neq0), a_{00}=1\}.
\end{align}
%
Let $\mathrm{SO}_D$ act diagonally on the products $\sF_{m,d}^{\mB,\mathfrak{b}}
\times \sF_{n,d}^{\mB,\mathfrak{c}}$, ($\mathfrak{b,c}\in\{\imath,\jmath\}$).
It has been shown in \cite[Lemma~2.1 \& Lemma~5.1]{BKLW18} that there is a bijection $$\mathrm{SO}_{D}
\backslash \sF_{n,d}^{\mB,\mathfrak{b}} \times \sF_{n,d}^{\mB,\mathfrak{b}} \leftrightarrow \Xi_{n,d}^{\mathfrak{b}}, \quad(\mathfrak{b}\in\{\imath,\jmath\}).$$ Moreover, a similar argument brings us the following bijection
\begin{equation*}
\mathrm{SO}_{D} \backslash \sF_{m,d}^{\mB,\mathfrak{b}} \times \sF_{n,d}^{\mB,\mathfrak{c}} \leftrightarrow \Xi_{m|n,d}^{\mathfrak{b,c}}, \quad (\mathfrak{b,c}\in\{\imath,\jmath\}).
\end{equation*}

We set
$$\T_{m|n,d}^{\mathfrak{bc}}=\A_{\mathrm{SO}_D}(\sF_{m,d}^{\mB,\mathfrak{b}} \times \sF_{n,d}^{\mB,\mathfrak{c}}), \quad
\T_{m|n}^{\mathfrak{bc}}=\bigoplus_{d=0}^{\infty} \T_{m|n,d}^{\mathfrak{bc}}, \quad
\Sc_{n,d}^\mathfrak{b}=\T_{n|n,d}^{\mathfrak{bb}},\quad(\mathfrak{b,c}\in\{\imath,\jmath\}).$$
Here $\Sc^\mathfrak{b}_{n,d}$ (together with its convolution product) is called an \emph{$\imath$Schur
algebra}.

For $\mathfrak{b,c}\in\{\imath,\jmath\}$ and $A \in \Xi_{n,d}^\mathfrak{b}$ (resp. $\Xi_{m,d}^{\mathfrak{b}}$ and $\Xi_{m|n,d}^{\mathfrak{bc}}$), let $\chi_A \in
\Sc_{n,d}^\mathfrak{b}$ (resp. $\Sc_{m,d}^\mathfrak{b}$ and $\T_{m|n,d}^\mathfrak{bc}$) be the
characteristic function of the $\mathrm{SO}_D$-orbit in $\sF_{n,d}^{\mB,\mathfrak{b}} \times
\sF_{n,d}^{\mB,\mathfrak{b}}$ (resp. $\sF_{m,d}^{\mB,\mathfrak{b}} \times \sF_{m,d}^{\mB,\mathfrak{b}}$ and $\sF_{m,d}^{\mB,\mathfrak{b}} \times
\sF_{n,d}^{\mB,\mathfrak{c}}$) associated with $A$. Denote
\begin{equation}\label{[A]b}
[A]=q^{\half(\sum_{i \geq k,j<l}a_{ij}a_{kl}-\sum_{i \geq 0,j<0} a_{ij})}\chi_A.
\end{equation}
We remark here that $\half(\sum_{i
\geq k,j<l}a_{ij}a_{kl}-\sum_{i \geq 0,j<0} a_{ij})$ is always an integer.

\subsection{Explicit action}
Let $E_{ij}^\theta=E_{ij}+E_{-i,-j}\in \mathrm{Mat}_{[-m,m]\times [-m,m]}(\mathbb{N})$ or
$\mathrm{Mat}_{[-n,n]\times [-n,n]}(\mathbb{N})$.
For $A=(a_{ij}) \in \Xi_{m|n,d}^{\mathfrak{bc}}$, let
\begin{equation}\label{asharp}
a_{ij}^\sharp=\left\{\begin{array}{ll}
\frac{a_{00}-1}{2}, & \mbox{if $(i,j)=(0,0)$};\\
a_{ij}, &\mbox{otherwise,}
\end{array}\right.
\end{equation}
and define
\begin{align*}
\ro(A)&=(\sum_{j=-n}^na_{-m,j},\sum_{j=-n}^na_{-m+1,j},\ldots,\sum_{j=-n}^na_{m,j}),\\
\co(A)&=(\sum_{i=-m}^ma_{i,-n},\sum_{i=-m}^ma_{i,-n+1},\ldots,\sum_{i=-m}^ma_{i,n}).
\end{align*} The definitions of $\ro(A)$ and $\co(A)$ for $A\in \Xi_{m,d}^\mathfrak{b}$ or $\Xi_{n,d}^\mathfrak{b}$
are similar.
We have the following formulas
about the left $\Sc_{m,d}^\mathfrak{b}$-action $\Phi$ and right $\Sc_{n,d}^\mathfrak{c}$-action
$\Psi$ on $\T_{m|n,d}^{\mathfrak{bc}}$.
\begin{prop}\label{prop:actionsj}
	Let $A=(a_{ij}) \in \Xi_{m|n,d}^\mathfrak{bc}$, $(\mathfrak{b,c}\in\{\imath,\jmath\})$.
	\begin{itemize}
		\item[(1)] Assume $B,C \in \Xi_{m,d}^\mathfrak{b}$ such that $B-E_{i,i+1}^\theta$ and
		$C-E_{i+1,i}^\theta$ are diagonal.
		If $\co(B)=\co(C)=\ro(A)$, then
		\begin{align*}
		[B] \cdot [A]&=\sum_{-n \leq j \leq n;a_{i+1,j}>0} q^{\sum_{k>j}
			(a_{i+1,k}-a_{ik})}{[a_{ij}+1]}[A+E_{ij}^\theta-E_{i+1,j}^\theta],\\
		[C] \cdot [A]&=\sum_{-n \leq j \leq 0;a_{ij}^\sharp>0} q^{\sum_{k<j}
			(a_{ik}-a_{i+1,k})}[a_{i+1,j}+1][A+E_{i+1,j}^\theta-E_{ij}^\theta]\\
		&+\sum_{0<j \leq n;a_{ij}>0} q^{\sum_{k<j}
			(a_{ik}-a_{i+1,k})-\delta_{0i}}[a_{i+1,j}+1][A+E_{i+1,j}^\theta-E_{ij}^\theta].
		\end{align*}
		\item[(2)] Suppose $B,C \in \Xi_{n,d}^\mathfrak{c}$ such that $B-E_{i,i+1}^\theta$ and
		$C-E_{i+1,i}^\theta$ are diagonal. If $\ro(B)=\ro(C)=\co(A)$, then
		\begin{align*}
		[A] \cdot [B]&=\sum_{-m \leq j \leq 0;a_{ji}^\sharp>0} q^{\sum_{k<j}
			(a_{ki}-a_{k,i+1})}[a_{j,i+1}+1][A+E_{j,i+1}^\theta-E_{ji}^\theta]\\
		&+\sum_{0<j \leq m;a_{ji}>0} q^{\sum_{k<j}
			(a_{ki}-a_{k,i+1})-\delta_{0i}}[a_{j,i+1}+1][A+E_{j,i+1}^\theta-E_{ji}^\theta],\\
		[A] \cdot [C]&=\sum_{-m \leq j \leq m;a_{j,i+1}>0} q^{\sum_{k>j}
			(a_{k,i+1}-a_{ki})}{[a_{ji}+1]}[A+E_{ji}^\theta-E_{j,i+1}^\theta].
		\end{align*}
\item[(3)] If $\mathfrak{b}=\imath$ and $D \in \Xi_{m,d}^{\imath}$ such that
$D-E_{1,-1}^\theta$ are diagonal and $\co(D)=\ro(A)$, then
	\begin{align*}
	[D] \cdot [A]&=(q^{\sum_{j \geq 0} a_{1j}-\sum_{j<0} a_{1j}}-q^{\sum_j a_{1j}})[A]\\
	&+\sum_{-n \leq j \leq n;a_{1j}>0} q^{\sum_{k>j} (a_{1j}-a_{-1,j})-a_{0j}+\sum_{k<0} \delta_{jk}} [a_{-1,j}+1-\delta_{0j}][A+E_{-1,j}^\theta-E_{1j}^\theta].
	\end{align*}
	If $\mathfrak{c}=\imath$ and $D \in \Xi_{n,d}^{\imath}$ such that $D-E_{1,-1}^\theta$ are diagonal and
$\ro(D)=\co(A)$, then
	\begin{align*}
	[A] \cdot [D]&=(q^{\sum_{j \geq 0} a_{j1}-\sum_{j<0} a_{j1}}-q^{\sum_j a_{j1}})[A]\\
	&+\sum_{-m \leq j \leq m;a_{j1}>0} q^{\sum_{k>j} (a_{j1}-a_{j,-1})-a_{j0}+\sum_{k<0} \delta_{jk}} [a_{j,-1}+1-\delta_{0j}][A+E_{j,-1}^\theta-E_{j1}^\theta].
	\end{align*}
	\end{itemize}
\end{prop}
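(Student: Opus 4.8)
The plan is to compute these convolution products directly by imitating the geometric argument of \cite[Lemma~3.4]{BLM90}, now in the type B setting of the partial flag varieties $\sF_{m,d}^{\mB,\mathfrak{b}}$ and $\sF_{n,d}^{\mB,\mathfrak{c}}$. First I would fix a representative pair $(\mathfrak{f}_1,\mathfrak{f}_2)$ in the orbit $\Ob_{A'}$ for a target matrix $A'$ obtained from $A$ by adding $E_{ij}^\theta-E_{i+1,j}^\theta$ (or the analogous single-entry shift), and count the number of flags $\mathfrak{f}\in\sF_{m,d}^{\mB,\mathfrak{b}}$ with $(\mathfrak{f}_1,\mathfrak{f})\in\Ob_B$ and $(\mathfrak{f},\mathfrak{f}_2)\in\Ob_A$. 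Since $B-E_{i,i+1}^\theta$ is diagonal, the flag $\mathfrak{f}$ differs from $\mathfrak{f}_1$ only in the single subspace $V_i$ (and, by the isotropy constraint $V_i=V_{-i}^\perp$, simultaneously in $V_{-i-1}$), so the count reduces to choosing a line in a subquotient; this is exactly where the quantum integer $[a_{ij}+1]$ and the power of $q$ recording the relative position come from. The bar-twisted normalization $[A]=q^{\half(\sum_{i\ge k,j<l}a_{ij}a_{kl}-\sum_{i\ge 0,j<0}a_{ij})}\chi_A$ from \eqref{[A]b} then converts the raw count of $\F_{\mathbf q}$-points into the stated balanced formula; the extra $-\delta_{0i}$ terms and the splitting of the sum over $j\le 0$ versus $j>0$ in the $[C]\cdot[A]$ formula are precisely the bookkeeping artifacts of the fact that the index $0$ plays a distinguished role (the middle block of odd size $2\gamma_0+1$, resp. $a_{00}$ odd), so that $a_{ij}^\sharp$ rather than $a_{ij}$ is the correct dimension of the relevant subquotient near the diagonal.

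For parts (1) and (2) this is essentially the computation already carried out in \cite{BKLW18} for the square case $\Sc^{\mathfrak b}_{n,d}=\T^{\mathfrak{bb}}_{n|n,d}$; the only new point is to check that the formulas remain valid when the matrices live in the rectangular set $\Xi_{m|n,d}^{\mathfrak{bc}}$ rather than in $\Xi_{n,d}^{\mathfrak b}$. I would note that the convolution defining $\Phi$ (resp. $\Psi$) only ever modifies the $\sF_m$-coordinate (resp. the $\sF_n$-coordinate) of a pair, so the argument is insensitive to the size of the other factor; one simply transcribes the $\mathrm{SO}_D$-orbit combinatorics on $\sF_m^{\mB,\mathfrak b}\times\sF_m^{\mB,\mathfrak b}$ — encoded by $B,C$ — and tracks how it acts on the left-hand index of a pair in $\sF_m^{\mB,\mathfrak b}\times\sF_n^{\mB,\mathfrak c}$. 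Part (2) follows from part (1) by the obvious symmetry interchanging the roles of the two flag varieties (transpose of matrices, $\ro\leftrightarrow\co$), which also explains why the exponents get reflected ($\sum_{k>j}\leftrightarrow\sum_{k<j}$).

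Part (3) is genuinely new and is the main obstacle. Here $D\in\Xi_{m,d}^\imath$ with $D-E_{1,-1}^\theta$ diagonal encodes the distinguished generator of the $\imath$Schur algebra $\Sc^\imath_{m,d}$ that "jumps across the origin" — the subspace $V_{1/2}$ is enlarged at the expense of $V_{-1/2}$, and the isotropy condition $V_{1/2}=V_{-1/2}^\perp$ forces a simultaneous constraint, which is the source of the genuinely non-diagonal, length-two nature of this element and of the leading scalar term $(q^{\sum_{j\ge0}a_{1j}-\sum_{j<0}a_{1j}}-q^{\sum_j a_{1j}})[A]$. I expect to get this by a more careful point count: given $(\mathfrak{f}_1,\mathfrak{f}_2)\in\Ob_A$, a flag $\mathfrak f$ with $(\mathfrak f_1,\mathfrak f)\in\Ob_D$ differs from $\mathfrak f_1$ in the subspace indexed by $-1$ (equivalently by $1$, via perpendicularity), and one must count such $\mathfrak f$ with $(\mathfrak f,\mathfrak f_2)\in\Ob_{A'}$ for each possible resulting matrix $A'$; the case $A'=A$ (the flag moves but the relative position with $\mathfrak f_2$ is unchanged) produces the scalar term, while $A'=A+E_{-1,j}^\theta-E_{1j}^\theta$ produces the sum. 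The factor $[a_{-1,j}+1-\delta_{0j}]$ again reflects that when $j=0$ the relevant subquotient has dimension $a_{-1,0}$ but the $\imath$-condition on $\Xi$ (namely $a_{i0}=a_{0j}=0$ for $i,j\ne 0$ in the $\imath$ case, or $a_{00}=1$) shifts the count by one. The delicate part is getting every exponent of $q$ correct — in particular the term $\sum_{k<0}\delta_{jk}$ and the $-a_{0j}$ correction — which requires carefully re-expanding the normalization \eqref{[A]b} before and after the modification and comparing; I would organize this by first proving the unnormalized ($\chi_A$-level) identity as a clean $\F_{\mathbf q}$-point count and only at the end multiplying through by the ratio of the $q$-powers in \eqref{[A]b}, reducing the whole verification to an elementary but lengthy identity among quadratic forms in the entries $a_{ij}$.
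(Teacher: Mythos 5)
For parts (1) and (2) your plan coincides with the paper's: both are obtained by transcribing the orbit count of \cite[Theorem~3.7]{BKLW18} to the rectangular sets $\Xi^{\mathfrak{bc}}_{m|n,d}$, and your observation that the left (resp.\ right) convolution only modifies the $\sF_m$- (resp.\ $\sF_n$-) coordinate, so the size of the other factor is irrelevant, is exactly the point. The divergence is in part (3). The paper performs no new orbit count there: it deduces (3) from (1)--(2) and associativity of convolution via the identity
\begin{equation*}
[D]=[D-E_{1,-1}^\theta+E_{10}^\theta]\,[D-E_{1,-1}^\theta+E_{01}^\theta]-q\,[d_{11}+1]\,[D-E_{1,-1}^\theta+E_{11}^\theta],
\end{equation*}
in which the two factors are Chevalley-type ($f_0$- and $e_0$-type) elements living in the mixed spaces $\T^{\imath\jmath}_{m|m,d}$ and $\T^{\jmath\imath}_{m|m,d}$, and the last matrix is diagonal; this is the Schur-algebra avatar of $t_0=E_0+qF_0K_0^{-1}+K_0^{-1}$. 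Your route --- a direct point count for $[D]\cdot[A]$ --- would give an independent, $\chi$-level verification, but it is strictly harder than what you describe, whereas the paper's reduction is two lines once the $i=0$ instances of (1)--(2) (for mixed flag types) are available.

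Concretely, two things in your part (3) plan are the actual content and are not addressed. First, the intermediate flag $\mathfrak f$ arises from $\mathfrak f_1$ by replacing the isotropic subspace $V_{-\half}$ by another isotropic subspace meeting it in codimension one inside the nondegenerate quadratic space $V_{3/2}/V_{-3/2}$; the resulting count is a count of vectors of prescribed isotropy type on a quadric, not a Grassmannian/subquotient-line count as in type A or in parts (1)--(2), and this is precisely what produces the difference of two $q$-powers in the coefficient of $[A]$ --- a naive line count cannot yield it. Second, you must prove that the only relative positions occurring are $A$ and $A+E_{-1,j}^\theta-E_{1,j}^\theta$: since both $V_{-\half}$ and $V_{\half}$ change simultaneously, there is no containment as in the Chevalley case, so ruling out other $\theta$-symmetric modifications requires an argument. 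Also, in your sketch of (3) you fix $(\mathfrak f_1,\mathfrak f_2)\in\Ob_A$ and count intermediate flags landing in $\Ob_{A'}$; the convolution coefficient of $[A']$ requires fixing the pair in $\Ob_{A'}$ (as you correctly set up in your first paragraph) --- with the roles swapped the counts differ. None of this is unfixable, but as written the essential difficulty of (3) remains open; either carry out the quadric count in full or adopt the paper's reduction of (3) to (1)--(2).
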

\begin{proof}
 Items (1) and (2) can be derived by imitating the computation of \cite[Theorem 3.7]{BKLW18}. Then item (3) follows by $$[D]=[D-E_{1,-1}^\theta+E_{10}^\theta][D-E_{1,-1}^\theta+E_{01}^\theta]-q[d_{11}+1][D-E_{1,-1}^\theta+E_{11}^\theta],$$
 where $d_{11}$ is the $(1,1)$-th entry of $D$.
\end{proof}
%
\subsection{The $\imath$quantum group $\qU_n^\jmath$}
Denote
$$\mathbb{I}_N=\{-n,-n+1,\ldots,n-1,n\}, \quad
\mathbb{I}_N^{\half}=\{-n+\half,\ldots,n-\half\}.$$
Let $\qU_N$ denote the quantum group $\qU(\mathfrak{gl}_N)$ of type $\mathrm{A}_{2n}$
over $\mathbb{A}$ with generators $E_i, F_i, (i \in \mathbb{I}_N^{\half})$ and $D_j^{\pm
1}, (j \in \mathbb{I}_N)$ defined in Subsection~\ref{qg}. Here we not only replace $n$ by $N$ in the definition but also take a shift by $-n-\half$ (resp. $-n-1$) on the index of $E_i, F_i$ (resp. $D_j$). Write $K_i=D_{i-\half} D_{i+\half}^{-1}, (i \in
\mathbb{I}_N^{\half})$.

The \emph{$\imath$quantum group} $\qU_n^\jmath$ is the $\mathbb{A}$-subalgebra of $\qU_N$ generated by
\begin{align*}
e_i&=E_{i+\half}+K_{i+\half}^{-1}F_{-(i+\half)}, \quad
f_i=E_{-(i+\half)}+F_{i+\half}K_{-(i+\half)}^{-1}, \quad (0 \leq i<n),\\
d_0^{\pm 1}&=D_0^{\pm 1}, \quad d_j^{\pm 1}=(D_j D_{-j})^{\pm 1}, \quad (0<j \leq n).
\end{align*}
Write $k_i=K_{i+\half} K_{-i-\half}^{-1}$.

It is easy to check that
\begin{align*}
\Delta(e_i)&=e_i \otimes K_{i+\half}^{-1}+1 \otimes E_{i+\half}+k_i^{-1} \otimes
K_{i+\half}^{-1}F_{-(i+\half)},\\
\Delta(f_i)&=f_i \otimes K_{-(i+\half)}^{-1}+1 \otimes E_{-(i+\half)}+k_i \otimes
F_{i+\half}K_{-(i+\half)}^{-1},\\
\Delta(d_j)&=d_j \otimes d_j.
\end{align*} So $\Delta(\qU_n^\jmath) \subset \qU_n^\jmath \otimes \qU_N$, which means
$\qU_n^\jmath$ is a right coideal of $\qU_N$.
Its specialization at $q\to1$ is
$U(\mathfrak{gl}_{n+1}\oplus \mathfrak{gl}_n)$.

It has been shown in \cite[\S 4]{BKLW18} that Beilinson-Lusztig-MacPherson's realization
of general linear quantum groups still makes sense for $\qU_n^\jmath$. Such realization
induces a
surjective $\mathbb{A}$-algebra homomorphism $\kappa_{n,d}^\jmath: \qU_n^\jmath
\twoheadrightarrow {}_\mathbb{A}\!\Sc_{n,d}^\jmath$ satisfying
\begin{align}\label{homo:kj}
e_i& \mapsto \sum_{Z \in \Xi_{n,d-1}^\diag} [E_{i,i+1}^\theta+Z], \quad f_i \mapsto
\sum_{Z \in \Xi_{n,d-1}^\diag} [E_{i+1,i}^\theta+Z], \quad (0\leq i<n),\\\nonumber
d_0& \mapsto \sum_{Z \in \Xi_{n,d}^\diag} q^{z_{00}^\sharp}[Z], \quad d_j \mapsto
\sum_{Z \in \Xi_{n,d}^\diag} q^{z_{jj}}[Z], \quad (0 < j \leq n),
\end{align}
where
$\Xi_{n,d}^\diag=\{A \in \Xi_{n,d}^\jmath ~|~ \mbox{$A$ is diagonal}\}$ and the notation $z_{00}^\sharp$ is defined in \eqref{asharp}.
%

\subsection{The $\imath$quantum group $\qU_n^{\imath}$}

Denote
$$\mathbb{I}_{2n}=\{-n+\half,\ldots,n-\half\}, \quad
\mathbb{I}_{2n}^{\half}=\{-n+1,\ldots,n-1\}.$$
Let $\qU_{2n}$ denote the quantum group $\qU(\mathfrak{gl}_{2n})$ of type $\mA_{2n-1}$
with generators $E_i, F_i, (i \in \mathbb{I}_{2n}^{\half})$ and $D_j^{\pm 1}, (j \in
\mathbb{I}_{2n})$. Here we not only replace $n$ by $2n$ in the definition of $\qU_{n}$ in Subsection~\ref{qg} but also take a shift by $-n$ (resp. $-n-\half$) on the index of $E_i, F_i$ (resp. $D_j$). Denote $K_i=D_{i-\half} D_{i+\half}^{-1}$ and $k_i=K_i K_{-i}^{-1}, (i \in\mathbb{I}_{2n}^{\half})$.

The \emph{$\imath$quantum group} $\qU_n^{\imath}$ is the $\mathbb{A}$-subalgebra of
$\qU_{2n}$ generated by
\begin{align*}
e_i&=E_i+K_i^{-1}F_{-i},\qquad
f_i=E_{-i}+F_i K_{-i}^{-1},\quad (0<i<n),\\
t_0&=E_0+qF_0 K_0^{-1}+K_0^{-1},
\qquad d_j^{\pm 1}=(D_{j-\half} D_{-j+\half})^{\pm 1}, \quad(0<j \leq n).
\end{align*}
We have
\begin{align*}
\Delta(e_i)&=e_i \otimes K_i^{-1}+1 \otimes E_i+k_i^{-1} \otimes K_i^{-1}F_{-i},\\
\Delta(f_i)&=f_i \otimes K_{-i}^{-1}+1 \otimes E_{-i}+k_i \otimes F_i K_{-i}^{-1},\\
\Delta(d_j)&=d_j \otimes d_j,\\
\Delta(t_0)&=t_0 \otimes K_0^{-1}+1 \otimes (E_0+qF_0 K_0^{-1}).
\end{align*}
Therefore $\qU_n^\imath$ is a right coideal of $\qU_{2n}$.  Its specialization at $q\to1$ is $U(\mathfrak{gl}_{n}\oplus \mathfrak{gl}_n)$.

A Beilinson-Lusztig-MacPherson type realization of $\qU_n^\imath$ has been given in
\cite[\S Appendix]{BKLW18}. It induces a surjective $\mathbb{A}$-algebra homomorphism
$\kappa_{n,d}^\imath: \qU_n^\imath \twoheadrightarrow {}_\mathbb{A}\!\Sc_{n,d}^\imath$ satisfying
\begin{align}\label{kappai}
e_i &\mapsto \sum_{Z \in \Xi_{n,d-1}^{\diag,\imath}} [E_{i,i+1}^\theta+Z], \qquad f_i
\mapsto \sum_{Z \in \Xi_{n,d-1}^{\diag,\imath}} [E_{i+1,i}^\theta+Z], \quad (0<i<n),\\\nonumber
t_0 &\mapsto \sum_{Z \in \Xi_{n,d-1}^{\diag,\imath}} [E_{1,-1}^\theta+Z]+\sum_{Z \in
\Xi_{n,d}^{\diag,\imath}} q^{z_{11}}[Z],\quad
d_j \mapsto \sum_{Z \in \Xi_{n,d}^{\diag,\imath}} q^{z_{jj}}[Z], \quad (0<j \leq
n),
\end{align}
where
$\Xi_{n,d}^{\diag,\imath}=\{A \in \Xi_{n,d}^\imath ~|~ \mbox{$A$ is diagonal}\}$.

\subsection{Double centralizer property}
Now we can lift the general quantum Schur duality showed in Theorem~\ref{duality} from quantum Schur algebras to $\imath$quantum groups via the homomorphisms $\kappa_{n,d}^\jmath$ and $\kappa_{n,d}^\imath$.
\begin{thm}[geometric $\imath$Howe duality]
\label{doubleAIV}
	The actions
	$$\qU_m^\mathfrak{b} \quad \stackrel{\kappa_{m,d}^\mathfrak{b}}\twoheadrightarrow \quad
{}_\mathbb{A}\!\Sc_{m,d}^\mathfrak{b} \quad \stackrel{{}_\mathbb{A}\Phi}\curvearrowright \quad
{}_\mathbb{A}\!\T_{m|n,d}^{\mathfrak{bc}} \quad \stackrel{{}_\mathbb{A}\Psi}\curvearrowleft \quad
{}_\mathbb{A}\!\Sc_{n,d}^\mathfrak{c} \quad \stackrel{\kappa_{n,d}^\mathfrak{c}}\twoheadleftarrow
\quad \qU_n^\mathfrak{c},\quad(\mathfrak{b,c}\in\{\imath,\jmath\})$$
	satisfy
	$$ {}_\mathbb{A}\Phi\circ\kappa_{m,d}^\mathfrak{b}(\qU_m^\mathfrak{b})\cong
\End_{\qU_n^\mathfrak{c}}({}_\mathbb{A}\!\T_{m|n,d}^{\mathfrak{bc}}), \quad
\End_{\qU_m^\mathfrak{b}}({}_\mathbb{A}\!\T_{m|n,d}^{\mathfrak{bc}})\cong
{}_\mathbb{A}\Psi\circ\kappa_{n,d}^\mathfrak{c}(\qU_n^\mathfrak{c}).$$
\end{thm}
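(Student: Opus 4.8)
The statement is a double-centralizer property for the pair of $\imath$quantum groups $(\qU_m^\mathfrak{b},\qU_n^\mathfrak{c})$ acting on the module ${}_\mathbb{A}\!\T_{m|n,d}^{\mathfrak{bc}}$, and the natural strategy is to reduce it to the already-established duality between the $\imath$Schur algebras ${}_\mathbb{A}\!\Sc_{m,d}^\mathfrak{b}$ and ${}_\mathbb{A}\!\Sc_{n,d}^\mathfrak{c}$. That duality is exactly an instance of Theorem~\ref{duality}: the partial flag varieties $\sF_{m,d}^{\mB,\mathfrak{b}}$ and $\sF_{n,d}^{\mB,\mathfrak{c}}$ are of the type-$\mB_d$ form treated in Section~\ref{section2} (with $G=\mathrm{SO}_D$, the $W_{B_d}$-invariant finite subsets $X_m^{\mathfrak{b}}$ and $X_n^{\mathfrak{c}}$, and the identifications of Subsection~\ref{weightB}), so Theorem~\ref{duality} gives
$$
{}_\mathbb{A}\Phi({}_\mathbb{A}\!\Sc_{m,d}^\mathfrak{b}) \cong \End_{{}_\mathbb{A}\!\Sc_{n,d}^\mathfrak{c}}({}_\mathbb{A}\!\T_{m|n,d}^{\mathfrak{bc}}),\qquad
\End_{{}_\mathbb{A}\!\Sc_{m,d}^\mathfrak{b}}({}_\mathbb{A}\!\T_{m|n,d}^{\mathfrak{bc}}) \cong {}_\mathbb{A}\Psi({}_\mathbb{A}\!\Sc_{n,d}^\mathfrak{c}).
$$
Thus the only thing left to prove is that the two images coincide after pulling back along the surjections $\kappa_{m,d}^\mathfrak{b}$ and $\kappa_{n,d}^\mathfrak{c}$.

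The key point is surjectivity of the BLM-type homomorphisms. First I would invoke $\cite{BKLW18}$ (for type $\jmath$, Section~4 there; for type $\imath$, the Appendix): $\kappa_{m,d}^\mathfrak{b}\colon \qU_m^\mathfrak{b}\twoheadrightarrow {}_\mathbb{A}\!\Sc_{m,d}^\mathfrak{b}$ and $\kappa_{n,d}^\mathfrak{c}\colon \qU_n^\mathfrak{c}\twoheadrightarrow {}_\mathbb{A}\!\Sc_{n,d}^\mathfrak{c}$ are surjective $\mathbb{A}$-algebra homomorphisms, with the explicit formulas \eqref{homo:kj} and \eqref{kappai}. Because $\kappa_{m,d}^\mathfrak{b}$ is surjective, the image of $\qU_m^\mathfrak{b}$ under ${}_\mathbb{A}\Phi\circ\kappa_{m,d}^\mathfrak{b}$ in $\End_{\mathbb{A}}({}_\mathbb{A}\!\T_{m|n,d}^{\mathfrak{bc}})$ equals ${}_\mathbb{A}\Phi({}_\mathbb{A}\!\Sc_{m,d}^\mathfrak{b})$; likewise the $\qU_n^\mathfrak{c}$-module structure on ${}_\mathbb{A}\!\T_{m|n,d}^{\mathfrak{bc}}$ (via ${}_\mathbb{A}\Psi\circ\kappa_{n,d}^\mathfrak{c}$) has the same submodule lattice as the ${}_\mathbb{A}\!\Sc_{n,d}^\mathfrak{c}$-module structure, so the two endomorphism rings coincide:
$$
\End_{\qU_n^\mathfrak{c}}({}_\mathbb{A}\!\T_{m|n,d}^{\mathfrak{bc}}) = \End_{{}_\mathbb{A}\!\Sc_{n,d}^\mathfrak{c}}({}_\mathbb{A}\!\T_{m|n,d}^{\mathfrak{bc}}),\qquad
\End_{\qU_m^\mathfrak{b}}({}_\mathbb{A}\!\T_{m|n,d}^{\mathfrak{bc}}) = \End_{{}_\mathbb{A}\!\Sc_{m,d}^\mathfrak{b}}({}_\mathbb{A}\!\T_{m|n,d}^{\mathfrak{bc}}).
$$
Combining these two elementary observations with the $\imath$Schur-algebra duality quoted above yields precisely the two displayed isomorphisms in the theorem. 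So the proof is a three-line deduction, modelled verbatim on the proof of Theorem~\ref{thm:howeA} in Section~\ref{section3}.

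The one genuine thing to check—and the step I expect to be the main obstacle—is that the left ${}_\mathbb{A}\!\Sc_{m,d}^\mathfrak{b}$-action and the right ${}_\mathbb{A}\!\Sc_{n,d}^\mathfrak{c}$-action on ${}_\mathbb{A}\!\T_{m|n,d}^{\mathfrak{bc}}$ really do assemble into an instance of the Section~\ref{section2} setup, i.e.\ that the convolution products defining $\Phi$ and $\Psi$ on ${}_\mathbb{A}\!\T_{m|n,d}^{\mathfrak{bc}}$ match the general $(\Sc_\ff,\Sc_\g)$-construction after the bijections $\eqref{bijection}$ and Lemma~\ref{variety isomorphism of type B}. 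This is a bookkeeping matter: one must identify $X_\ff,X_\g$ with the appropriate $X_m^{\mathfrak{b}},X_n^{\mathfrak{c}}$, match parabolic subgroups, and observe that the polynomiality of structure constants (needed to pass to $\mathbb{A}=\K(q)$) is the same input as in $\cite{BKLW18}$; the normalization $\eqref{[A]b}$ of $[A]$ is the type-$\mB$ analogue of the one used in Section~\ref{section2}. Once this identification is in place, Theorem~\ref{duality} applies directly with no further work, and the rest is formal. I would therefore state the proof as: combine Theorem~\ref{duality} (applied to the type-$\mB$ flag varieties) with the surjectivity of $\kappa_{m,d}^\mathfrak{b}$ and $\kappa_{n,d}^\mathfrak{c}$ from $\cite{BKLW18}$, exactly as in the proof of Theorem~\ref{thm:howeA}.
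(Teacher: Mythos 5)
Your proposal is correct and is essentially the argument the paper itself relies on: Theorem~\ref{doubleAIV} is stated without a separate proof precisely because it follows from Theorem~\ref{duality} applied to the type-$\mB$ flag varieties together with the surjectivity of $\kappa_{m,d}^{\mathfrak{b}}$ and $\kappa_{n,d}^{\mathfrak{c}}$ from \cite{BKLW18}, exactly as in the deduction of Theorem~\ref{thm:howeA}. Your observation that surjectivity makes the $\qU$-module and $\imath$Schur-algebra module structures have the same endomorphism rings, plus the bookkeeping identification of the type-$\mB$ setup with the Section~\ref{section2} construction, is exactly what the paper implicitly uses.
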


\subsection{Duality from geometry of type $\mC$}
Let $\mathrm{Sp}_{2d}$ be the symplectic group whose natural module $\F^{2d}$ is equipped with a non-degenerate skew-symmetric bilinear form $(\cdot,\cdot)$ satisying $(v_i,v_j)=\mathrm{sign}(i)\delta_{i,-j}$ for a given basis $\{v_{-d+\half},\ldots,v_{d-\half}\}$. The weight lattice $X$ for $\mathrm{Sp}_{2d}$
looks as the same as that for $\mathrm{SO}_{2d+1}$ in Subsection~\ref{weightB}. We write it again here:
$X=X^0 \sqcup X^{\half}$ where
$$X^0= \sum_{i=1}^d \mathbb{Z} \delta_i, \qquad X^{\half}= \sum_{i=1}^d(\half+\mathbb{Z})
\delta_i.$$
The Weyl group of type $C_d$, which is isomorphic to the one of type $B_d$, acts on $X$ by permutating
$\delta_i$ and changing the signs of coefficients of $\delta_i$, too.
Take
\begin{align*}
X_n^0=\{\sum_{i=1}^d a_i\delta_i ~|~ a_i \in \mathbb{Z},|a_i| \leq n, \forall i\},\quad
X_n^{\half}=\{\sum_{i=1}^d a_i\delta_i ~|~ a_i \in \half+\mathbb{Z},|a_i|< n, \forall
i\}
\end{align*}
which can be indexed by
\begin{align*}
\Lambda_{n,d}^{\mC,\jmath}=&\{\gamma=(\gamma_{-n},\ldots,\gamma_{-1},2\gamma_0,\gamma_1,\ldots,\gamma_n)
~|~  \sum_{i=0}^n \gamma_i=d,\gamma_i=\gamma_{-i}\},\\
\Lambda_{n,d}^{\mC,\imath}=&\{\gamma\in\Lambda_{n,d}^{\mC,\jmath}~|~\gamma_0=0\},
\end{align*}respectively.
Precisely,
an orbit $\gamma\in\Lambda_{n,d}^{\mC,\jmath}$ consists of all
the weights $\sum_{i=1}^d a_i\delta_i \in X_n^0$ such that
$$\gamma_k=\sharp \{i ~|~  |a_i|=k, i=1,\ldots,d\}, \quad (k=0,1,\ldots,n),$$
while an orbit $\gamma\in\Lambda_{n,d}^{\mC,\imath}$ consists of all
the weights $\sum_{i=1}^d a_i\delta_i \in X_n^{\frac{1}{2}}$ such that
$$\gamma_k=\sharp \{i ~|~  |a_i|=k-\frac{1}{2}, i=1,\ldots,d\}, \quad (k=1,2,\ldots,n).$$
Denote
\begin{align*}
\sF_{n,d}^{\mC,\jmath}=&\{\mathfrak{f}=(0=V_{-n-\half}\subset V_{-n+\half}\subset\cdots\subset
V_{n+\half}=\F^{2d}) \in \sF_{N,2d} ~|~ V_i=V_j^{\perp},\mbox{ if } i+j=0\},\\
\sF_{n,d}^{\mC,\imath}=&\{\mathfrak{f}\in\sF_{n,d}^{\mC,\jmath}~|~ V_{-\frac{1}{2}}=V_\frac{1}{2}\}\subset \sF_{n,d}^{\mC,\jmath}.
\end{align*}
Set $W_{i}=\langle v_{-d+\half},\ldots, v_{i-\half}\rangle$.
For $\mathfrak{b}\in\{\imath,\jmath\}$ and $\gamma \in\Lambda_{n,d}^{\mC,\mathfrak{b}}$,
the parabolic subgroup $P_\gamma\subset\mathrm{Sp}_{2d}$ consists of the elements which
stabilizes the flag
$$\mathfrak{F}_\gamma:=(0=W_{-d} \subset W_{-d+\gamma_{-d}} \subset \ldots \subset
W_{d-\gamma_d} \subset W_d=\F^{2d}).$$

\begin{lem}
	\label{variety isomorphism of type C}
	As varieties,
	$$\bigsqcup_{\gamma \in \Lambda_{n,d}^{\mC,\mathfrak{b}}} \mathrm{Sp}_{2d}/P_{\gamma} \simeq
\sF_{n,d}^{\mC,\mathfrak{b}},\ (\mathfrak{b}=\imath,\jmath):\quad [g] \in \mathrm{Sp}_{2d}/P_\gamma \mapsto g\mathfrak{f}_\gamma.$$
\end{lem}
\begin{proof}
  The argument is almost as the same as the proof of Lemma~\ref{variety isomorphism of type B}.
\end{proof}

Denote
\begin{align*}
\Xi_{n,d}^{\mC,\jmath}=&\{(a_{ij})_{-n\leq i,j\leq n} \in \mathrm{Mat}_N(\mathbb{N}) ~|~ a_{ij}=a_{-i,-j}, \sum_{i,j} a_{ij}=2d\},\\
\Xi_{m|n,d}^{\mC,\jmath}=&\{(a_{ij})_{-m\leq i\leq m;-n\leq j\leq n} \in \mathrm{Mat}_{M\times N}(\mathbb{N}) ~|~ a_{ij}=a_{-i,-j}, \sum_{i,j} a_{ij}=2d\},\\
\Xi_{n,d}^{\mC,\imath}=&\{(a_{ij}) \in \Xi_{n,d}^{\mC,\jmath} ~|~ a_{0i}=a_{i0}=0\},\quad
\Xi_{m|n,d}^{\mC,\imath}=\{(a_{ij}) \in \Xi_{m|n,d}^{\mC,\jmath}~|~ a_{i0}=a_{0i}=0\},\\
\Xi_{m|n,d}^{\mC,\jmath\imath}=&\{(a_{ij}) \in \Xi_{m|n,d}^{\mC,\jmath}~|~ a_{i0}=0\},\quad
\Xi_{m|n,d}^{\mC,\imath\jmath}=\{(a_{ij}) \in \Xi_{m|n,d}^{\mC,\jmath}~|~ a_{0i}=0\}.
\end{align*}
Let $\mathrm{Sp}_{2d}$ act diagonally on the products
$\sF_{m,d}^{\mC,\mathfrak{b}} \times \sF_{n,d}^{\mC,\mathfrak{c}}$, $(\mathfrak{b,c}\in\{\imath,\jmath\})$. Similar to the case of type B, we have the following bijection:
$$\mathrm{Sp}_{2d} \backslash \sF_{m,d}^{\mC,\mathfrak{b}} \times \sF_{n,d}^{\mC,\mathfrak{c}} \longleftrightarrow
\Xi_{m|n,d}^{\mC,\mathfrak{bc}}.$$

We set
$$\T_{m|n,d}^{\mC,\mathfrak{bc}}=\A_{\mathrm{Sp}_{2d}}(\sF_{m,d}^{\mC,\mathfrak{b}} \times \sF_{n,d}^{\mC,\mathfrak{c}})
\quad\mbox{and}\quad
\Sc_{n,d}^{\mC,\mathfrak{b}}=\T_{n|n,d}^{\mC,\mathfrak{bc}},\quad (\mathfrak{b,c}\in\{\imath,\jmath\}).$$
For $\mathfrak{b,c}\in\{\imath,\jmath\}$ and $A \in \Xi_{n,d}^{\mC,\mathfrak{b}}$ (resp. $\Xi_{m,d}^{\mC,\mathfrak{b}}$ and $\Xi_{m|n,d}^{\mC,\mathfrak{bc}}$), let $\chi_A
\in \Sc_{n,d}^{\mC,\mathfrak{b}}$ (resp. $\Sc_{m,d}^{\mC,\mathfrak{b}}$ and $\T_{m|n,d}^{\mC,\mathfrak{bc}}$)
be the characteristic function of the $\mathrm{Sp}_D$-orbit in $\sF_{n,d}^{\mC,\mathfrak{b}} \times
\sF_{n,d}^{\mC,\mathfrak{b}}$ (resp. $\sF_{m,d}^{\mC,\mathfrak{b}} \times \sF_{m,d}^{\mC,\mathfrak{b}}$ and $\sF_{m,d}^{\mC,\mathfrak{b}} \times
\sF_{n,d}^{\mC,\mathfrak{c}}$) associated with $A$. Denote
\begin{equation}\label{[A]c}
[A]:=q^{\half(\sum_{i \geq k,j<l}a_{ij}a_{kl}+\sum_{i \geq 0, j<0} a_{ij})} \chi_A.
\end{equation}
One can check that the $q$-power in \eqref{[A]c} matches the one in \eqref{[A]b} under the map $$\Xi_{n,d}^{\mC,\mathfrak{b}}\rightarrow\Xi_{n,d}^{\mathfrak{b}}: \quad A\mapsto A+E_{00}, \quad (\mathfrak{b}=\imath,\jmath).$$
Thanks to \cite[Proposition~6.7]{BKLW18}, we have the $\A$-algebra isomorphisms
$$\Sc_{n,d}^{\mC,\mathfrak{b}} \cong \Sc_{n,d}^\mathfrak{b}, \quad [A] \mapsto [A+E_{00}], \quad (\mathfrak{b}=\imath,\jmath).$$ Moreover, there are $\A$-module isomorphisms
$$\T_{m|n,d}^{\mC,\mathfrak{bc}} \cong \T_{m|n,d}^{\mathfrak{bc}}, \quad [A] \mapsto [A+E_{00}], \quad (\mathfrak{b,c}\in\{\imath,\jmath\}).$$ which are compatible with the actions of $\imath$Schur algebras:
$$\begin{matrix}
\label{iso of B,C}
{}_\mathbb{A}\!\Sc_{m,d}^{\mathfrak{b}} & \curvearrowright & {}_\mathbb{A}\!\T_{m|n,d}^{\mathfrak{bc}} &
\curvearrowleft & {}_\mathbb{A}\!\Sc_{n,d}^\mathfrak{c}\\
\updownarrow\cong & \quad & \updownarrow\cong & \quad & \updownarrow\cong\\
{}_\mathbb{A}\Sc_{m,d}^{\mC,\mathfrak{b}} & \curvearrowright &
{}_\mathbb{A}\T_{m|n,d}^{\mC,\mathfrak{bc}} & \curvearrowleft &
{}_\mathbb{A}\Sc_{n,d}^{\mC,\mathfrak{c}},
\end{matrix} \qquad (\mathfrak{b,c}\in\{\imath,\jmath\}).$$
It can be lift to $\imath$quantum groups, too. That is, the actions
$$\qU_m^\mathfrak{b} \quad \twoheadrightarrow \quad
{}_\mathbb{A}\!\Sc_{m,d}^{\mC,\mathfrak{b}} \quad \curvearrowright \quad
{}_\mathbb{A}\!\T_{m|n,d}^{\mC,\mathfrak{bc}} \quad \curvearrowleft \quad
{}_\mathbb{A}\!\Sc_{n,d}^{\mC,\mathfrak{c}} \quad \twoheadleftarrow
\quad \qU_n^\mathfrak{c},\quad(\mathfrak{b,c}\in\{\imath,\jmath\})$$
satisfy double centralizer property.

\section{Quantum coordinate coalgebras}\label{section5}

\subsection{Quantum coordinate coalgebra $\mathcal{T}_n^\jmath$}
Let $$\qU_n^{\jmath \circ}:=\{f \in \qU_n^{\jmath*}  ~|~  \mbox{$\Ker f$ contains a
cofinite ideal of $\qU_n^\jmath$}\}$$ be the cofinite dual of $\qU_n^\jmath$, which is
equipped with a coalgebra (the comultiplication is denoted by $\Delta^{\jmath\circ}$) and
right $\qU_N^\circ$-module structure, induced by the algebra and right coideal
structure of $\qU_n^\jmath$.
We remark that there is no multiplication on $\qU_n^{\jmath \circ}$ since there is no
comultiplication on $\qU_n^{\jmath}$.

Denote by $\qU_N^\circ$ the cofinite dual of $\qU_N$ as defined in Subsection~\ref{qca} but with a shift by $-n-1$ on the index of $t_{ij}$.
For $f,f'\in \qU_N^\circ$, denote $\widetilde{f}=f|_{\qU_n^\jmath}$ and $\widetilde{f\otimes f'}=f\otimes
f'|_{\qU_n^\jmath\otimes \qU_n^\jmath}$. It is clear that $\tilde{f} \in
\qU_n^{\jmath\circ}$.
\begin{lem}
	For any $x \in \qU_n^\jmath$, $f,f' \in \qU_N^\circ$ and $g \in \qU_n^{\jmath\circ}$,
we have
	\begin{align*}
	\tilde{f} f'&=\widetilde{ff'}, \quad
	\widetilde{\Delta^{\circ} (f)}=\Delta^{\jmath\circ}(\tilde{f}),\quad
	\Delta^{\jmath\circ} (gf)=\sum_{(g),(f)} (g_{(1)} f_{(1)}) \otimes (g_{(2)}
f_{(2)}),\\
	\langle gf,x \rangle&=\sum_{(x)} \langle g,x_{(1)} \rangle\langle f,x_{(2)} \rangle,
	\end{align*}
	where $\Delta^{\jmath\circ}(g)=\sum_{(g)} g_{(1)} \otimes g_{(2)}$ and
$\Delta^{\circ}(f)=\sum_{(f)} f_{(1)} \otimes f_{(2)}$.
	
	In particular, we have $\Delta^{\jmath\circ}(\tilde{t}_{ij})=\sum_k \tilde{t}_{ik}
\otimes \tilde{t}_{kj}$.
\end{lem}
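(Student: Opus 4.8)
The plan is to unwind the three dual structures in play and then test each identity by pairing against elements of $\qU_n^\jmath$ (for statements valued in $\qU_n^{\jmath\circ}$) or against $x\otimes y$ with $x,y\in\qU_n^\jmath$ (for statements valued in $\qU_n^{\jmath\circ}\otimes\qU_n^{\jmath\circ}$). Recall the conventions: the product on $\qU_N^\circ$ is dual to the coproduct $\Delta$ of $\qU_N$, the comultiplication $\Delta^{\jmath\circ}$ on $\qU_n^{\jmath\circ}$ is dual to the multiplication of $\qU_n^\jmath$, and the right $\qU_N^\circ$-action on $\qU_n^{\jmath\circ}$ is the one dual to the coaction $\rho=\Delta|_{\qU_n^\jmath}\colon\qU_n^\jmath\to\qU_n^\jmath\otimes\qU_N$ furnished by the right coideal property. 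With these conventions the last displayed formula $\langle gf,x\rangle=\sum_{(x)}\langle g,x_{(1)}\rangle\langle f,x_{(2)}\rangle$ is simply the definition of the action---note $x_{(1)}\in\qU_n^\jmath$, so $\langle g,x_{(1)}\rangle$ is meaningful---so I would record it first and use it freely thereafter.

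For $\tilde f f'=\widetilde{ff'}$, I would evaluate both sides at $x\in\qU_n^\jmath$: the left side equals $\sum\langle\tilde f,x_{(1)}\rangle\langle f',x_{(2)}\rangle=\sum\langle f,x_{(1)}\rangle\langle f',x_{(2)}\rangle=\langle ff',x\rangle$, using that $\rho$ is the restriction of $\Delta$ and that $\tilde f$ agrees with $f$ on $\qU_n^\jmath$. For $\widetilde{\Delta^\circ(f)}=\Delta^{\jmath\circ}(\tilde f)$, pair with $x\otimes y$, $x,y\in\qU_n^\jmath$; since $\qU_n^\jmath$ is a subalgebra, $xy\in\qU_n^\jmath$, and both sides evaluate to $\langle f,xy\rangle$ directly from the definitions of the two comultiplications. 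For the compatibility $\Delta^{\jmath\circ}(gf)=\sum(g_{(1)}f_{(1)})\otimes(g_{(2)}f_{(2)})$, pair again with $x\otimes y$: the left side is $\langle gf,xy\rangle=\sum\langle g,(xy)_{(1)}\rangle\langle f,(xy)_{(2)}\rangle$, and since $\Delta$ is an algebra homomorphism, $\Delta(xy)=\Delta(x)\Delta(y)$ with the first tensor legs in $\qU_n^\jmath$; expanding $\langle g,x_{(1)}y_{(1)}\rangle$ and $\langle f,x_{(2)}y_{(2)}\rangle$ through $\Delta^{\jmath\circ}$ and $\Delta^\circ$ and regrouping the Sweedler indices rewrites the whole expression as $\sum\langle g_{(1)}f_{(1)},x\rangle\langle g_{(2)}f_{(2)},y\rangle$ after one more use of the definition of the action, which is the right side.

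The ``in particular'' assertion then drops out: taking $f=t_{ij}$ in the identity $\widetilde{\Delta^\circ(f)}=\Delta^{\jmath\circ}(\tilde f)$ and invoking $\Delta^\circ(t_{ij})=\sum_k t_{ik}\otimes t_{kj}$ from \eqref{comultioft}, while noting that restriction carries $\sum_k t_{ik}\otimes t_{kj}$ to $\sum_k\tilde t_{ik}\otimes\tilde t_{kj}$, gives $\Delta^{\jmath\circ}(\tilde t_{ij})=\sum_k\tilde t_{ik}\otimes\tilde t_{kj}$.

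The one point that requires genuine care---the closest thing to an obstacle here---is verifying that $gf$ actually lies in the cofinite dual $\qU_n^{\jmath\circ}$ and not just in $\qU_n^{\jmath*}$; this is exactly the well-definedness of the induced right $\qU_N^\circ$-module structure already asserted for $\qU_n^{\jmath\circ}$, and concretely it follows from the standard fact that a right comodule algebra over a Hopf algebra dualizes to a right module over the cofinite dual, applied to $\rho$. Granting that, all four identities are routine Sweedler-notation computations.
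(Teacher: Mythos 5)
Your proof is correct and is essentially the paper's argument made explicit: the paper disposes of this lemma with the single remark that it ``just follows from basic properties of bialgebras,'' and your Sweedler-notation verifications (pairing against $x$ or $x\otimes y$ in $\qU_n^\jmath$, using that $\Delta$ restricts to the coaction on the right coideal subalgebra) are precisely that routine check written out, including the correct observation that the fourth identity is the definition of the $\qU_N^\circ$-action and that well-definedness of $gf\in\qU_n^{\jmath\circ}$ is the already-asserted module structure. Nothing further is needed.
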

\begin{proof}
	It just follows from basic properties of bialgebras.
\end{proof}

Recall the quantum coordinate algebra $\mathcal{T}_N$ of $\qU_N$ with unit element
$\ve$ (i.e. the counit of $\qU_N$). Notice that now the index set of $t_{ij}$ is $\{(i,j)~|~i,j=-n,-n+1,\ldots,n\}$. Let $\mathcal{T}_n^\jmath$ be the right cyclic
$\mathcal{T}_N$-module generated by $\tilde{\ve}$. It is easy to check that
$\mathcal{T}_n^\jmath$ has a coalgebra structure (but no algebra structure). We call
$\mathcal{T}_n^\jmath$ the \emph{quantum coordinate coalgebra} of $\qU_n^\jmath$.

\begin{lem}\label{Uj1}
	\begin{itemize}
		\item[(1)]
		The quantum coordinate coalgebra $\mathcal{T}_n^\jmath$ admits a
$\qU_n^\jmath$-bimodule structure via the following left and right actions:
		\begin{align*}
		x \cdot f:=\sum_{(f)} f_{(1)}\langle f_{(2)},x\rangle,\quad
		f \cdot x:=\sum_{(f)} \langle f_{(1)},x\rangle f_{(2)},
		\end{align*}
		where $x \in \qU_n^\jmath$, $f \in \mathcal{T}_n^\jmath$,
$\Delta^{\jmath\circ}(f)=\sum_{(f)} f_{(1)} \otimes f_{(2)}$.
		\item[(2)]
		The action of $\mathcal{T}_N$ on $\mathcal{T}_n^\jmath$ is a
$\qU_n^\jmath$-bimodule homomorphism from $\mathcal{T}_n^\jmath \otimes \mathcal{T}_N$
to $\mathcal{T}_n^\jmath$.
	\end{itemize}
\end{lem}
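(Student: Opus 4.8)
The plan is to derive both statements formally from the coalgebra structure of $\mathcal{T}_n^\jmath$ recorded above, together with the bialgebra structure on $\qU_N$ and the right coideal property $\Delta(\qU_n^\jmath)\subseteq\qU_n^\jmath\otimes\qU_N$; nothing beyond Sweedler-notation bookkeeping should be needed.

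For (1), I would first invoke the standard fact that the cofinite dual of any algebra $A$ is an $A$-bimodule under the two ``hit'' actions, and that when $A^\circ$ is viewed as a coalgebra these actions are exactly $x\cdot f=\sum_{(f)}f_{(1)}\langle f_{(2)},x\rangle$ and $f\cdot x=\sum_{(f)}\langle f_{(1)},x\rangle f_{(2)}$. The bimodule identities $(xy)\cdot f=x\cdot(y\cdot f)$, $f\cdot(xy)=(f\cdot x)\cdot y$, $x\cdot(f\cdot y)=(x\cdot f)\cdot y$ and $1\cdot f=f=f\cdot 1$ are then precisely dual to coassociativity and counitality of $\qU_n^{\jmath\circ}$ paired against associativity and unitality of $\qU_n^\jmath$. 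It remains only to check that $\mathcal{T}_n^\jmath$ is stable under both actions, and this is immediate from the fact (recalled just before the lemma) that $\mathcal{T}_n^\jmath$ is a subcoalgebra of $\qU_n^{\jmath\circ}$: in the displayed formulas the outputs are built from the $f_{(1)}\in\mathcal{T}_n^\jmath$, hence again lie in $\mathcal{T}_n^\jmath$. So $\mathcal{T}_n^\jmath$ is a $\qU_n^\jmath$-sub-bimodule, proving (1).

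For (2), I would first make the $\qU_n^\jmath$-bimodule structure on $\mathcal{T}_n^\jmath\otimes\mathcal{T}_N$ explicit: $\mathcal{T}_N$ is a $\qU_N$-bimodule, hence a $\qU_n^\jmath$-bimodule by restriction along $\qU_n^\jmath\hookrightarrow\qU_N$, and the tensor product carries the diagonal actions $x\cdot(g\otimes f)=\sum(x_{(1)}\cdot g)\otimes(x_{(2)}\cdot f)$ and $(g\otimes f)\cdot x=\sum(g\cdot x_{(1)})\otimes(f\cdot x_{(2)})$, where $\Delta(x)=\sum x_{(1)}\otimes x_{(2)}\in\qU_n^\jmath\otimes\qU_N$; it is here that the right coideal property is used, guaranteeing that $x_{(1)}$ can act on the $\mathcal{T}_n^\jmath$-tensorand. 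One then verifies that the action map $\mathcal{T}_n^\jmath\otimes\mathcal{T}_N\to\mathcal{T}_n^\jmath$, $g\otimes f\mapsto gf$, intertwines these actions. For the left case, expanding $x\cdot(gf)=\sum(gf)_{(1)}\langle(gf)_{(2)},x\rangle$ by the identity $\Delta^{\jmath\circ}(gf)=\sum(g_{(1)}f_{(1)})\otimes(g_{(2)}f_{(2)})$ of the preceding lemma and then splitting the pairing via $\langle g_{(2)}f_{(2)},x\rangle=\sum\langle g_{(2)},x_{(1)}\rangle\langle f_{(2)},x_{(2)}\rangle$ regroups the sum as $\sum(x_{(1)}\cdot g)(x_{(2)}\cdot f)$, which is precisely the image of $x\cdot(g\otimes f)$. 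The right case is symmetric, using the same two identities applied on the other side.

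I do not expect a genuine obstacle here: the whole argument is a chain of applications of coassociativity, the module axioms, and the two compatibility identities from the preceding lemma. The only points meriting care are that one must use that $\qU_n^\jmath$ is a \emph{right} coideal, so the first Sweedler leg of $\Delta(x)$ remains in $\qU_n^\jmath$, and the usual caution about on which side each pairing and each action is applied; I would therefore keep the write-up of (1) to a couple of sentences and devote the bulk of the (still short) proof to the Sweedler computation in (2).
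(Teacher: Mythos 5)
Your proposal is correct and follows essentially the same route as the paper: part (1) is the standard hit-action bimodule structure on a (sub)coalgebra of the cofinite dual, and part (2) is exactly the Sweedler computation $x\cdot(gf)=\sum (x_{(1)}\cdot g)(x_{(2)}\cdot f)$ using the two compatibility identities of the preceding lemma together with the right coideal property $\Delta(\qU_n^\jmath)\subseteq\qU_n^\jmath\otimes\qU_N$. You are merely more explicit than the paper about the diagonal bimodule structure on $\mathcal{T}_n^\jmath\otimes\mathcal{T}_N$ and the symmetric right-action check (and note that for the right action the output is spanned by the $f_{(2)}$'s, which still lie in $\mathcal{T}_n^\jmath$ since it is a subcoalgebra), so no gap remains.
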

\begin{proof}
	The first statement is a basic property of coalgebras.

		For $x \in \qU_n^\jmath, f\in \mathcal{T}_N$ and $g \in \mathcal{T}_n^\jmath$, we have
	\begin{align*}
	x(gf)=\sum_{(g,f)}(g_{(1)} f_{(1)})\langle g_{(2)}f_{(2)},x \rangle=\sum_{(g,f,x)}g_{(1)} f_{(1)}\langle g_{(2)},x_{(1)} \rangle \langle
f_{(2)},x_{(2)} \rangle=\sum_{(x)} (x_{(1)} g)(x_{(2)} f),
	\end{align*}
	which verifies the second statement.
\end{proof}

\subsection{Basis theorem of $\mathcal{T}_n^\jmath$}
Thanks to the Schur duality between ${}_\mathbb{A}\!\HH(W_{B_d})$ and $\qU_n^\jmath$ on
$(\mathbb{A}^N)^{\otimes d}$ (cf. \cite{BW18}), we can obtain that
for $0<i,j \leq n$,
\begin{align}\label{rel:tij}
\tilde{t}_{ij}=\tilde{t}_{-i,-j}+(q-q^{-1})\tilde{t}_{i,-j},\quad
\tilde{t}_{i,-j}=\tilde{t}_{-i,j},\quad
\tilde{t}_{i0}=q\tilde{t}_{-i,0},\quad
\tilde{t}_{0j}=q\tilde{t}_{0,-j}.
\end{align}

Recall $\Xi_{n,d}^\jmath$ in \eqref{xind} and denote $$\Xi_n^{\jmath}=\bigsqcup_{d=0}^\infty\Xi_{n,d}^{\jmath}.$$
For $A\in\Xi_n^{\jmath}$, denote
$\tilde{t}^{(A)}=\tilde{\ve}\prod_{(i,j)
\geq (0,0)}^{<} t_{ij}^{a_{ij}^\sharp}$ where $a_{ij}^\sharp$ has been defined in \eqref{asharp}.
\begin{thm}
	\label{basis}
	The set $\{\tilde{t}^{(A)} ~|~ A \in \Xi_n^\jmath\}$ forms an $\mathbb{A}$-basis of
$\mathcal{T}_n^\jmath$.
\end{thm}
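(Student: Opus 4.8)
The plan is to prove that $\{\tilde t^{(A)}\mid A\in\Xi_n^\jmath\}$ is a basis by a dimension/spanning argument in each graded piece, mirroring the type~A basis theorem for $\mathcal T_n$ (Lemma on $t^{(A)}$) but tracking the coideal structure through the surjection onto the $\imath$Schur algebra. Write $\mathcal T_{n,d}^\jmath$ for the span of $\{\tilde t^{(A)}\mid A\in\Xi_{n,d}^\jmath\}$, so that $\mathcal T_n^\jmath=\bigoplus_d\mathcal T_{n,d}^\jmath$ by the grading of $\mathcal T_N$ (each $t_{ij}$ has degree $1$, $\tilde\ve$ has degree $0$). The target cardinality is $\#\Xi_{n,d}^\jmath$, which by \eqref{bijection} equals $\dim{}_\mathbb{A}\!\Sc_{n,d}^\jmath$ (taking $m=n$), equivalently $\dim{}_\mathbb{A}\!\T_{n|n,d}^{\jmath\jmath}$.

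\textbf{Step 1 (spanning).}
First I would show $\mathcal T_{n,d}^\jmath$ is spanned by the $\tilde t^{(A)}$. By definition $\mathcal T_n^\jmath=\tilde\ve\cdot\mathcal T_N$ is right-cyclic over $\mathcal T_N$, which is generated by the $t_{ij}$ with $-n\le i,j\le n$; so $\mathcal T_{n,d}^\jmath$ is spanned by the $\tilde\ve\,t_{i_1j_1}\cdots t_{i_dj_d}$. Using the straightening relations for $\mathcal T_N$ (i.e. \eqref{tij}, with the present index shift) together with the four $\imath$-relations \eqref{rel:tij}, one rewrites any such monomial: \eqref{rel:tij} lets one eliminate all $\tilde t_{ij}$ with $i<0$ or with $i=0<j$, pushing everything into the index range $(i,j)\ge(0,0)$ (lexicographically), and the commutation relations of \eqref{tij} then reorder the surviving factors into the prescribed $<$-order, picking up only $q$-powers and lower terms. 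The bookkeeping is that $\tilde t_{0j}=q\tilde t_{0,-j}$ and $\tilde t_{i0}=q\tilde t_{-i,0}$ collapse the ``$0$-row/column'' so that only $a_{00}^\sharp=(a_{00}-1)/2$ genuinely free exponents remain there, which is exactly what the definition of $\tilde t^{(A)}$ and of $\Xi_{n,d}^\jmath$ (the condition $a_{ij}=a_{-i,-j}$) encode. This gives a spanning set indexed by $\Xi_{n,d}^\jmath$.

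\textbf{Step 2 (linear independence via the $\imath$Schur action).}
Then I would establish linear independence by pairing with $\mathcal T_N$'s action, or more directly by using the $\qU_n^\jmath$-action of Lemma~\ref{Uj1}(1) together with the explicit coproduct computations $E_i\cdot\tilde t_{jk}$, etc., in the style of Proposition~\ref{actiononta}: computing $e_i,f_i,d_j$ acting on $\tilde t^{(A)}$ produces, modulo lower-order terms, a nonzero multiple of $\tilde t^{(A\pm E^\theta)}$, so the transition matrix between the standard and monomial descriptions is unitriangular with respect to a suitable order, forcing independence. Alternatively, and more cleanly, one composes the cyclic-generation map with the pairing to $\Sc_{n,d}^\jmath$: matrix coefficients of $(\mathbb{A}^N)^{\otimes d}$ restricted to $\qU_n^\jmath$ separate the elements of $\Sc_{n,d}^\jmath$, and counting shows a spanning set of the right size over a free $\mathbb{A}$-module of that rank must be a basis. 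Since $\#\Xi_{n,d}^\jmath=\dim{}_\mathbb{A}\!\Sc_{n,d}^\jmath$, a surjection from the span onto an $\mathbb{A}$-free module of equal finite rank, with the domain spanned by that many elements, is forced to be an isomorphism.

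\textbf{Main obstacle.}
The hard part will be Step~1: proving that the straightening procedure \emph{terminates} and lands on exactly the index set $\Xi_n^\jmath$ with the $a_{ij}^\sharp$-normalization, i.e. that the relations \eqref{tij}--\eqref{rel:tij} suffice to reduce every monomial $\tilde\ve\,t_{i_1j_1}\cdots t_{i_dj_d}$ to a combination of the ordered normal forms. One must set up a monomial order on words in the $t_{ij}$ for which every application of \eqref{tij} or \eqref{rel:tij} is strictly decreasing on the non-normal part, which requires care because \eqref{rel:tij} trades a single generator for two (but both with ``smaller'' first index), and because the $0$-row/column relations interact with the symmetry $a_{ij}=a_{-i,-j}$. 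Once a Noetherian order is in hand the argument is routine; identifying that order, and checking the relations are compatible with it, is where the real work lies. I would handle it by first normalizing all $0$-indices (a terminating rewriting since each step strictly lowers $|i|$ or $|j|$), then invoking the type~A straightening of Lemma (on $t^{(A)}=t'^{(A)}$) within the positive block.
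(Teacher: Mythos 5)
Your Step 2 (linear independence) is a legitimate alternative to the paper's argument and is the genuinely different part: the paper proves independence by specializing at $q=1$ to the classical coordinate ring $\mathcal{P}\cong\K[x_{ij}\mid -n\le i,j\le n]/(x_{ij}-x_{-i,-j})$ of $U(\mathfrak{gl}_{n+1}\oplus\mathfrak{gl}_n)$, where the monomials $x^{(A)}$ are visibly independent, and then clears denominators; you instead count dimensions, noting that $\mathcal{T}_{n,d}^\jmath=\tilde{\ve}\,\mathcal{T}_{N,d}$ is exactly the space of matrix coefficients of $(\mathbb{A}^N)^{\otimes d}$ restricted to $\qU_n^\jmath$, whose dimension equals that of the image of $\qU_n^\jmath$ in $\End((\mathbb{A}^N)^{\otimes d})$, i.e.\ $\dim {}_\mathbb{A}\!\Sc_{n,d}^\jmath=\#\Xi_{n,d}^\jmath$ by the $\jmath$-Schur duality of Bao--Wang and its geometric form in \cite{BKLW18}. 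Over the field $\mathbb{A}=\K(q)$ this does force a spanning set of that cardinality to be a basis, so your route works (and arguably packages independence more conceptually), but it leans entirely on Step 1 being complete.

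The genuine gap is in Step 1, precisely where you flag it. First, the relations \eqref{rel:tij} are identities of the \emph{restricted} functionals, i.e.\ elements of $\qU_n^{\jmath\circ}$: because $\qU_n^\jmath$ is only a right coideal, the elements $t_{ij}-t_{-i,-j}-(q-q^{-1})t_{i,-j}$, etc., generate merely a \emph{right} ideal of $\mathcal{T}_N$ killed by $\tilde{\ve}\cdot(-)$ (this is exactly the Corollary following the theorem). Hence in a word $\tilde{\ve}\,t_{i_1j_1}\cdots t_{i_dj_d}$ these relations may be applied only to the leftmost factor; a rewriting scheme must interleave the commutations \eqref{tij} to bring offending factors to the front, and your sketch treats \eqref{rel:tij} as two-sided rules on arbitrary factors. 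Second, your proposed termination mechanism does not match the relations: the substantive relations $\tilde{t}_{ij}=\tilde{t}_{-i,-j}+(q-q^{-1})\tilde{t}_{i,-j}$ and $\tilde{t}_{i,-j}=\tilde{t}_{-i,j}$ involve no zero indices and do not lower $|i|$ or $|j|$, the normal form $\tilde{t}^{(A)}$ is supported not on a ``positive block'' but on all $(i,j)\ge(0,0)$ lexicographically (rows $i>0$ with arbitrary column, plus row $0$ with $j\ge 0$), and your parenthetical ``eliminate $i=0<j$'' is backwards relative to that normal form. The paper closes exactly this hole with a short induction on $d$ via $\mathcal{T}_{N,d}=\mathcal{T}_{N,d-1}\mathcal{T}_{N,1}$: one only needs to straighten $\tilde{t}^{(B)}t_{ij}$ for $B\in\Xi_{n,d-1}^\jmath$, and this is done by a recursion on the lex-maximal nonzero entry $(k,l)$ of $B$, using \eqref{tij} to produce terms lying in $\mathcal{T}_{n,d-1}^\jmath t_{kl}+\mathcal{T}_{n,d-1}^\jmath t_{kj}$. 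Supplying an argument of this kind (or a correctly chosen Noetherian order compatible with the left-end-only constraint) is the missing content of your proposal.
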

\begin{proof}
	Firstly we will show that the set spans $\mathcal{T}_n^\jmath$. For this purpose, we
just need to show that $\{\tilde{t}^{(A)} ~|~ A \in \Xi_{n,d}^\jmath \}$ spans
$\mathcal{T}_{n,d}^\jmath$ where $\mathcal{T}_{n,d}^\jmath=\tilde{\ve}\mathcal{T}_{N,d}$.
It holds for $d=1$ because of \eqref{rel:tij}. Assume the statement holds for $d-1$ and
we shall prove the case of $d$. Owing to $\mathcal{T}_{N,d}=\mathcal{T}_{N,d-1}
\mathcal{T}_{N,1}$, it suffices to show $\tilde{t}^{(B)} t_{ij}\in \langle\tilde{t}^{(A)}
~|~ A \in \Xi_{n,d}^\jmath \rangle$ for any $B\in\Xi_{n,d-1}^\jmath$, which is clear if $(i,j)=(n,n)$
since $\tilde{t}^{(B)} t_{nn}=\tilde{t}^{(B+E_{nn}^\theta)}$. We take $(k,l)$ to be the
maximal such that the $(k,l)$-th entry of $B$ is nonzero. Then either $\tilde{t}^{(B)}
t_{ij}=\tilde{t}^{(B+E_{ij}^\theta)}$ (if $(i,j)\geq(k,l)$) or $\tilde{t}^{(B)}
t_{ij}\in \mathcal{T}_{n,d-1}^\jmath t_{kl}+\mathcal{T}_{n,d-1}^\jmath t_{kj}$ (if
$(i,j)<(k,l)$) thanks to \eqref{tij}. Therefore
$\tilde{t}^{(B)} t_{ij} \in \langle\tilde{t}^{(A)} ~|~ A \in \Xi_{n,d}^{\jmath} \rangle$ via
recursion on $(k,l)$.
	
Next let us show that $\{\tilde{t}^{(A)} ~|~ A \in \Xi_n^\jmath \}$ are linearly independent.
	Denote $\mathcal{P}$ the coordinate algebra of $U(\mathfrak{gl}_{n+1} \oplus
\mathfrak{gl}_n)$ generated by the matrix elements $x_{ij}$ of the representation
$\K^N$, where $\K^N$ is the natural representation of $\mathfrak{gl}_N$. It
is not difficult to see
	$$\mathcal{P} \cong \K[x_{ij}|-n \leq i,j \leq n]/(x_{ij}-x_{-i,-j}).$$
	Write $x^{(A)}:=\prod_{(i,j) \geq (0,0)} x_{ij}^{a_{ij}^\sharp}$ ($A \in \Xi_n^\jmath$),
which form a basis of $\mathcal{P}$.
	
	Let $\sum_{A \in \Xi_n^\jmath} \kappa_A \tilde{t}^{(A)}=0$ be a finite sum. We may assume
that $\kappa_A \in \mathbb{K}[q]$ but not all $\kappa_A \in (q-1)\mathbb{K}[q]$
(multiplying a $q$-fraction if necessary). But $0=(\sum_{A \in \Xi_n^\jmath}
\kappa_A\tilde{t}^{(A)})|_{q=1}=\sum_{A \in \Xi_n^\jmath} \kappa_A|_{q=1} x^{(A)}$ implies
$\kappa_A|_{q=1}=0$ for all $\kappa_A$, a contradiction to our assumption. Therefore
$\{\tilde{t}^{(A)} ~|~ A \in \Xi_n^\jmath \}$ must be linearly independent. Hence it is a basis of $\mathcal{T}_n^\jmath$.
\end{proof}

\begin{cor}
  As coalgebras, $\mathcal{T}_n^\jmath\cong\mathcal{T}_N/\mathcal{I}$ where $\mathcal{I}$ is the right ideal of $\mathcal{T}_N$ generated by
  $$t_{ij}-t_{-i,-j}+(q^{-1}-q)t_{i,-j},\quad t_{i0}-qt_{-i,0},\quad t_{0,j}-qt_{0,-j},\quad t_{i,-j}-t_{-i,j}, \quad (0<i,j\leq n).$$
\end{cor}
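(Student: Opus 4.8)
The strategy is to exhibit an explicit surjection $\mathcal{T}_N\twoheadrightarrow\mathcal{T}_n^\jmath$ of coalgebras whose kernel is $\mathcal{I}$, where surjectivity is tautological from the construction and the equality of the kernel with $\mathcal{I}$ is a dimension count against the basis theorem. Consider the restriction map $\pi\colon\mathcal{T}_N\to\qU_n^{\jmath\circ}$, $f\mapsto\tilde f=f|_{\qU_n^\jmath}$. Since $\ve$ is the unit of $\mathcal{T}_N$ and $\tilde g\,f'=\widetilde{g f'}$ for $g,f'\in\mathcal{T}_N\subset\qU_N^\circ$ by the Lemma preceding Theorem~\ref{basis}, one has $\tilde f=\widetilde{\ve f}=\tilde\ve\, f$, so the image of $\pi$ is exactly $\tilde\ve\,\mathcal{T}_N=\mathcal{T}_n^\jmath$; thus $\pi$ is surjective, and it sends the lift $t^{(A)}:=\prod_{(i,j)\ge(0,0)}^{<}t_{ij}^{a_{ij}^\sharp}\in\mathcal{T}_N$ to $\tilde t^{(A)}$ for every $A\in\Xi_n^\jmath$. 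The same Lemma gives $\widetilde{\Delta^{\circ}(f)}=\Delta^{\jmath\circ}(\tilde f)$, and compatibility with counits is immediate as $1\in\qU_n^\jmath$; hence $\pi$ is a homomorphism of coalgebras, and also of right $\mathcal{T}_N$-modules.

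Next I would check $\mathcal{I}\subseteq\ker\pi$. Because $\pi$ is a map of right $\mathcal{T}_N$-modules, $\ker\pi$ is a right ideal, so it suffices to verify that $\pi$ annihilates the listed generators; but for $0<i,j\le n$ these are precisely the relations~\eqref{rel:tij}: $\pi(t_{ij}-t_{-i,-j}+(q^{-1}-q)t_{i,-j})=\tilde t_{ij}-\tilde t_{-i,-j}-(q-q^{-1})\tilde t_{i,-j}=0$, and likewise $\pi(t_{i0}-qt_{-i,0})=\pi(t_{0j}-qt_{0,-j})=\pi(t_{i,-j}-t_{-i,j})=0$. Therefore $\pi$ descends to a surjection of coalgebras $\bar\pi\colon\mathcal{T}_N/\mathcal{I}\twoheadrightarrow\mathcal{T}_n^\jmath$ with $\bar\pi([t^{(A)}])=\tilde t^{(A)}$.

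It remains to show $\bar\pi$ is injective, and for this it is enough to prove that $\{[t^{(A)}]\mid A\in\Xi_n^\jmath\}$ spans $\mathcal{T}_N/\mathcal{I}$: then $\bar\pi$ carries this spanning set bijectively onto the $\mathbb{A}$-basis $\{\tilde t^{(A)}\mid A\in\Xi_n^\jmath\}$ of $\mathcal{T}_n^\jmath$ furnished by Theorem~\ref{basis}, which forces $\{[t^{(A)}]\}$ to be linearly independent and $\bar\pi$ to be an isomorphism of vector spaces. This spanning statement is obtained by rerunning, in the quotient $\mathcal{T}_N/\mathcal{I}$, the recursion used for the spanning half of the proof of Theorem~\ref{basis}: that recursion uses only the commutation relations~\eqref{tij}, which hold in $\mathcal{T}_N$ after the index shift and hence in any quotient, together with the degree-one relations~\eqref{rel:tij}, whose lifts to $\mathcal{T}_N$ are precisely the chosen generators of $\mathcal{I}$; moreover $\mathcal{I}$ is homogeneous, being generated in degree one, so the induction on $d$ proceeds exactly as in that proof. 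Concretely, writing a degree-$d$ element as a combination of products $[t^{(B)}]\,[t_{ij}]$ with $B\in\Xi_{n,d-1}^\jmath$, one first uses the relations in $\mathcal{I}$ to rewrite any $[t_{ij}]$ with $(i,j)<(0,0)$ as a combination of $[t_{kl}]$ with $(k,l)\ge(0,0)$, then uses~\eqref{tij} to bring factors into the prescribed order, reducing to monomials $[t^{(A)}]$ with $A\in\Xi_{n,d}^\jmath$. Finally, since $\pi$ is a coalgebra map and now $\mathcal{I}=\ker\pi$, the right ideal $\mathcal{I}$ is a coideal, $\mathcal{T}_N/\mathcal{I}$ carries the quotient coalgebra structure, and $\bar\pi$ is an isomorphism of coalgebras. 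The one point requiring care is to match, relation by relation, the generators of $\mathcal{I}$ with the identities in~\eqref{rel:tij} so that the recursion of Theorem~\ref{basis} transfers verbatim; everything else is formal manipulation in the cofinite dual.
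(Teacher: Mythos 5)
Your overall strategy is the intended one: the restriction map $\pi\colon\mathcal{T}_N\to\qU_n^{\jmath\circ}$, $f\mapsto\tilde f=\tilde{\ve}f$, is a coalgebra and right $\mathcal{T}_N$-module map with image $\mathcal{T}_n^\jmath$; the listed generators lie in $\ker\pi$ by \eqref{rel:tij}; and the isomorphism follows once $\{[t^{(A)}]\mid A\in\Xi_n^\jmath\}$ is shown to span $\mathcal{T}_N/\mathcal{I}$, by matching it against the basis of Theorem~\ref{basis}. Everything up to and including $\mathcal{I}\subseteq\ker\pi$ is correct. The gap is in your ``concrete'' execution of the spanning step: you take $[t^{(B)}t_{ij}]$ with $B\in\Xi^\jmath_{n,d-1}$ and, when $(i,j)<(0,0)$, rewrite the \emph{rightmost} factor $t_{ij}$ modulo $\mathcal{I}$. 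That move needs $t^{(B)}\cdot g\in\mathcal{I}$ for the degree-one generators $g$, i.e.\ it needs $\mathcal{I}$ to be a left ideal as well; but $\mathcal{I}$ is only a right ideal, and it genuinely is not two-sided. Indeed $\ker\pi$ is stable under right multiplication because $\Delta(\qU_n^\jmath)\subset\qU_n^\jmath\otimes\qU_N$, so $(gh)(x)=\sum g(x_{(1)})h(x_{(2)})=0$ for $g\in\ker\pi$ and $x\in\qU_n^\jmath$; but $(hg)(x)=\sum h(x_{(1)})g(x_{(2)})$ has its second legs only in $\qU_N$, where $g$ need not vanish. If your rewriting were valid for every $B$, then $\mathcal{I}$ would be two-sided and $\mathcal{T}_n^\jmath$ would inherit an algebra structure, contrary to the paper's explicit remark that it has none. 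So this step, as written, fails.

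The repair stays inside your framework: transfer the recursion of Theorem~\ref{basis} \emph{as it stands}, which never applies a degree-one relation to the last factor. In the quotient, congruences modulo $\mathcal{I}$ may be multiplied on the right, and that is all the recursion uses. For $[t^{(B)}t_{ij}]$ with $(k,l)$ the maximal nonzero position of $B$ (so $(k,l)\geq(0,0)$), either $(i,j)\geq(k,l)$ and one absorbs, $t^{(B)}t_{ij}=t^{(B+E_{ij}^\theta)}$, an identity already in $\mathcal{T}_N$; or $(i,j)<(k,l)$ and the relations \eqref{tij} (again identities in $\mathcal{T}_N$, valid in any quotient) express $t^{(B)}t_{ij}$ through degree-$(d-1)$ prefixes times a final factor $t_{kl}$ or $t_{kj}$. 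The prefixes are rewritten as combinations of $t^{(B')}$, $B'\in\Xi^\jmath_{n,d-1}$, by the induction hypothesis on $d$, and this congruence is then right-multiplied by the final factor, which is legitimate precisely because $\mathcal{I}$ is a right ideal. The generators of $\mathcal{I}$ enter only in the base case $d=1$, where they are used as elements of $\mathcal{I}$ themselves with no left factor present; this is where the negative positions are eliminated, exactly as \eqref{rel:tij} is used in the paper's proof. With the spanning step argued this way, the remainder of your proof is fine: $\bar\pi$ carries a spanning set onto the basis of Theorem~\ref{basis}, hence is bijective, $\mathcal{I}=\ker\pi$ is a coideal, and $\bar\pi$ is an isomorphism of coalgebras.
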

The above corollary shows that our quantum coordinate coalgebra coincides with the one introduced by Lai-Nakano-Xiang (see \cite[Proposition~2.4.4]{LNX22}).

\subsection{Quantum coordinate coalgebra $\mathcal{T}_n^\imath$}
Denote by $\mathcal{T}_{2n}$ the quantum coordinate algebra of $\qU_{2n}$ as defined in Subsection~\ref{qca} but with a shift by $-n-\half$ on the index, e.g. $$t_{ij}, \quad (i,j=-n+\half,-n+\frac{3}{2},\ldots,n-\half).$$
Recall the quantum coordinate algebra $\mathcal{T}_{N}$ of $\qU_{N}$ and its elements
$$t_{ij}, \quad (i,j=-n,-n+1,\ldots,n).$$
There exists a bialgebra epimorphism $\varrho: \mathcal{T}_{N}\rightarrow\mathcal{T}_{2n}$ determined by
$$t_{ij}\mapsto t_{i-\frac{\mathrm{sign}(i)}{2},j-\frac{\mathrm{sign}(j)}{2}},\quad t_{i0}\mapsto 0,\quad t_{0j}\mapsto0,\quad t_{00}\mapsto\ve.$$ Thus $\mathcal{T}_{N}$ admits a $\qU_{2n}$-bimodule algebra structure by
$$x \cdot f=\sum_{(f)} f_{(1)}\langle x, \varrho(f_{(2)})\rangle, \qquad f \cdot x=\sum_{(f)} \langle x, \varrho(f_{(1)})\rangle f_{(2)}.$$
Therefore, we shall identify $\mathcal{T}_{2n}$ with the subalgebra of $\mathcal{T}_{N}$ generated by $\{t_{ij} ~|~ i,j \neq 0\}$ as a $\qU_{2n}$-bimodule algebra.

Recall $\Xi_{n,d}^\imath$ in \eqref{xind} and denote $$\Xi_n^{\imath}=\bigsqcup_{d=0}^\infty\Xi_{n,d}^{\imath}.$$
We denote by $\mathcal{T}_n^\imath$ the $\mathcal{T}_{2n}$-module with basis $\{\tilde{t}^{(A)} ~|~ A \in \Xi_n^\imath\}$. It is clear that $\mathcal{T}_n^\imath \cong \mathcal{T}_{2n}/(\mathcal{T}_{2n} \bigcap \mathcal{I})$. So $\mathcal{T}_n^\imath$ can be regarded as a subspace of $\mathcal{T}_n^\jmath$. Actually, $\mathcal{T}_n^\imath$ is a $\qU_n^\imath$-bimodule (it is a special case of Proposition~\ref{Uj2}).

\subsection{The $\mathbb{A}$-space $\mathcal{V}_{m|n}^{\mathfrak{bc}}$}
Let $s=\max\{m,n\}$. Recall $\Xi_{m|n,d}^{\mathfrak{bc}}$ in \eqref{xind} and denote $$\Xi_{m|n}^{\mathfrak{bc}}=\bigsqcup_{d=0}^\infty\Xi_{m|n,d}^{\mathfrak{bc}},\quad (\mathfrak{b,c}\in\{\imath,\jmath\}),$$ which can be regarded as subsets of $\Xi_s^\jmath$ by the natural way. Let $\mathcal{V}_{m|n}^\mathfrak{bc}$ be the subspace of $\mathcal{T}_s^\jmath$ with basis $\{\tilde{t}^{(A)} ~|~ A \in \Xi_{m|n}^\mathfrak{bc}\}$.
\begin{prop}\label{Uj2}
Let $A \in \Xi_{m|n}^\mathfrak{bc}$ $(\mathfrak{b,c}\in\{\imath,\jmath\})$. For $e_i,f_i
	\in \qU_n^\mathfrak{c}$, we have
	\begin{align*}
	e_i \cdot \tilde{t}^{(A)}=&\sum_{-n \leq j \leq n;a_{j,i+1}>0} q^{\sum_{k>j} (a_{k,i+1}-a_{ki})} [a_{j,i+1}]\tilde{t}^{(A+E_{ji}^\theta-E_{j,i+1}^\theta)},\\
	f_i \cdot \tilde{t}^{(A)}=&\sum_{-n \leq j \leq 0;a_{ji}^\sharp>0} q^{\sum_{k<j} (a_{ki}-a_{k,i+1})} [a_{ji}-\delta_{0i}\delta_{0j}]\tilde{t}^{(A+E_{j,i+1}^\theta-E_{ji}^\theta)}\\
	&+\sum_{0<j \leq n;a_{ji}>0} q^{\sum_{k<j} (a_{ki}-a_{k,i+1})-\delta_{0i}}[a_{ji}]\tilde{t}^{(A+E_{j,i+1}^\theta-E_{ji}^\theta)}.
\end{align*} For $e_i,f_i
	\in \qU_m^\mathfrak{b}$, we have
\begin{align*}
	\tilde{t}^{(A)} \cdot e_i=&\sum_{-n \leq j \leq 0;a_{ij}^\sharp>0}q^{\sum_{k \geq j} (a_{i+1,k}-a_{ik})+1+\delta_{0i}}[a_{ij}-\delta_{0i}\delta_{0j}]\tilde{t}^{(A+E_{i+1,j}^\theta-E_{ij}^\theta)}\\
	&+\sum_{0<j \leq n;a_{ij}>0}q^{\sum_{k \geq j} (a_{i+1,k}-a_{ik})+1}[a_{ij}]\tilde{t}^{(A+E_{i+1,j}^\theta-E_{ij}^\theta)},\\
	\tilde{t}^{(A)} \cdot f_i=&\sum_{-n \leq j \leq n;a_{i+1,j}>0} q^{\sum_{k \leq j} (a_{ik}-a_{i+1,k})+1}[a_{i+1,j}]\tilde{t}^{(A+E_{ij}^\theta-E_{i+1,j}^\theta)}.
\end{align*}
Moreover, if $\mathfrak{c}=\imath$, then for $t_0\in\qU_n^\imath$, we have
	\begin{align*}t_0 \cdot \tilde{t}^{(A)}=&q^{\sum_{j>0} (a_{j1}-a_{j,-1})+a_{01}}\tilde{t}^{(A)}\\
&+\sum_{-n \leq j \leq n;a_{j1}>0} q^{\sum_{k>j} (a_{j1}-a_{j,-1})-a_{j0}+\sum_{k<0} \delta_{jk}} [a_{j1}]\tilde{t}^{(A+E_{j,-1}^\theta)-E_{j1}^\theta};
\end{align*}
if $\mathfrak{b}=\imath$, then for $t_0\in\qU_m^\imath$, we have
\begin{align*}
\tilde{t}^{(A)} \cdot t_0=&q^{\sum_{j>0} (a_{1j}-a_{-1,j})+a_{10}}\tilde{t}^{(A)}\\&
+\sum_{-n \leq j \leq n;a_{1j}>0} q^{\sum_{k>j} (a_{1j}-a_{-1,j})-a_{0j}+\sum_{k<0} \delta_{jk}} [a_{1j}]\tilde{t}^{(A+E_{-1,j}^\theta-E_{1j}^\theta)}.
	\end{align*}
	Thus the $\mathbb{A}$-space $\mathcal{V}_{m|n}^\mathfrak{bc}$ forms a $(\qU_n^\mathfrak{c},\qU_m^\mathfrak{b})$-module.
\end{prop}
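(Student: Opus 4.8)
The plan is to reproduce, in the $\imath$-setting, the argument behind Proposition~\ref{actiononta}: evaluate the action first on the ``linear'' matrix elements $\tilde{t}_{jk}$, then on a power of a single such element, and finally on an arbitrary monomial $\tilde{t}^{(A)}$; the asserted bimodule structure is then read off the resulting formulas. The input is that $\mathcal{T}_s^\jmath$ is a $\qU_s^\jmath$-bimodule via $x\cdot f=\sum_{(f)}f_{(1)}\langle f_{(2)},x\rangle$ and $f\cdot x=\sum_{(f)}\langle f_{(1)},x\rangle f_{(2)}$ (Lemma~\ref{Uj1}), together with $\Delta^{\jmath\circ}(\tilde{t}_{jk})=\sum_l\tilde{t}_{jl}\otimes\tilde{t}_{lk}$; here $e_i,f_i,t_0\in\qU_n^{\mathfrak c}$ (resp.\ $\qU_m^{\mathfrak b}$) act through their defining embeddings into $\qU_N$ or $\qU_{2n}$, and only the column indices (resp.\ row indices) of the $\tilde{t}_{jk}$ are moved.

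First I would compute on generators: feeding the pairing values \eqref{tije} into $x\cdot\tilde{t}_{jk}=\sum_l\tilde{t}_{jl}\langle\tilde{t}_{lk},x\rangle$ and using $e_i=E_{i+\half}+K_{i+\half}^{-1}F_{-(i+\half)}$ (in type AIV; similarly in type AIII, with $t_0=E_0+qF_0K_0^{-1}+K_0^{-1}$), the $E_{i+\half}$-summand produces the column shift $i+1\mapsto i$, the $K_{i+\half}^{-1}F_{-(i+\half)}$-summand produces the reflected shift $-i-1\mapsto -i$ weighted by the $q$-power of the diagonal $K^{-1}$-action, and the extra $K_0^{-1}$ in $t_0$ contributes a scalar; one then rewrites the outputs in the standard basis $\{\tilde{t}_{kl}:(k,l)\ge(0,0)\}$ via \eqref{rel:tij}, and it is exactly the $\theta$-symmetry $a_{ij}=a_{-i,-j}$ that makes the $E$-contributions and the $K^{-1}F$-contributions assemble into the single sum over $j\in[-n,n]$ in the statement. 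Next, for a power $\tilde{t}_{jk}^{\,a}$, I would iterate the comultiplication of $x$ in the ambient quantum group; for $e_i$ of type AIV one has $\Delta(e_i)=e_i\otimes K_{i+\half}^{-1}+1\otimes E_{i+\half}+k_i^{-1}\otimes K_{i+\half}^{-1}F_{-(i+\half)}$, so by coassociativity exactly one of the $a$ factors is hit by the $E$- or $K^{-1}F$-part, the factors before it carrying a power of $1$ or $k_i^{-1}$ and those after it a power of $K_{i+\half}^{-1}$ (resp.\ $K_{-(i+\half)}^{-1}$); summing the resulting $D$-weights over the hit factor yields the quantum integer $[a]$, or $[a-\delta_{0i}\delta_{0j}]$ when the active entry lies at the index $0$ because of the $\sharp$-shift \eqref{asharp}, times a geometric $q$-power. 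Substituting this into $\tilde{t}^{(A)}=\tilde{\ve}\prod_{(i,j)\ge(0,0)}^{<}t_{ij}^{a_{ij}^\sharp}$ — whose lexicographic factor ordering is chosen so that everything before the active block picks up $k_i^{-1}$- or $1$-weights and everything after picks up $K_i^{-1}$-weights — gives the displayed exponents $\sum_{k>j}(a_{k,i+1}-a_{ki})$, $\sum_{k<j}(a_{ki}-a_{k,i+1})$, and their variants; for the right $\qU_m^{\mathfrak b}$-action one also reorders factors using \eqref{tij}, exactly as Lemma~\ref{t=tprime} was used in the proof of Proposition~\ref{actiononta}. The $t_0$-formulas come out the same way from $\Delta(t_0)=t_0\otimes K_0^{-1}+1\otimes(E_0+qF_0K_0^{-1})$, the recursive $K_0^{-1}\otimes(-)$-term being responsible for the diagonal summand $q^{\sum_{j>0}(a_{j1}-a_{j,-1})+a_{01}}\tilde{t}^{(A)}$.

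Finally, once the formulas are established the last assertion follows: they show that the $\qU_n^{\mathfrak c}$- and $\qU_m^{\mathfrak b}$-actions induced on $\mathcal{T}_s^\jmath$ as in Lemma~\ref{Uj1} preserve the span of $\{\tilde{t}^{(A)}:A\in\Xi_{m|n}^{\mathfrak{bc}}\}$, so $\mathcal{V}_{m|n}^{\mathfrak{bc}}$ is a sub-bimodule; the two actions commute because one moves only column indices and the other only row indices. The genuine obstacle is purely computational: controlling the $q$-exponents through the two recursions, in particular the $\sharp$-correction and the $\delta_{0i},\delta_{0j}$ corrections forced by the exceptional behaviour at the index $0$, together with the $(q-q^{-1})$-type terms generated whenever \eqref{rel:tij} is invoked to return to the standard basis.
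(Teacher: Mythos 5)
Your plan is essentially the paper's own proof: the paper likewise computes directly, using Lemma~\ref{Uj1} to push the coideal comultiplication of $e_i,f_i,t_0$ through the factors of $\tilde{t}^{(A)}=\tilde{\ve}\prod^{<}t_{ij}^{a_{ij}^\sharp}$ (equivalently, acting with the ambient $E,F,K$-components on the type~A monomial and then applying $\tilde{\ve}$), invoking the type~A action formulas, the relations \eqref{rel:tij} and the $\theta$-symmetry to return to the standard basis, with the $\sharp$- and $\delta_{0i},\delta_{0j}$-corrections arising exactly as you describe; the bimodule claim is then read off the resulting formulas. Your sketch is correct and differs only in presentation (you redo the generator--power--monomial recursion rather than quoting Proposition~\ref{actiononta} after stripping $\tilde{\ve}$), so no substantive gap.
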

\begin{proof}
  Since the index sets are different between $\qU_N$ and $\qU_{2n}$, we shall deal with type $\jmath$ as a sample in the computation below, the computation for type $\imath$ is almost the same. We give a detail computation for $f_i \cdot \tilde{t}^{(A)}$ as follows:
	\begin{align*}
	f_i \cdot
	\tilde{t}^{(A)}=&\tilde{\ve}((E_{-(i+\half)}+F_{i+\half}K_{-(i+\half)}^{-1}) \cdot
	\prod_{(j,k) \geq (0,0)}^{<} t_{jk}^{a_{jk}^\sharp})
	\\
	=&\sum_{\stackrel{0<j \leq n;}{a_{j,-i}>0}} q^{\sum_{k>j} (a_{k,-i}-a_{k,-i-1})}
	[a_{j,-i}]\tilde{t}^{(A+E_{j,-i-1}^\theta-E_{j,-i}^\theta)}\\
	&+\delta_{0i}q^{\sum_{k>0} (a_{k0}-a_{k,-1})-a_{00}^\sharp}[a_{00}^\sharp]\tilde{t}^{(A+E_{0,-1}^\theta-E_{00}^\theta)}\\
	&+q^{\sum_{0<k\leq n}(a_{k,-i}-a_{k,-i-1})+\delta_{0i}a_{00}^\sharp}
	\sum_{\stackrel{0 \leq j \leq n;}{a_{ji}^\sharp>0}} q^{\sum_{0 \leq k<j} (a_{ki}^\sharp-a_{k,i+1})}[a_{ji}^\sharp]\tilde{t}^{(A+E_{j,i+1}^\theta-E_{ji}^\theta)}
	\\
	=&\sum_{\stackrel{-m \leq j<0;}{a_{ji}>0}} q^{\sum_{k<j}
		(a_{ki}-a_{k,i+1})}[a_{ji}]\tilde{t}^{(A+E_{j,i+1}^\theta-E_{ji}^\theta)}\\
	&+\delta_{0i}q^{\sum_{k<0} (a_{k0}-a_{k1})-a_{00}^\sharp}[a_{00}^\sharp]\tilde{t}^{(A+E_{01}^\theta-E_{00}^\theta)}\\
	&+q^{\sum_{-n \leq k<0}(a_{ki}-a_{k,i+1})+\delta_{0i}a_{00}^\sharp} \sum_{\stackrel{0
			\leq j \leq n;}{a_{ji}^\sharp>0}} q^{\sum_{0 \leq k<j} (a_{ki}^\sharp-a_{k,i+1})}[a_{ji}^\sharp]\tilde{t}^{(A+E_{j,i+1}^\theta-E_{ji}^\theta)}
	\\
	=&\sum_{\stackrel{-n \leq j \leq 0;}{a_{ji}^\sharp>0}} q^{\sum_{k<j}(a_{ki}-a_{k,i+1})}[a_{ji}-\delta_{0i}\delta_{0j}]\tilde{t}^{(A+E_{j,i+1}^\theta-E_{ji}^\theta)}\\
	&+\sum_{\stackrel{0<j \leq n;}{a_{ji}>0}} q^{\sum_{k<j}(a_{ki}-a_{k,i+1})-\delta_{0i}}[a_{ji}]\tilde{t}^{(A+E_{j,i+1}^\theta-E_{ji}^\theta)}.
	\end{align*}
	The computation for the other formulas is similar.
\end{proof}

For any $A=(a_{ij})\in \Xi_{m|n}^\mathfrak{bc}$, denote
\begin{equation*}
  \langle A\rangle=q^{\frac{(\ro_0(A)-1)(\ro_0(A)+1)}{4}+\sum_{1 \leq i \leq m}
\frac{\ro_i(A)(\ro_i(A)+1)}{2}}
\frac{\tilde{t}^{(A)}}{[a_{00}-1]!!\prod_{(i,j)>(0,0)}[a_{ij}]!} \in
\mathcal{V}_{m|n}^\mathfrak{bc},
\end{equation*}
where $\ro_i(A)=\sum_{-n \leq j \leq n} a_{ij}$. With this new notation, we can rewrite
the above proposition as follows.
\begin{cor} Let $A \in \Xi_{m|n}^\mathfrak{bc}$, $(\mathfrak{b,c}\in\{\imath,\jmath\})$. For $e_i,f_i\in \qU_n^\mathfrak{c}$, we have
	\begin{align*}
	e_i \cdot \langle A\rangle=&\sum_{-m \leq j \leq m;a_{j,i+1}>0} q^{\sum_{k>j}
(a_{k,i+1}-a_{ki})}[a_{ji}+1]\langle A+E_{ji}^\theta-E_{j,i+1}^\theta\rangle,
	\\
	f_i \cdot \langle A\rangle=&\sum_{-m\leq j \leq 0;a_{ji}^\sharp>0} q^{\sum_{k<j}
(a_{ki}-a_{k,i+1})}[a_{j,i+1}+1]\langle A+E_{j,i+1}^\theta-E_{ji}^\theta\rangle\\
	&+\sum_{0<j \leq m;a_{ji}>0} q^{\sum_{k<j}
(a_{ki}-a_{k,i+1})-\delta_{0i}}[a_{j,i+1}+1]\langle
A+E_{j,i+1}^\theta-E_{ji}^\theta\rangle;
\end{align*}
and for $e_i,f_i\in \qU_m^\mathfrak{b}$, we have
\begin{align*}
	\langle A\rangle \cdot e_i=&\sum_{-n \leq j \leq 0;a_{ij}^\sharp>0} q^{\sum_{k<j}
(a_{ik}-a_{i+1,k})}[a_{i+1,j}+1]\langle A+E_{i+1,j}^\theta-E_{ij}^\theta\rangle\\
	&+\sum_{0<j \leq n;a_{ij}>0} q^{\sum_{k<j}
(a_{ik}-a_{i+1,k})-\delta_{0i}}[a_{i+1,j}+1]\langle
A+E_{i+1,j}^\theta-E_{ij}^\theta\rangle,
	\\
	\langle A\rangle \cdot f_i=&\sum_{-n \leq j \leq n;a_{i+1,j}>0} q^{\sum_{k>j}
(a_{i+1,k}-a_{ik})}[a_{ij}+1]\langle A+E_{ij}^\theta-E_{i+1,j}^\theta\rangle.
	\end{align*}
Moreover, if $\mathfrak{c}=\imath$, then for $t_0\in\qU_n^\imath$, we have
	\begin{align*}t_0 \cdot \langle A\rangle=&q^{\sum_{j>0} (a_{j1}-a_{j,-1})+a_{01}}\langle A\rangle\\
&+\sum_{-n \leq j \leq n;a_{j1}>0} q^{\sum_{k>j} (a_{j1}-a_{j,-1})-a_{j0}+\sum_{k<0} \delta_{jk}} [a_{j,-1}+1-\delta_{j0}]\langle A+E_{j,-1}^\theta-E_{j1}^\theta\rangle;
\end{align*}
if $\mathfrak{b}=\imath$, then for $t_0\in\qU_m^\imath$, we have
\begin{align*}
\langle A\rangle \cdot t_0=&q^{\sum_{j>0} (a_{1j}-a_{-1,j})+a_{10}}\langle A\rangle\\&
+\sum_{-n \leq j \leq n;a_{1j}>0} q^{\sum_{k>j} (a_{1j}-a_{-1,j})-a_{0j}+\sum_{k<0} \delta_{jk}} [a_{-1,j}+1-\delta_{j0}]\langle A+E_{-1,j}^\theta-E_{1j}^\theta\rangle.
	\end{align*}
\end{cor}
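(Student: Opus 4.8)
The corollary is just Proposition~\ref{Uj2} transported to the rescaled basis, so the plan is a careful bookkeeping of normalization constants. Writing $\langle A\rangle=\mu(A)\,\tilde{t}^{(A)}$ with
\[
\mu(A)=q^{\frac{(\ro_0(A)-1)(\ro_0(A)+1)}{4}+\sum_{1\leq i\leq m}\frac{\ro_i(A)(\ro_i(A)+1)}{2}}\Big([a_{00}-1]!!\prod_{(i,j)>(0,0)}[a_{ij}]!\Big)^{-1},
\]
one feeds $\langle A\rangle=\mu(A)\,\tilde{t}^{(A)}$ into the formulas of Proposition~\ref{Uj2} for the action of a generator of $\qU_n^{\mathfrak c}$ or $\qU_m^{\mathfrak b}$, and rewrites each output term $\tilde{t}^{(B)}=\mu(B)^{-1}\langle B\rangle$ back in the rescaled basis; every summand then picks up the scalar $\mu(A)/\mu(B)$, and what must be checked is simply that $\mu(A)/\mu(B)$ times the coefficient of $\tilde{t}^{(B)}$ in Proposition~\ref{Uj2} equals the coefficient of $\langle B\rangle$ claimed in the corollary. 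The only identities needed are $[a+1]!/[a]!=[a+1]$ and $[2k]!!/[2(k-1)]!!=[2k]$.

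First I would dispose of the generators $e_i,f_i$ with indices away from $0$. For the left $\qU_n^{\mathfrak c}$-action a box moves between columns $i$ and $i+1$ inside a fixed row, so every row sum $\ro_k$ is unchanged, the $q$-power in $\mu$ cancels entirely, and only two quantum factorials change; the ratio $\mu(A)/\mu(B)=[a_{ji}+1]/[a_{j,i+1}]$ turns the factor $[a_{j,i+1}]$ of Proposition~\ref{Uj2} into the factor $[a_{ji}+1]$ of the corollary, the $q$-exponents being copied verbatim. For the right $\qU_m^{\mathfrak b}$-action a box moves instead between rows $i$ and $i+1$ of a fixed column, so $\ro_i$ and $\ro_{i+1}$ each change by one and $\mu$ contributes an extra power of $q$ that is a linear function of those row sums; using $\ro_i=\sum_k a_{ik}$ this power telescopes the partial column sum $\sum_{k\leq j}(a_{ik}-a_{i+1,k})$ appearing in Proposition~\ref{Uj2} into the complementary partial sum $\sum_{k>j}(a_{i+1,k}-a_{ik})$ of the corollary, and simultaneously absorbs the constant shifts $\pm1$; the quantum integers are treated as above.

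The only genuinely delicate point --- and I expect it to be essentially the whole difficulty --- is the central row and column $0$. There $a_{00}$ is odd, $\mu$ involves the double factorial $[a_{00}-1]!!$, and Proposition~\ref{Uj2} carries the corrections $\delta_{0i},\delta_{0j}$ together with the $\sharp$-entries $a_{00}^\sharp=(a_{00}-1)/2$; moreover any box move touching index $0$ is exceptional because $E_{00}^\theta=2E_{00}$, so $a_{00}$ (or a relevant row sum) changes by $\pm2$ rather than $\pm1$. In these cases $[a_{00}\pm1]!!/[a_{00}\mp1]!!$ equals the single quantum integer $[a_{00}\pm1]$, and combining this with the surviving factorial ratio reconciles the $\sharp$-entries and $\delta$-corrections of Proposition~\ref{Uj2} with the uniform factors $[a_{ji}+1]$ and $[a_{j,-1}+1-\delta_{j0}]$ of the corollary. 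For the $t_0$-action one must additionally observe that the $j=0$ summand is exceptional, since $E_{0,-1}^\theta-E_{01}^\theta=0$ forces $B=A$, so $\mu(A)/\mu(B)=1$ and no extra $+1$ appears --- which is exactly what the correction $-\delta_{j0}$ in $[a_{j,-1}+1-\delta_{j0}]$ records --- while the ``diagonal'' coefficient $q^{\sum_{j>0}(a_{j1}-a_{j,-1})+a_{01}}$ survives the rescaling unchanged. Everything else is the mechanical substitution described above.
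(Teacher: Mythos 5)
Your proposal is correct and takes essentially the same route the paper intends: the corollary is presented as a direct rewriting of Proposition~\ref{Uj2} in the rescaled basis $\langle A\rangle$, and your bookkeeping of the ratio $\mu(A)/\mu(B)$ is exactly the verification left to the reader. In particular, your three key observations --- row sums are unchanged under the left action so only factorial ratios matter, the change in row sums under the right action telescopes $\sum_{k\leq j}$ into the complementary $\sum_{k>j}$ and absorbs the $+1$ shifts, and the central entry is handled by double-factorial ratios together with the degenerate $j=0$ term of the $t_0$-action where $E_{0,-1}^\theta=E_{01}^\theta$ forces $B=A$ --- all check out against the stated formulas.
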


For $\mathfrak{b,c}\in\{\imath,\jmath\}$ and $d\in\mathbb{N}$, let $\mathcal{V}_{m|n,d}^\mathfrak{bc}$ be the subspace of $\mathcal{V}_{m|n}^\mathfrak{bc}$ spanned by $\{\tilde{t}^{(A)} ~|~ A \in \Xi_{m|n,d}^\mathfrak{bc}\}$.
\begin{thm}\label{equivalentj}
	As $(\qU_n^\mathfrak{c},\qU_m^\mathfrak{b})$-modules,
	$$\mathcal{V}_{m|n}^\mathfrak{bc} \cong{}_\mathbb{A}\!\T_{n|m}^{\mathfrak{cb}}, \quad \mathcal{V}_{m|n,d}^\mathfrak{bc} \cong{}_\mathbb{A}\!\T_{n|m,d}^{\mathfrak{cb}}: \quad \langle
A\rangle \mapsto [A'].$$
\end{thm}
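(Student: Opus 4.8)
The plan is to follow the template of the type~A argument used to prove Theorem~\ref{equivalent}: first identify $\langle A\rangle\mapsto[A']$ as a graded $\mathbb{A}$-linear isomorphism, and then check it is equivariant by matching the explicit action formulas already recorded on both sides.

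First I would check that transposition $A\mapsto A'$ restricts to a bijection $\Xi_{m|n,d}^{\mathfrak{bc}}\xrightarrow{\ \sim\ }\Xi_{n|m,d}^{\mathfrak{cb}}$ for every $d$. Inspecting \eqref{xind}, the symmetry $a_{ij}=a_{-i,-j}$ and the total-sum condition are transpose-invariant, while the vanishing constraint $a_{i0}=0$ $(i\neq0)$ attached to the first factor becomes $a_{0j}=0$ $(j\neq0)$, which is exactly the constraint attached to the first factor of the transposed index set (and similarly for the remaining combinations of $\imath$ and $\jmath$). Since $\{\langle A\rangle\mid A\in\Xi_{m|n}^{\mathfrak{bc}}\}$ is an $\mathbb{A}$-basis of $\mathcal{V}_{m|n}^{\mathfrak{bc}}$ and $\{[B]\mid B\in\Xi_{n|m}^{\mathfrak{cb}}\}$ is an $\mathbb{A}$-basis of ${}_\mathbb{A}\!\T_{n|m}^{\mathfrak{cb}}$, this makes $\langle A\rangle\mapsto[A']$ a well-defined $\mathbb{A}$-linear isomorphism; as it preserves the grading by $d$, the graded statement will follow from the ungraded one.

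Next I would verify that this map intertwines the two $(\qU_n^{\mathfrak{c}},\qU_m^{\mathfrak{b}})$-bimodule structures. On $\mathcal{V}_{m|n}^{\mathfrak{bc}}$ these are the actions written out in the Corollary immediately preceding this theorem; on ${}_\mathbb{A}\!\T_{n|m}^{\mathfrak{cb}}$, by Theorem~\ref{doubleAIV} applied with $(m,n,\mathfrak{b},\mathfrak{c})$ replaced by $(n,m,\mathfrak{c},\mathfrak{b})$, the left $\qU_n^{\mathfrak{c}}$-action is ${}_\mathbb{A}\Phi\circ\kappa_{n,d}^{\mathfrak{c}}$ and the right $\qU_m^{\mathfrak{b}}$-action is ${}_\mathbb{A}\Psi\circ\kappa_{m,d}^{\mathfrak{b}}$. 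For a generator $e_i\in\qU_n^{\mathfrak{c}}$ I would compute $\kappa_{n,d}^{\mathfrak{c}}(e_i)\cdot[A']=\sum_Z[E_{i,i+1}^\theta+Z]\cdot[A']$ from \eqref{homo:kj} (resp.\ \eqref{kappai}); the condition $\co(E_{i,i+1}^\theta+Z)=\ro(A')$ singles out a \emph{unique} diagonal $Z$, so that Proposition~\ref{prop:actionsj}(1) (again with the two factors interchanged) applies directly, and after the substitutions $(A')_{ab}=a_{ba}$, $E_{ab}^\theta\leftrightarrow E_{ba}^\theta$ and $\ro\leftrightarrow\co$ the output should coincide term-by-term with the formula for $e_i\cdot\langle A\rangle$ in the Corollary. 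The generators $f_i$ are handled the same way, and, when $\mathfrak{c}=\imath$, the extra generator $t_0$ is handled using Proposition~\ref{prop:actionsj}(3) together with the two-term image of $t_0$ under $\kappa_{n,d}^\imath$; the right $\qU_m^{\mathfrak{b}}$-action is treated symmetrically via Proposition~\ref{prop:actionsj}(2)\&(3) and $\kappa_{m,d}^{\mathfrak{b}}$. Since the generators then act compatibly on the basis $\{\langle A\rangle\}$, this completes the identification of bimodules.

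The hard part will be the term-by-term comparison in the second step. It is pure bookkeeping but delicate because of the $0$-th node: one must keep track of the $\delta_{0i}$-corrections, of the passage between $a_{00}$ and $a_{00}^\sharp$ coming from \eqref{asharp}, and of the normalisation $q^{\frac{(\ro_0(A)-1)(\ro_0(A)+1)}{4}+\sum_{1\le i\le m}\frac{\ro_i(A)(\ro_i(A)+1)}{2}}/\big([a_{00}-1]!!\prod_{(i,j)>(0,0)}[a_{ij}]!\big)$ absorbed into $\langle A\rangle$ --- precisely the factor that turns the quantum integers $[a_{ji}]$ of Proposition~\ref{Uj2} into the $[a_{ji}+1]$ appearing both in the Corollary and in Proposition~\ref{prop:actionsj}. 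This is the type~B/C, $\imath$-node counterpart of the prefactor manipulation relating \eqref{def:varA} to Propositions~\ref{actiononta} and \ref{action1} in type~A; once the $q$-exponents are reconciled, nothing further is needed.
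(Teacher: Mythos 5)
Your proposal is correct and follows essentially the same route as the paper's own (very terse) proof: the paper also establishes the isomorphism by comparing the explicit formulas in the corollary following Proposition~\ref{Uj2} with Proposition~\ref{prop:actionsj}, via the homomorphisms \eqref{homo:kj} and \eqref{kappai}. Your additional remarks --- that transposition gives a degree-preserving bijection $\Xi_{m|n,d}^{\mathfrak{bc}}\to\Xi_{n|m,d}^{\mathfrak{cb}}$, that only one diagonal $Z$ survives in $\kappa_{n,d}^{\mathfrak{c}}(e_i)\cdot[A']$, and that the normalisation in $\langle A\rangle$ converts the $[a_{ji}]$ of Proposition~\ref{Uj2} into the $[a_{ji}+1]$ of Proposition~\ref{prop:actionsj} --- are exactly the bookkeeping the paper leaves implicit.
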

\begin{proof}
Comparing the above corollary with Proposition~\ref{prop:actionsj} together with the
homomorphisms $\kappa_{n,d}^\mathfrak{b}\ (\mathfrak{b}=\imath,\jmath)$ in \eqref{homo:kj} \& \eqref{kappai}, we obtain the desired isomorphisms.
\end{proof}

\section{Multiplicity-free decompositions of $\imath$Howe dualities} \label{section6}

\subsection{Classical highest weight module of $\qU_n^\jmath$}
By \cite{KP11}, there are automorphisms (a braid group action) $T_i^\jmath$, $(1\leq
i<n)$, on $\qU_n^\jmath$:
\begin{align*}
T_i^\jmath(e_j)&=
\begin{cases}
-f_i k_i, &\mbox{if} \quad j=i\\
[e_i,e_j]_{-1}, &\mbox{if} \quad |j-i|=1\\
e_j, &\mbox{otherwise}
\end{cases},
\quad
T_i^\jmath(f_j)=
\begin{cases}
-k_i^{-1} e_i, &\mbox{if} \quad j=i\\
[f_j,f_i]_1, &\mbox{if} \quad |j-i|=1\\
f_j, &\mbox{otherwise}
\end{cases},
\\
T_i^\jmath(d_j)&=d_{s_i(j)}.
\end{align*}
where $[x,y]_a=xy-q^ayx$.

Denote
$$t_0=[e_0,f_0]_1-\frac{k_0-k_0^{-1}}{q-q^{-1}} \quad \mbox{and}\quad  t_i=T_i^\jmath
\cdots T_1^\jmath(t_0), \quad(0<i<n).$$

Let $$\mathbb{A}_1=\left\{\frac{f(q)}{g(q)}~\middle|~f(q),g(q)\in\K[q],
g(1)\neq0\right\}$$ be the localization of $\K[q]$ at $(q-1)$. A left (resp. right)
$\qU_n^\jmath$-module $M$ is called a left (resp. right) highest weight module of highest
weight $(\mathbf{a,b})=(a_1,\ldots,a_{n+1},b_1,\ldots,b_n) \in \Z^{n+1} \times
\mathbb{A}_1^n$ if there exists $v \in M$ such that
\begin{align*}
M&=\qU_n^\jmath v, \quad d_i v=q^{a_{i+1}} v, \quad t_i v=b_{i+1} v, \quad e_i v=0\\
(\mbox{resp.}\quad
M&=v \qU_n^\jmath, \quad v d_i=q^{a_{i+1}} v, \quad v t_i=b_{i+1} v, \quad vf_i=0).
\end{align*}
Its specialization at $q\to1$ is a highest weight $U(\mathfrak{gl}_{n+1} \oplus
\mathfrak{gl}_n)$-module with highest weight
\begin{equation}\label{weightj}
  (a_1, \frac{a_2+b_1|_{q=1}}{2}, \ldots,
\frac{a_{n+1}+b_n|_{q=1}}{2},\frac{a_2-b_1|_{q=1}}{2}, \ldots,
\frac{a_{n+1}-b_n|_{q=1}}{2}).
\end{equation}
There exists a unique simple left (resp. right) highest weight module $L_\mathbf{a,b}^{[n],\jmath}$
(resp. $\widetilde{L}_\mathbf{a,b}^{[n],\jmath}$) of highest weight $(\mathbf{a,b})$ for any $(\mathbf{a,b}) \in \Z^{n+1} \times
\mathbb{A}_1^n$ (see \cite{Wa21}).

Let $$\Par_n^\jmath(d)=\bigsqcup_l \Par_{n+1}(d-l) \times \Par_n(l) \quad
\mbox{and} \quad \Par_n^\jmath=\bigsqcup_d \Par_n^\jmath(d).$$ For
$\lambda=(\lambda^+,\lambda^-) \in \Par_n^\jmath(d)$, denote $L_\lambda^{[n],\jmath}$ (resp.
$\widetilde{L}_\lambda^{[n],\jmath}$) the left (resp. right) irreducible highest weight
$\qU_n^\jmath$-module with highest weight
$$(q^{\lambda_1^+}, q^{\lambda_2^++\lambda_1^-}, \ldots, q^{\lambda_{n+1}^++\lambda_n^-},
[\lambda_2^+-\lambda_1^-], \ldots, [\lambda_{n+1}^+-\lambda_n^-]).$$
It is derived by \eqref{weightj} that at the specialization $q\to1$, $L_\lambda^{[n],\jmath}$ (resp.
$\widetilde{L}_\lambda^{[n],\jmath}$) specializes to the left (resp. right) irreducible $U(\mathfrak{gl}_{n+1} \oplus\mathfrak{gl}_n)$-module with highest weight $\lambda$, which we shall denote by $L_\lambda^{(n+1,n)}$ (resp. $\widetilde{L}_\lambda^{(n+1,n)}$).

\begin{lem}\cite[Theorem~4.3.7]{Wa20}\label{irrj}
If $L$ is a finite-dimensional irreducible $\qU_n^\jmath$-module on which $d_i$'s act
semisimply with eigenvalues in $\{q^a\}_{a \in \Z}$, then $L=L^{[n],\jmath}_\mathbf{a,b}$ for
some $(\mathbf{a,b}) \in \Z^{n+1} \times \mathbb{A}_1^n$ with $b_i=[k_i]$ for some $k_i
\in \Z$.
\end{lem}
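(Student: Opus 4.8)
The plan is to run the classical highest-weight argument inside the $\imath$setting. The structural facts we would rely on are: $(i)$ the $d_j^{\pm 1}$ generate a commutative subalgebra on which $L$ decomposes into weight spaces; $(ii)$ each $e_i$ (resp. $f_i$) is, modulo this Cartan, a weight vector of weight $\beta_i$ (resp. $-\beta_i$), where the $\beta_i$ are the $\imath$simple roots of the restricted root datum and lie in a common open half-space, while the elements $t_i$ (built from $t_0$ via the braid group action $T^\jmath_i$ of \cite{KP11}) are weight $0$ and, together with the $d_j^{\pm 1}$, span a commutative subalgebra $\qU_n^{\jmath,0}$; $(iii)$ a triangular-type decomposition $\qU_n^\jmath=\qU_n^{\jmath,-}\,\qU_n^{\jmath,0}\,\qU_n^{\jmath,+}$ (available since the datum is quasi-split) together with the fact that the commutators $[e_i,f_j]$ lie in $\qU_n^{\jmath,-}\qU_n^{\jmath,0}+\qU_n^{\jmath,0}\qU_n^{\jmath,+}$. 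Granting these, the plan is: locate a maximal-weight space, show it is irreducible over the commutative algebra $\qU_n^{\jmath,0}$, identify it as one-dimensional by reducing the question about the $t_i$-eigenvalues to rank one, and then read off the highest weight.

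First I would fix the $d$-weight decomposition $L=\bigoplus_{\mathbf{a}\in\Z^{n+1}}L_{\mathbf{a}}$ with $d_j|_{L_{\mathbf{a}}}=q^{a_{j+1}}$ and only finitely many $L_{\mathbf{a}}\neq 0$ since $\dim L<\infty$; by $(ii)$, $e_iL_{\mathbf{a}}\subseteq L_{\mathbf{a}+\beta_i}$, $f_iL_{\mathbf{a}}\subseteq L_{\mathbf{a}-\beta_i}$ and $\qU_n^{\jmath,0}L_{\mathbf{a}}\subseteq L_{\mathbf{a}}$. Because the $\beta_i$ lie in an open half-space, a weight $\mathbf{a}^\circ$ of $L$ that is maximal along that direction exists, and then $e_iL_{\mathbf{a}^\circ}=0$ for all $i$, so $L_{\mathbf{a}^\circ}$ is a finite-dimensional module over the commutative algebra $\qU_n^{\jmath,0}$. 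Using the decomposition $(iii)$ I would then show $L_{\mathbf{a}^\circ}$ is $\qU_n^{\jmath,0}$-irreducible: a proper $\qU_n^{\jmath,0}$-submodule $M\subsetneq L_{\mathbf{a}^\circ}$ would generate $\qU_n^{\jmath,-}M$, which one checks is stable under $e_i$ (push $e_i$ through the $f$'s with $(iii)$ and use $e_iL_{\mathbf{a}^\circ}=0$), under $d_j$ (it is a sum of weight spaces) and under $t_i$; hence it is a $\qU_n^\jmath$-submodule of $L$ whose $\mathbf{a}^\circ$-component is $M\subsetneq L_{\mathbf{a}^\circ}$, contradicting irreducibility of $L$. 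Therefore $L_{\mathbf{a}^\circ}\cong\qU_n^{\jmath,0}/\mathfrak{m}$ for a maximal ideal $\mathfrak{m}$, i.e. a finite field extension of $\mathbb{A}$, on which $d_j$ acts by $q^{a^\circ_{j+1}}\in\mathbb{A}$ and $t_i$ by some scalar $b_{i+1}$ in that extension.

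It remains to prove $b_i=[k_i]$ for some $k_i\in\Z$; this both forces the residue field of $\mathfrak{m}$ to be $\mathbb{A}$ itself (so $L_{\mathbf{a}^\circ}=\mathbb{A}v$ for a highest-weight vector $v$, whence $L=\qU_n^\jmath v\cong L_{\mathbf{a}^\circ,\mathbf{b}}^{[n],\jmath}$ by irreducibility and the universal property of highest-weight modules) and yields $\mathbf{b}\in\mathbb{A}_1^n$. For this I would reduce to rank one: for each $i$ the subalgebra $\mathcal U_i\subseteq\qU_n^\jmath$ generated by $e_i,f_i$ and the relevant $d_j^{\pm1}$ is, via the braid relations of \cite{KP11}, a copy of the rank-one $\imath$quantum group $\qU_1^\jmath$ with $t_i$ as its distinguished $\imath$Cartan generator, and $L$ restricts to a finite-dimensional $\mathcal U_i$-module; the representation theory of the rank-one $\imath$quantum group of type AIV (the $\imath\mathfrak{sl}_2$ computation, parallel to the $U_q(\mathfrak{sl}_2)$-case) then forces its $\imath$Cartan to act with eigenvalues in $\{[k]\mid k\in\Z\}$, so $b_{i+1}=[k_{i+1}]$. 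I expect the main obstacle to be precisely this rank-one input — verifying the isomorphism $\mathcal U_i\cong\qU_1^\jmath$ with $t_i$ in the expected role, and carrying out the rank-one classification over the non-algebraically-closed field $\mathbb{A}$; an alternative route is to produce a $\qU_n^\jmath$-stable $\mathbb{A}_1$-lattice of $L$, specialize at $q\to1$ to a finite-dimensional $U(\mathfrak{gl}_{n+1}\oplus\mathfrak{gl}_n)$-module, and invoke integrality and dominance of classical highest weights as in \eqref{weightj}. This is exactly what is packaged in Watanabe's classical weight module theory \cite{Wa21,Wa20}, on which we rely.
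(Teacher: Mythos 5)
The first thing to note is that the paper contains no proof of this statement: Lemma~\ref{irrj} is quoted verbatim from \cite[Theorem~4.3.7]{Wa20}, and only its $\imath$-counterpart (Lemma~\ref{irri}) is proved in the paper, by a route different from yours (locate a common $d$-eigenspace $M$ annihilated by the $e_i$'s, invoke \cite[Lemma~4.5.3]{Wa21} to get that the $t_i$-\emph{actions} commute on $M$, then \cite[Corollary~4.5.6]{Wa21} for the existence of a highest weight vector and the shape of the eigenvalues). So your proposal has to be judged as an independent proof, and as such it has genuine gaps.

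First, your structural inputs $(ii)$--$(iii)$ are precisely the delicate points for a coideal subalgebra, and you assert them without justification. The weight-vector property of $e_i,f_i$ with respect to the $d_j$'s is fine, but the claim that the $d_j^{\pm1}$ together with the braid-constructed $t_i$ span a \emph{commutative} subalgebra $\qU_n^{\jmath,0}$, and that one has a triangular decomposition $\qU_n^\jmath=\qU_n^{\jmath,-}\qU_n^{\jmath,0}\qU_n^{\jmath,+}$ with $[e_i,f_j]$ confined to $\qU_n^{\jmath,-}\qU_n^{\jmath,0}+\qU_n^{\jmath,0}\qU_n^{\jmath,+}$, is not available off the shelf: the $t_i$ need not commute in $\qU_n^\jmath$, and what Watanabe supplies (and what this paper itself uses for the $\imath$ case) is only commutativity of their actions on the subspace killed by the $e_i$'s. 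Your step ``the top weight space is irreducible over the commutative $\qU_n^{\jmath,0}$, hence a finite field extension of $\mathbb{A}$'' therefore does not get started as written. Second, and decisively, the integrality $b_i=[k_i]$ with $k_i\in\Z$ (equivalently, one-dimensionality of the top space and $(\mathbf{a,b})\in\Z^{n+1}\times\mathbb{A}_1^n$) is the real content of the theorem; you name it ``the main obstacle,'' sketch two possible routes (a rank-one reduction to $\qU_1^\jmath$, whose identification inside $\qU_n^\jmath$ with $t_i$ in the distinguished role is itself unproved, or an $\mathbb{A}_1$-lattice/specialization argument), and then close by relying on Watanabe's classical weight module theory --- that is, on the very result being proved. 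As a blind proof this is circular and incomplete; as a reading of the paper it is consistent, since the paper likewise treats the lemma as a citation rather than proving it.
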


\subsection{Classical highest weight module of $\qU_n^\imath$}
Now the automorphisms (braid group action) $T_i^\imath$, $(1 \leq i <n)$, on
$\qU_n^\imath$ are defined as follows:
\begin{align*}
T_i^\imath(e_j)&=
\begin{cases}
-f_i k_i, &\mbox{if} \quad j=i\\
[e_i,e_j]_{-1}, &\mbox{if} \qquad |j-i|=1\\
e_j,&\mbox{otherwise}
\end{cases},
\quad
T_i^\imath(f_j)=
\begin{cases}
-k_i^{-1} e_i, &\mbox{if} \quad j=i\\
[f_j,f_i]_{1}, &\mbox{if} \quad |j-i|=1\\
f_j, &\mbox{otherwise}
\end{cases},
\\
T_i^\imath(t_0)&=
\begin{cases}
[e_1,[t_0,f_1]_1]_{-1}+t_0 k_1, &\mbox{if} \quad i=1\\
t_0, &\mbox{otherwise}
\end{cases},
\qquad
T_i^\imath(d_j)=d_{s_i(j)},
\end{align*}

For $0<i<n$, denote
$$t_i=T_i^\imath \cdots T_1^\imath(t_0).$$
We remark that
\begin{equation}\label{dt=td}
t_id_j=d_jt_i,\quad(\forall 0\leq i\leq n, 1\leq j\leq n)
\end{equation}
because $t_0$ commutes with all $d_j (1\leq j\leq n)$ and $T_i^\imath$ $(1 \leq i <n)$ are automorphisms.

Similar to $\qU_n^\jmath$, we can define a left (resp. right) highest weight $\qU_n^\imath$-module of highest
weight $(\mathbf{a,b}) \in \Z^n \times \mathbb{A}_1^n$, whose specialization at $q\to1$ is a highest weight $U(\mathfrak{gl}_n \oplus \mathfrak{gl}_n)$-module with highest weight
\begin{equation}\label{weighti}
(\frac{a_1+b_1|_{q=1}-|\mathbf{a}|}{2}, \ldots, \frac{a_n+b_n|_{q=1}-|\mathbf{a}|}{2},
\frac{a_1-b_1|_{q=1}+|\mathbf{a}|}{2}, \ldots, \frac{a_n-b_n|_{q=1}+|\mathbf{a}|}{2}),
\end{equation} where $|\mathbf{a}|=\sum_{i=1}^na_i$.
Moreover, there also exists a unique left (resp. right) irreducible highest weight module $L_{\mathbf{a,b}}^{[n],\imath}$  (resp. $\widetilde{L}_\mathbf{a,b}^{[n],\imath}$) for $(\mathbf{a,b})$.

Let $$\Par_n^\imath(d)=\bigsqcup_l \Par_n(d-l) \times \Par_n(l) \quad
\mbox{and} \quad \Par_n^\imath=\bigsqcup_d \Par_n^\imath(d).$$
For $\lambda=(\lambda^+,\lambda^-) \in \Par_n^\imath(d)$, denote $L_\lambda^{[n],\imath}$ (resp.
$\widetilde{L}_\lambda^{[n],\imath}$) the left (resp. right) irreducible highest weight
$\qU_n^\imath$-module of the highest weight
$$(q^{\lambda_1^++\lambda_1^-}, \ldots, q^{\lambda_n^++\lambda_n^-},
[d+\lambda_1^+-\lambda_1^-], \ldots, [d+\lambda_n^+-\lambda_n^-]).$$
At the specialization $q\to1$, it follows from \eqref{weighti} that $L_\lambda^{[n],\imath}$ (resp. $\widetilde{L}_\lambda^{[n],\imath}$) specialize to the left (resp. right) irreducible $U(\mathfrak{gl}_n \oplus \mathfrak{gl}_n)$-module with highest weight $\lambda$, which we shall denote by $L_\lambda^{(n,n)}$ (resp. $\widetilde{L}_\lambda^{(n,n)}$).

Below is a $\qU^\imath$ counterpart of Lemma~\ref{irrj}.
\begin{lem}\label{irri}
If $L$ is a finite-dimensional irreducible $\qU_n^\imath$-module where $d_i$'s act
semisimply with eigenvalues in $\{q^a\}_{a \in \Z}$, and $t_0$ acts semisimply with
eigenvalues in $\{[k]~|~k \in \Z\}$, then $L=L^{[n],\imath}_\mathbf{a,b}$ for some
$(\mathbf{a,b}) \in \Z^n \times \mathbb{A}_1^n$ with $b_i=[k_i]$ for some $k_i \in \Z$.
\end{lem}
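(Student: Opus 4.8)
The plan is to run, \emph{mutatis mutandis}, the argument that proves Lemma~\ref{irrj} in \cite[Theorem~4.3.7]{Wa20}: construct a highest weight vector in $L$ and then invoke the uniqueness of the simple highest weight $\qU_n^\imath$-module of a prescribed highest weight (recalled just before the statement, from \cite{Wa21}). The point of the extra hypothesis on $t_0$ — which has no counterpart in the $\jmath$ case, where $t_0=[e_0,f_0]_1-\frac{k_0-k_0^{-1}}{q-q^{-1}}$ is controlled automatically by the finite-dimensional representation theory of $e_0,f_0$ — is exactly to supply the missing control at the special node of the Satake diagram.

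First I would assemble the structural input about $\qU_n^\imath$. By \eqref{dt=td} together with the braid-group calculus of \cite{KP11}, the elements $d_1^{\pm1},\dots,d_n^{\pm1},t_0,t_1,\dots,t_{n-1}$ span a commutative subalgebra $\mathcal{A}$ (an ``$\imath$Cartan''), and for each $i$ with $0<i<n$ the triple $(e_i,f_i,k_i)$, where $k_i=d_id_{i+1}^{-1}$, generates a copy of $U_q(\mathfrak{sl}_2)$, while $e_1,\dots,e_{n-1}$ satisfy among themselves the Serre relations of type $\mathrm{A}_{n-1}$; hence on the finite-dimensional module $L$ the subalgebra $\langle e_1,\dots,e_{n-1}\rangle$ acts locally nilpotently. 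Next I would verify that $\mathcal{A}$ acts semisimply on $L$: the $d_j$ do by hypothesis and $t_0$ does by hypothesis, while for $0<i<n$ I would combine the fact that the algebra automorphism $\sigma_i=T^\imath_i\cdots T^\imath_1$ sends $t_0$ to $t_i$ and merely permutes the $d_j$ with the finite-dimensional representation theory of the rank-one $\imath$subalgebras to conclude that $t_i$ acts semisimply with eigenvalues in $\{[k]\mid k\in\Z\}$.

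Granting this, I would decompose $L=\bigoplus_\chi L_\chi$ into joint $\mathcal{A}$-eigenspaces. Since $d_je_id_j^{-1}=q^{\pm1}e_i$ or $e_i$ according to $j\in\{i,i+1\}$ or not, each $e_i$ strictly raises the weight for the torus $\langle d_1,\dots,d_n\rangle$ in the dominance order of type $\mathrm{A}_{n-1}$; therefore any nonzero $v$ lying in an $\mathcal{A}$-eigenspace whose $\langle d_j\rangle$-weight is maximal satisfies $e_iv=0$ for all $0<i<n$, $d_jv=q^{a_j}v$ with $\mathbf{a}\in\Z^n$, and $t_iv=b_{i+1}v$ with $\mathbf{b}\in\mathbb{A}_1^n$ and $b_{i+1}=[k_{i+1}]$ for some $k_{i+1}\in\Z$ (the case $i=0$ being the hypothesis, the cases $i\geq1$ coming from the previous step). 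By irreducibility $L=\qU_n^\imath v$, so $L$ is a highest weight module of highest weight $(\mathbf{a},\mathbf{b})$, and the uniqueness statement forces $L\cong L^{[n],\imath}_{\mathbf{a},\mathbf{b}}$.

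The main obstacle is the last sentence of the second paragraph: deducing that every $t_i$ $(1\leq i<n)$ acts semisimply on $L$ with quantum-integer eigenvalues from the hypotheses on the $d_j$ and on $t_0$ only. Naive transport along $\sigma_i$ is circular, since it would require semisimplicity of $t_0$ on the twist $L^{\sigma_i}$, which is precisely what one wants for $t_i$ on $L$; so I expect one must instead work inside suitable rank-one or rank-two $\imath$subalgebras, using the recursion $t_i=T^\imath_i(t_{i-1})$ and tracking how $t_{i-1}$ interacts with node $i$, or else exploit the $q\to1$ specialization: $L$ degenerates to a finite-dimensional highest weight $U(\mathfrak{gl}_n\oplus\mathfrak{gl}_n)$-module, whose highest weight \eqref{weighti} must be dominant integral, after which $L$ is matched against the family $L^{[n],\imath}_\lambda$ $(\lambda\in\Par_n^\imath)$. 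Everything else is either borrowed structure theory or a verbatim repetition of the corresponding steps in the proof of Lemma~\ref{irrj}.
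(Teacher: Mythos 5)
Your plan has a genuine gap at exactly the point you flag, and it also rests on an unjustified structural claim. First, the assertion that $d_1^{\pm1},\dots,d_n^{\pm1},t_0,\dots,t_{n-1}$ span a commutative subalgebra of $\qU_n^\imath$ is not available: relation \eqref{dt=td} only gives $t_id_j=d_jt_i$, and the $t_i$'s do \emph{not} commute with each other in the algebra. What is true (and what the paper uses, via \cite[Lemma~4.5.3]{Wa21}) is that the \emph{actions} of the $t_i$'s commute on the subspace of vectors annihilated by all $e_j$'s. So there is no global joint $\mathcal{A}$-eigenspace decomposition of $L$ to work with. Second, and decisively, your argument needs each $t_i$ $(1\leq i<n)$ to act semisimply on all of $L$ with eigenvalues of the form $[k]$, and you concede that the transport-by-$\sigma_i$ argument for this is circular; the suggested alternatives (rank-one/rank-two analysis, or a $q\to1$ degeneration) are left as hopes rather than proofs, and the specialization route by itself does not produce the highest weight vector over $\K(q)$ that the uniqueness statement requires. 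As it stands, the central step of the proposal is unproved.

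The paper's argument avoids both issues by never asking for semisimplicity of the $t_i$'s on $L$. One takes a maximal $d$-weight space $M$ (since the $e_i$'s raise the $d$-weight, $M$ is killed by all $e_i$, and $M$ is $t_i$-stable because of \eqref{dt=td}); by \cite[Lemma~4.5.3]{Wa21} the $t_i$-operators commute on this finite-dimensional space, and \cite[Corollary~4.5.6]{Wa21} guarantees their eigenvalues there lie in $\mathbb{A}_1$ of the required quantum-integer form, so a common eigenvector $v$ of all $d_j$ and $t_i$ exists in $M$. Irreducibility gives $L=\qU_n^\imath v$, hence $L$ is a highest weight module, and the uniqueness of the simple highest weight module of a given highest weight identifies $L$ with $L^{[n],\imath}_{\mathbf{a,b}}$ with $b_i=[k_i]$. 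If you want to salvage your write-up, replace the global ``$\imath$Cartan'' and semisimplicity claims by this localization to the singular subspace together with the two cited results of Watanabe; the remaining steps of your third paragraph then go through essentially as in the $\jmath$ case.
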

\begin{proof}
There must exists a common eigenvector subspace $M$ of $d_i, (1\leq i\leq n)$, which is killed by all $e_i, (1\leq i\leq n)$. This subspace $M$ is invariant under the action of $t_i$ by \eqref{dt=td}. It follows from \cite[Lemma~4.5.3]{Wa21} that all $t_i$-actions $(0\leq i\leq n)$ on $M$ commute. Thus we can find a singular vector in $M$, which is also a common eigenvector of $t_i, d_j, (0\leq i\leq n, 0<j\leq n)$. Here the existence of the highest weight vector $v$ depends on whether $t_i$ has eigenvalues in $\mathbb{A}_1$,  which is certified by \cite[Corollary~4.5.6]{Wa21}. Moreover, \cite[Corollary~4.5.6]{Wa21} implies that $L$ must be a irreducible highest weight $\qU_n^\imath$-module in the form described in the lemma.
\end{proof}

\subsection{Multiplicity-free decomposition} We shall give the multiplicity-free decomposition of the $(\qU_m^\mathfrak{b},\qU_n^\mathfrak{c})$-module ${}_\mathbb{A}\!\T_{m|n,d}^{\mathfrak{bc}}$ in this subsection.

\begin{lem}\label{eigen}
The left (resp. right) $t_0$-action on $\mathcal{V}_{m|n,d}^{\mathfrak{b}\imath}$ (resp. $\mathcal{V}_{m|n,d}^{\imath\mathfrak{b}}$), $(\mathfrak{b}=\imath,\jmath)$, is semisimple with eigenvalues in $\{[k+1] ~|~ -d \leq k \leq d\}$.
\end{lem}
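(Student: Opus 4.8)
The plan is to use the explicit formula for the left $t_0$-action on the basis $\{\langle A\rangle : A\in\Xi_{m|n,d}^{\mathfrak{b}\imath}\}$ given in the Corollary following Proposition~\ref{Uj2}, and to reduce the statement to a rank-one computation. First observe from that formula that $t_0$ preserves $\mathcal{V}_{m|n,d}^{\mathfrak{b}\imath}$ (the degree $d$ is unchanged), and that the action only reads off or modifies the entries of $A$ in columns $0$ and $\pm 1$, via the matrices $E_{j,-1}^\theta-E_{j1}^\theta$. Call $A,A'\in\Xi_{m|n,d}^{\mathfrak{b}\imath}$ equivalent if they agree outside columns $\pm 1$ and satisfy $a_{j1}+a_{j,-1}=a'_{j1}+a'_{j,-1}$ for every row index $j$; since $E_{j,-1}^\theta-E_{j1}^\theta$ fixes all of these quantities, $t_0$ preserves each equivalence class, giving $\mathcal{V}_{m|n,d}^{\mathfrak{b}\imath}=\bigoplus_\alpha \mathcal{V}_\alpha$ with each $\mathcal{V}_\alpha$ finite-dimensional and $t_0$-stable. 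Fixing a class $\alpha$, set $p_j=a_{j1}+a_{j,-1}$ (so $p_{-j}=p_j$ and $p_0=2a_{01}$ is even); the basis of $\mathcal{V}_\alpha$ is then indexed by the box $\prod_{j>0}\{0,1,\ldots,p_j\}$ through the value of $(a_{j1})_{j>0}$, and the three families of terms in the $t_0$-formula are respectively preserving, raising by $1$, and lowering by $1$ the grading $g(A)=\sum_{j>0}a_{j,-1}$, so $t_0|_{\mathcal{V}_\alpha}$ is tridiagonal with respect to $g$.

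The next step is to identify $(\mathcal{V}_\alpha,t_0)$, after rescaling each $\langle A\rangle$ by an explicit monomial in $q$ chosen to symmetrize the $q$-exponents, with the action of the rank-one $\imath$quantum group generator $t_0\in\qU_1^\imath\subset\qU_2$ on the tensor product $\bigotimes_{j>0}V(p_j)$ of simple $U_q(\mathfrak{sl}_2)$-modules, twisted (when $p_0>0$) by an overall shift governed by $a_{01}$; under this identification the grading $g$ becomes the $\mathfrak{sl}_2$-weight grading and the raising/lowering parts of $t_0$ become those of the coideal generator. Decomposing $\bigotimes_{j>0}V(p_j)\cong\bigoplus_l V(l)^{\oplus m_l}$, the classical representation theory of the rank-one $\imath$quantum group — equivalently, Lemma~\ref{irri} applied to $\qU_1^\imath$, or a direct computation with the $\imath$-canonical basis — gives that $t_0$ acts semisimply on $V(l)$ with eigenvalues $[l+1],[l-1],\ldots,[1-l]$, hence, after the $p_0$-twist, with eigenvalues $\{[l+p_0/2+1-2i]:0\le i\le l\}$. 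Therefore $t_0$ is semisimple on $\mathcal{V}_\alpha$, and each eigenvalue has the form $[k+1]$ with $|k|\le l_{\max}+p_0/2$, where $l_{\max}\le\sum_{j>0}p_j$.

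It remains to bound the eigenvalue range. Since $\sum_{i,j}a_{ij}=D=2d+1$, $a_{00}=1$, and columns $1$ and $-1$ have equal sums, we get $p_0+2\sum_{j>0}p_j\le 2d$, so $l_{\max}+p_0/2\le d$; thus every eigenvalue of $t_0$ on each $\mathcal{V}_\alpha$, and hence on $\mathcal{V}_{m|n,d}^{\mathfrak{b}\imath}$, lies in $\{[k+1]:-d\le k\le d\}$. The statement for the right $t_0$-action on $\mathcal{V}_{m|n,d}^{\imath\mathfrak{b}}$ then follows from the transpose isomorphism $\langle A\rangle\mapsto[A']$ of Theorem~\ref{equivalentj} together with the right-action formulas in the same Corollary, or by repeating the argument with rows in place of columns. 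I expect the main obstacle to be the identification in the second paragraph: matching the asymmetric $q$-powers of the combinatorial formula to the clean tensor-product model requires pinning down the correct rescaling of the $\langle A\rangle$ and carefully handling the $p_0$-twist (a routine but delicate bookkeeping). An alternative that bypasses the $\mathfrak{sl}_2$-module language is to diagonalize the tridiagonal matrices directly, in which case the obstacle becomes an inductive verification that their characteristic polynomials factor as $\prod([k+1]-x)$ over the predicted range.
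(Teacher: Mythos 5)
Your overall strategy (block-decompose $\mathcal{V}_{m|n,d}^{\mathfrak{b}\imath}$ using the explicit formula for $t_0\cdot\langle A\rangle$, identify each block with a rank-one coideal module, and bound the spectrum combinatorially) is plausible, and in the small cases I checked the claimed block spectrum $\{[l+p_0/2+1-2i]\}$ does come out right. But as written the argument has a genuine gap exactly where you place ``the main obstacle'': the identification of a block $\mathcal{V}_\alpha$ with $\bigotimes_{j>0}V(p_j)$ ``twisted by an overall shift governed by $a_{01}$'' is asserted, not proved, and it carries the entire weight of the lemma --- both the semisimplicity (which you cannot get from distinctness of eigenvalues, since different constituents $V(l)$ produce repeated eigenvalues) and the eigenvalue range. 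Moreover the ``$p_0$-twist'' is not a routine rescaling: no rescaling of the basis vectors $\langle A\rangle$ by powers of $q$ can shift the argument inside a quantum integer ($[l+1-2i]\mapsto[l+p_0/2+1-2i]$ is not multiplication by a scalar), so the frozen row-$0$ entries must be built into the module being compared with, and that requires an actual construction/verification, not bookkeeping. Finally, the rank-one input is miscited: Lemma~\ref{irri} classifies irreducible modules \emph{assuming} $t_0$ acts semisimply with quantum-integer eigenvalues; it does not tell you that $t_0$ acts semisimply on $V(l)$ with spectrum $[l+1],[l-1],\ldots,[1-l]$. That fact is true and provable by a direct computation, but it has to be supplied. (Your reduction of the right-action case is fine if you ``repeat the argument with rows in place of columns''; Theorem~\ref{equivalentj} alone does not convert a right action into a left one.)

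It is worth comparing with the paper's proof, which bypasses all of this: since $\mathcal{V}_{m|n,d}^{\mathfrak{b}\imath}$ is a quotient of $(\mathbb{A}^{2n}\otimes\mathbb{A}^{2m})^{\otimes d}$ (resp.\ of the analogous tensor power in the $\jmath$ cases) as a left $\qU_n^\imath$-module, semisimplicity and the eigenvalue bound follow from the same statement on $(\mathbb{A}^{2n})^{\otimes d}$, where one computes the $d=1$ case by hand ($t_0=E_0+qF_0K_0^{-1}+K_0^{-1}$ has eigenvalues in $\{[2],1,0\}$) and then inducts on $d$ using $\Delta(t_0)=t_0\otimes K_0^{-1}+1\otimes(E_0+qF_0K_0^{-1})$. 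If you want to keep your block-by-block picture, the cleanest way to close your gap is to realize each block inside such a tensor power (the rows $j>0$ contributing $V(1)^{\otimes p_j}$-type factors and the frozen row-$0$ entries contributing the factors responsible for the shift by $p_0/2$) and deduce the rank-one spectrum there, rather than matching $q$-exponents term by term against the combinatorial formula.
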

\begin{proof}
	We just verify the case of left $t_0$-action on $\mathcal{V}_{m|n,d}^{\imath\imath}$ since the others are similar. Noting that $\mathcal{V}_{m|n,d}^{\imath\imath}$ is a quotient of $(\mathbb{A}^{2n} \otimes \mathbb{A}^{2m})^{\otimes d}$ as left $\qU_n^\imath$-modules, it is enough to calculate the eigenvalues of $t_0$ acting on $(\mathbb{A}^{2n})^{\otimes d}$.

When $d=1$, the eigenvalues lie in $\{[2],1,0\}$ via a straightforward computation by $t_0=E_0+qF_0 K_0^{-1}+K_0^{-1}\in\qU_{2n}$ and the natural $\qU_{2n}$-module structure explained in \eqref{naturalmoduleA} (notice that we take a shift on the index set when replace $n$ by $2n$).
Then using the comultiplication $\Delta(t_0)=t_0 \otimes K_0^{-1}+ 1 \otimes (E_0+qF_0 K_0^{-1})$, we can prove that the left $t_0$-action on $(\mathbb{A}^{2n})^{\otimes d}$ is semisimple with eigenvalues lying in $\{[k+1] ~|~ -d \leq k \leq d\}$ by induction on $d$.
\end{proof}

\begin{thm}\label{decomp}
	The $(\qU_m^\mathfrak{b},\qU_n^\mathfrak{c})$-module ${}_\mathbb{A}\!\T_{m|n,d}^{\mathfrak{bc}}\cong\mathcal{V}_{n|m,d}^\mathfrak{cb}$ $(\mathfrak{b,c}\in\{\imath,\jmath\})$ admits the following multiplicity-free decomposition:
	\begin{align*}
	{}_\mathbb{A}\!\T_{m|n,d}^{\mathfrak{bc}} &\cong\bigoplus_{\lambda \in \Par_m^\mathfrak{b}(d)\cap\Par_n^\mathfrak{c}(d)} L_{\lambda}^{[m],\mathfrak{b}} \otimes \widetilde{L}_\lambda^{[n],\mathfrak{c}}\\
&\cong
	\begin{cases}
	\bigoplus_{\lambda \in \Par_m^\mathfrak{b}(d)} L_{\lambda}^{[m],\mathfrak{b}} \otimes \widetilde{L}_\lambda^{[n],\mathfrak{c}}, &\mbox{if} \quad m < n,\\
	\bigoplus_{\lambda \in \Par_n^\mathfrak{c}(d)} L_{\lambda}^{[m],\mathfrak{b}} \otimes \widetilde{L}_\lambda^{[n],\mathfrak{c}}, &\mbox{if} \quad m> n,\\
	\bigoplus_{\lambda \in \Par_n^\mathfrak{\jmath}(d)} L_{\lambda}^{[m],\mathfrak{b}} \otimes \widetilde{L}_\lambda^{[n],\mathfrak{c}}, &\mbox{if} \quad m=n, \ \mathfrak{(b,c)}=(\jmath,\jmath),\\
\bigoplus_{\lambda \in \Par_n^\mathfrak{\imath}(d)} L_{\lambda}^{[m],\mathfrak{b}} \otimes \widetilde{L}_\lambda^{[n],\mathfrak{c}}, &\mbox{if} \quad m=n, \ \mathfrak{(b,c)}\neq(\jmath,\jmath).
	\end{cases}
	\end{align*}
\end{thm}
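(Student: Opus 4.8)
The plan is to combine the double centralizer property of Theorem~\ref{doubleAIV} with the classification of classical weight modules (Lemmas~\ref{irrj} and~\ref{irri}) and an explicit analysis of highest weight vectors in the coordinate-coalgebra model $\mathcal{V}_{n|m,d}^{\mathfrak{cb}}\cong{}_\mathbb{A}\!\T_{m|n,d}^{\mathfrak{bc}}$ furnished by Theorem~\ref{equivalentj}. First, since the Hecke algebra of type $\mathrm{B}_d$ is semisimple over $\mathbb{A}=\K(q)$, the $\imath$Schur algebras ${}_\mathbb{A}\!\Sc_{m,d}^{\mathfrak{b}}$ and ${}_\mathbb{A}\!\Sc_{n,d}^{\mathfrak{c}}$ are semisimple (as in the proof of Theorem~\ref{duality}), so Theorem~\ref{doubleAIV} yields a multiplicity-free decomposition ${}_\mathbb{A}\!\T_{m|n,d}^{\mathfrak{bc}}\cong\bigoplus_i L_i\otimes\widetilde{L}_i$, where the $L_i$ (resp.\ $\widetilde{L}_i$) are the pairwise non-isomorphic simple $\qU_m^{\mathfrak{b}}$-modules (resp.\ $\qU_n^{\mathfrak{c}}$-modules) occurring in ${}_\mathbb{A}\!\T_{m|n,d}^{\mathfrak{bc}}$, and these all factor through $\kappa_{m,d}^{\mathfrak{b}}$ (resp.\ $\kappa_{n,d}^{\mathfrak{c}}$).

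Next I would identify each constituent as a classical highest weight module. Each $L_i$ is finite dimensional; by the formulas for $\kappa_{m,d}^{\mathfrak{b}}(d_j)$ in~\eqref{homo:kj} and~\eqref{kappai} the operators $d_j$ act semisimply with eigenvalues in $\{q^a\}_{a\in\Z_{\geq 0}}$, and by Lemma~\ref{eigen} the operator $t_0$ (when $\mathfrak{b}=\imath$) acts semisimply with eigenvalues in $\{[k+1]\mid -d\leq k\leq d\}$. Hence Lemma~\ref{irrj} (if $\mathfrak{b}=\jmath$) or Lemma~\ref{irri} (if $\mathfrak{b}=\imath$) applies, giving $L_i\cong L_{\mathbf{a},\mathbf{b}}^{[m],\mathfrak{b}}$ for a suitable pair $(\mathbf{a},\mathbf{b})$ with the $b$-entries of the form $[k]$, $k\in\Z$; symmetrically $\widetilde{L}_i\cong\widetilde{L}_{\mathbf{a}',\mathbf{b}'}^{[n],\mathfrak{c}}$.

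Then I would pin down the highest weights and the index set by working inside $\mathcal{V}_{n|m,d}^{\mathfrak{cb}}$ with its monomial basis $\{\langle A\rangle\}$ and the explicit action formulas of the Corollary following Proposition~\ref{Uj2}. A basis vector $\langle A\rangle$ is annihilated by all $e_i\in\qU_n^{\mathfrak{c}}$ (and, on the other side, by all $f_i\in\qU_m^{\mathfrak{b}}$) precisely when $A$ has a ``staircase'' shape determined by a pair of partitions $\lambda=(\lambda^+,\lambda^-)$; the number of admissible nonzero rows and columns of $A$ is bounded on one side by the $m$-step isotropic flag and on the other by the $n$-step one, which translates into exactly $\lambda\in\Par_m^{\mathfrak{b}}(d)\cap\Par_n^{\mathfrak{c}}(d)$. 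Reading off the $d_j$- and $t_0$-eigenvalues of such an $\langle A\rangle$ from the same formulas matches the highest weight recorded before Lemma~\ref{irrj} (resp.\ before Lemma~\ref{irri}), identifying the corresponding constituent as $L_\lambda^{[m],\mathfrak{b}}\otimes\widetilde{L}_\lambda^{[n],\mathfrak{c}}$. That every such $\lambda$ actually occurs — equivalently, that the two sides have equal dimension — I would confirm by specialising $q\to 1$: the asserted identity becomes the classical multiplicity-free Howe decomposition for the commuting pair $(U(\mathfrak{gl}_{m+1}\oplus\mathfrak{gl}_m),U(\mathfrak{gl}_{n+1}\oplus\mathfrak{gl}_n))$ (and its $\imath$-variants $U(\mathfrak{gl}_m\oplus\mathfrak{gl}_m)$, etc.) on the corresponding classical coordinate space, whose index set is exactly the stated intersection. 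Finally, the case-by-case formulas are immediate: since $\Par_k(\ell)\subseteq\Par_{k+1}(\ell)$, one of $\Par_m^{\mathfrak{b}}(d)$, $\Par_n^{\mathfrak{c}}(d)$ contains the other when $m\neq n$, while for $m=n$ the presence of an $\imath$ on either side forces the smaller set $\Par_n^{\imath}(d)$.

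The main obstacle is the point just discussed in the third paragraph: showing that the constituents produced by the explicit highest weight vectors \emph{exhaust} ${}_\mathbb{A}\!\T_{m|n,d}^{\mathfrak{bc}}$, i.e.\ that no admissible $\lambda$ is missing and that the parametrization is by the full intersection rather than by a smaller set. This requires either a careful enumeration of the staircase matrices $A$ together with a verification that each $\langle A\rangle$ is nonzero and annihilated by the relevant operators, or a control of the $q\to 1$ specialisation (flatness and a dimension count) combined with the classical symmetric-space Howe duality and Watanabe's weight-module theory~\cite{Wa21}.
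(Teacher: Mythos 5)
Your first two steps coincide with the paper's proof: semisimplicity over $\K(q)$ plus Theorem~\ref{doubleAIV} gives a multiplicity-free decomposition, and Lemma~\ref{irrj} (resp.\ Lemmas~\ref{irri} and~\ref{eigen}) forces every constituent to be a classical highest weight module $L^{[m],\mathfrak{b}}_{\mathbf{a},\mathbf{b}}$, $\widetilde{L}^{[n],\mathfrak{c}}_{\mathbf{a}',\mathbf{b}'}$. The gap is in your third step, the route you yourself flag as the main obstacle. The claim that a \emph{basis} vector $\langle A\rangle$ is annihilated by all $e_i\in\qU_n^{\mathfrak{c}}$ (and by the $f_i$ on the other side) precisely when $A$ is a ``staircase'' matrix indexed by a pair of partitions is false: by the formulas in the corollary to Proposition~\ref{Uj2}, $e_i\cdot\langle A\rangle$ is a sum over distinct basis vectors with nonzero quantum-integer coefficients, so annihilation by all $e_i$ forces every column of $A$ other than the $0$-th (resp.\ the $\pm1$-st in the $\imath$ case) to vanish. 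These monomial singular vectors account only for degenerate $\lambda$; the genuine joint highest weight vectors are linear combinations of monomials (quantum-minor type expressions), exactly as in the classical $\mathrm{GL}\times\mathrm{GL}$ Howe duality where they are products of principal minors. So the enumeration of staircase matrices cannot produce, let alone exhaust, the index set $\Par_m^{\mathfrak{b}}(d)\cap\Par_n^{\mathfrak{c}}(d)$, and reading off weights from such $\langle A\rangle$ does not identify the constituents.

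Your fallback --- specialise at $q\to1$ and invoke a classical multiplicity-free decomposition --- is in fact the paper's argument, but as stated it is not yet a proof. The classical statement needed is not the usual coordinate-ring Howe duality for $(\mathfrak{gl}_{m+1}\oplus\mathfrak{gl}_m,\mathfrak{gl}_{n+1}\oplus\mathfrak{gl}_n)$: at $q=1$ the Fock space becomes the space of diagonal $W_{B_d}$-invariants $((\mathbb{K}^{M}\otimes\mathbb{K}^{2n})^{\otimes d})^{W_{B_d}}$, and the paper decomposes it by tensoring the two classical Schur--Weyl dualities of type $B_d$ (for $(\mathfrak{gl}_{m+1}\oplus\mathfrak{gl}_m, W_{B_d})$ on $(\mathbb{K}^{M})^{\otimes d}$ and $(W_{B_d},\mathfrak{gl}_n\oplus\mathfrak{gl}_n)$ on $(\mathbb{K}^{2n})^{\otimes d}$, after \cite{Hu01}) and extracting $W_{B_d}$-invariants, which is precisely what yields the intersection $\Par_m^{\mathfrak{b}}(d)\cap\Par_n^{\mathfrak{c}}(d)$ as index set; the case-by-case simplification then follows as you say. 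Without carrying out this invariant-theoretic computation (or an equivalent dimension/character comparison justifying that the generic-$q$ decomposition matches the $q=1$ one), the identification of the index set remains unestablished.
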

\begin{proof}
	The double centralizer property shown in Theorem~\ref{doubleAIV} implies that the $(\qU_m^\mathfrak{b},\qU_n^\mathfrak{c})$-module ${}_\mathbb{A}\!\T_{m|n,d}^{\mathfrak{bc}}\cong\mathcal{V}_{n|m,d}^\mathfrak{cb}$ has a multiplicity-free decomposition. By
Lemma~\ref{irrj} for $\jmath$ type or Lemmas~\ref{irri} \& \ref{eigen} for $\imath$ type, we know that each irreducible left $\qU_m^\mathfrak{b}$-module (resp. right $\qU_n^\mathfrak{c}$-module)
occurring in the decomposition must be in the form of $L_\lambda^{[m],\mathfrak{b}}$ (resp.
$\widetilde{L}_\lambda^{[n],\mathfrak{c}}$).
Thus the desired decomposition follows by the same multiplicity-free decomposition claim at the specialization $q\to1$, in which case $\qU_m^\jmath$ (resp. $\qU_n^\jmath$, $\qU_m^\imath$ and $\qU_n^\imath$) specializes to $U(\mathfrak{gl}_{m+1}\oplus \mathfrak{gl}_m)$ (resp. $U(\mathfrak{gl}_{n+1}\oplus \mathfrak{gl}_n)$, $U(\mathfrak{gl}_{m}\oplus \mathfrak{gl}_m)$ and $U(\mathfrak{gl}_{n}\oplus \mathfrak{gl}_n)$).

We take $(\mathfrak{b,c})=(\jmath,\imath)$ as a sample. At the specialization $q\to1$, the Fock space ${}_\mathbb{A}\!\T_{m|n,d}^{\jmath\imath}$ specializes to $((\mathbb{K}^M\otimes \mathbb{K}^{2n})^{\otimes d})^{W_{B_d}}$ as a $(\mathfrak{gl}_{m+1}\oplus \mathfrak{gl}_m, \mathfrak{gl}_{n}\oplus \mathfrak{gl}_n))$-module. It is known that as a $(\mathfrak{gl}_{m+1}\oplus \mathfrak{gl}_m, W_{B_d})$-module,
$$(\mathbb{K}^M)^d=\bigoplus_{\lambda\in\Par_m^\jmath(d)}L_\lambda^{(m+1,m)}\otimes\widetilde{ S}^\lambda,$$
and as a $(W_{B_d},\mathfrak{gl}_{n}\oplus \mathfrak{gl}_n)$-module,
$$(\mathbb{K}^{2n})^d=\bigoplus_{\lambda\in\Par_n^\imath(d)}S^\lambda \otimes \widetilde{L}_\lambda^{(n,n)},$$
where $S^\lambda$ and $\widetilde{S}^\lambda$ are the left and right irreducible $W_{B_d}$-modules corresponding to $\lambda$, respectively.
The above two formulas are very special examples of Schur dualities obtained in \cite{Hu01}.
Therefore, we have
\begin{align*}
  ((\mathbb{K}^M\otimes \mathbb{K}^{2n})^{\otimes d})^{W_{B_d}}&\cong((\mathbb{K}^M)^d\otimes (\mathbb{K}^{2n})^d)^{{W_{B_d}}} \qquad \mbox{(Here and below $W_{B_d}$ acts diagonally)}\\
  &\cong\bigoplus_{\stackrel{\lambda\in\Par_m^\jmath(d),}{\mu\in\Par_n^\imath(d)}}L_{\lambda}^{(m+1,m)}
  \otimes \widetilde{L}_{\mu}^{(n,n)}\otimes (\widetilde{S}^{\lambda}\otimes S^\mu)^{W_{B_d}}\\
  &\cong\bigoplus_{\lambda\in\Par_m^\jmath(d)\cap\Par_n^\imath(d)}L_{\lambda}^{(m+1,m)}
  \otimes \widetilde{L}_{\lambda}^{(n,n)}.
\end{align*} Other cases can be derived by the same argument. As we mentioned before, our desired multiplicity-free decompositions follow from these non-quantized ones.
\end{proof}



\begin{thebibliography}\frenchspacing
	
\bibitem[Ba07]{Ba07}
P.~Baumann, {\em The $q$-Weyl group of a $q$-Schur algebra}, hal-00143359.

\bibitem[BKLW18]{BKLW18}
H.~Bao, J.~Kujawa, Y.~Li, and W.~Wang, {\em Geometric Schur duality of classical type},
Transform. Groups {\bf 23}(2018), 329--389.	
	
\bibitem[BLM90]{BLM90}
A.~Beilinson, G.~Lusztig and R.~MacPherson,
{\em A geometric setting for the quantum deformation of $\mathrm{GL}_n$}, Duke Math. J.
{\bf 61} (1990), 655--677.
	
\bibitem[Bo68]{Bo68}
N.~Bourbaki, {\em Lie groups and Lie algebras. Chapters 4--6}. Translated from the 1968 French original by Andrew Pressley. Elements of Mathematics (Berlin). Springer-Verlag, Berlin, 2002, xii+300 pp.

\bibitem[BW18]{BW18}
H.~Bao and W.~Wang, {\em A new approach to Kazhdan-Lusztig theory of type B via quantum
symmetric pairs}, Asterisque {\bf 402} (2018), vii+134 pp.

\bibitem[CW20]{CW20} Z. Chang and Y. Wang, {\em Howe duality for quantum queer
    superalgebras}, J. Algebra {\bf 547} (2020), 358--378.

\bibitem[DJ89]{DJ89} R.~Dipper and G.~James, {\em The $q$-Schur algebra}, Proc. London
    Math. Soc. {\bf 59} (1989), 23--50.

23--46.
		

\bibitem[ES18]{ES18}
M.~Ehrig and C.~Stroppel,
{\em Nazarov--Wenzl algebras, coideal subalgebras and categorified skew Howe duality},
Adv. Math. {\bf 331} (2018), 58--131.

\bibitem[FKZ19]{FKZ19} V. Futorny, L. Krizka and J. Zhang, {\em Quantum Howe duality and
    invariant polynomials}, J. Algebra {\bf 530} (2019), 326--367.	

\bibitem[FL15]{FL15}
Z.~Fan and Y.~Li,
{\em Geometric Schur duality of classical type, II}, Trans. Amer. Math. Soc., Series B
{\bf 2} (2015), 51--92.

\bibitem[FLLLW20]{FLLLW20}
Z.~Fan, C.~Lai, Y.~Li, L.~Luo and W.~Wang,
{\em  Affine flag varieties and quantum symmetric pairs}, Mem. Amer. Math. Soc. {\bf 265}
(2020), no. 1285, v+123 pp.
	
\bibitem[GL92]{GL92}
I. Grojnowski and G. Lusztig,
{\em On bases of irreducible representations of quantum $\mathrm{GL}_n$},
In: {\em Kazhdan-Lusztig theory and related topics}
(Chicago, IL, 1989), 167-174, Contemp. Math. {\bf 139}, Amer. Math. Soc., Providence, RI,
1992.
	
\bibitem[Hu01]{Hu01}
J.~Hu, {\em Schur-Weyl reciprocity between quantum groups and Hecke algebras of type $G(r,1,n)$}, Math. Z. {\bf 238} (2001), 505--521.

\bibitem[Ho89]{Ho89}
R.~Howe, {\em Remarks on classical invariant theory}, Trans. Amer. Math. Soc. {\bf 313}
(1989), 539--570.

\bibitem[IM65]{IM65} N. Iwahori and H. Matsumoto, {\em On some Bruhat decomposition and
    the structure of the Hecke rings of $p$-adic Chevalley groups}, Publ. Math. IHES {\bf
    25} (1965), 5--48.

\bibitem[Jim86]{Jim86}
M. Jimbo,
{\em A $q$-analogue of $U(\mathfrak{gl}(N+1))$, Hecke algebra, and the Yang-Baxter
equation}, Lett. Math. Phys. {\bf 11}
(1986), 247--252.

\bibitem[KP11]{KP11}
S.~Kolb and J.~Pellegrini, {\em Braid group actions on coideal subalgebras of quantized
enveloping algebras}, J. Algebra {\bf 336} (2011), 395--416.

\bibitem[Le99]{Le99}
G. Letzter, {\em Symmetric pairs for quantized enveloping algebras}, J. Algebra {\bf 220}
(1999), 729--767.

\bibitem[LNX22]{LNX22}
C.~Lai, D.~Nakano and Z.~Xiang, {\em $q$-Schur algebras corresponding to Hecke algebras
of type B}, Transform. Groups {\bf 27} (2022), 983--1024.

\bibitem[LL21]{LL21}
C.~Lai and L.~Luo,
{\em Schur algebras and quantum symmetic pairs with unequal parameters}, Int. Math. Res.
Not. (2021), no. 13, 10207--10259.

\bibitem[LW22]{LW22}
L.~Luo and W.~Wang,
{\em The q-Schur algebras and q-Schur dualities of finite type}, J. Inst. Math. Jussieu {\bf21} (2022), 129--160.

\bibitem[Lu99]{Lu99}
G.~Lusztig,
{\em Aperiodicity in quantum affine $\mathfrak{gl}_n$}, Asian J. Math. {\bf 3} (1999),
147--177.
	
\bibitem[LZ03]{LZ03}
K.~Lai and R.B.~Zhang, {\em Multiplicity free actions of quantum groups and generalized
Howe duality}, Lett. Math. Phys. {\bf 64} (2003), 255--272.

\bibitem[LZZ11]{LZZ11}
G.~Lehrer, H.~Zhang and R.B.~Zhang, {\em A quantum analogue of the first fundamental
theorem of classical invariant theory}, Comm. Math. Phys. {\bf 301} (2011), 131--174.

\bibitem[NUW95]{NUW95} M. Noumi, T. Umeda and M. Wakayama, {\em A quantum dual pair
    $(\mathfrak{sl}_2,\mathfrak{o}_n)$ and the associated Capelli identity}, Lett. Math.
    Phys. {\bf 34} (1995), 1--8.

\bibitem[NUW96]{NUW96} M. Noumi, T. Umeda and M. Wakayama, {\em Dual pairs, spherical
    harmonics and a Capelli identity in quantum group theory}, Compositio Math. {\bf 104}
    (1996), 227--277.

\bibitem[Q92]{Q92} C. Quesne, {\em Complementarity of $su_q(3)$ and $u_q(2)$ and
    $q$-boson realization of the $su_q(3)$ irreducible representations}, J. Phys. A {\bf
    25} (1992), 5977--5998.

\bibitem[ST19]{ST19} A. Sartori and D. Tubbenhauer, {\em Webs and $q$-Howe dualities in
    types BCD}, Thans. Amer. Math. Soc. {\bf 371} (2019), 7387--7431.

\bibitem[W01]{W01}
W.~Wang,
{\em Lagrangian constuction of the $(\mathfrak{gl}_n,\mathfrak{gl}_m)$-duality}, Commun.
Contemp. Math. {\bf 3} (2001), 201--214.

\bibitem[Wa20]{Wa20}
H.~Watanabe,
{\em Crystal basis theory for a quantum symmetric pair $(\qU, \qU^\jmath)$}, Int. Math.
Res. Not. (2020), 8292--8352.
	
\bibitem[Wa21]{Wa21}
H.~Watanabe,
{\em Classical weight modules over $\imath$quantum groups}, J. Algebra 578 (2021),
241--302.

\bibitem[WZ09]{WZ09}
Y.~Wu and R.B.~Zhang, {\em Unitary highest weight representations of quantum general linear
superalgebra}, J. Algebra {\bf 321} (2009), 3568--3593.

\bibitem[Z02]{Z02}
R.B.~Zhang,
{\em Howe duality and the quantum general linear group}, Proc. Amer. Math. Soc. {\bf 131}
(2002), 2681--2692.

\bibitem[Zh20]{Zh20}
Y.~Zhang, {\em The first and second fundamental theorems of invariant theory for the
quantum general linear supergroup}, J. Pure Appl. Algebra {\bf 224} (2020), 106411, 50pp.
\end{thebibliography}
\end{document}